\theoremstyle{definition}
\newtheorem{thm}{Theorem}[section]
\newtheorem*{thm*}{Theorem}
\newtheorem{lemma}[thm]{Lemma}
\newtheorem{defn}[thm]{Definition}
\newtheorem{prob}[thm]{Problem}
\newtheorem{claim}[thm]{Claim}
\newtheorem{prop}[thm]{Proposition}
\newtheorem{cor}[thm]{Corollary}
\newtheorem{remark}[thm]{Remark}
\newtheorem{fact}[thm]{Fact}
\newtheorem{question}[thm]{Question}
\newtheorem{Notation}[thm]{Notation}
\newtheorem{obs}[thm]{Observation}
\newtheorem{conj}[thm]{Conjecture}
\renewcommand{\subset}{\subseteq}
\newcommand\force{\Vdash}
\newcommand\R{\mathbb{R}}
\newcommand\N{\mathbb{N}}
\newcommand\Q{\mathbb{Q}}
\renewcommand\P{\mathbb{P}}
\newcommand{\set}[2]{ \left\{ #1 :\, #2 \right\} }
\newcommand{\seqq}[2]{ \left( #1 :\, #2\right) }
\newcommand{\seq}[1]{ \left( #1 \right) }
\title[The first $\omega$ Friedman-Stanley jumps]{
The finite Friedman-Stanley jumps:\\ generic dichotomies for Borel homomorphisms}
\author{Assaf Shani}
\address{Department of Mathematics and Statistics, Concordia University University, Montreal, QC  H3G 1M8, Canada}
\email{assaf.shani@concordia.ca}
\urladdr{https://sites.google.com/view/assaf-shani/}
\date{\today}
\keywords{Borel reducibility, Analytic equivalence relations,
Borel homomorphisms, Friedman-Stanley jump, Hereditarily countable sets.}
\subjclass[2010]{Primary: 03E15, 03E75, 54H05, 03E25, 03E47.}
\thanks{Research partially supported by NSF grant DMS-2246746.}
\begin{document}

\begin{abstract}
Fix $n=1,2,3,\dots$ or $n=\omega$. We prove a dichotomy for Borel homomorphisms from the $n$-th Friedman-Stanley jump $=^{+n}$ to an equivalence relation $E$ which is classifiable by countable structures: if there is no reduction from $=^{+n}$ to $E$, then in fact all Borel homomorphisms are very far from a reduction.
For this we use a different presentation of $=^{+n}$, equivalent up to Borel bi-reducibility, which is susceptible to Baire-category techniques. 

This dichotomy is seen as a method for proving positive Borel reducibility results from $=^{+n}$. As corollaries we prove: (1) for $n\leq\omega$, $=^{+n}$ is in the spectrum of the meager ideal. This extends a result of Kanovei, Sabok, and Zapletal for $n=1$; (2) $=^{+\omega}$ is a regular equivalence relation. This answers positively a question of Clemens; (3) for $n<\omega$, the equivalence relations, classifiable by countable structures, which do not Borel reduce $=^{+n}$ are closed under countable products. This extends a result of Kanovei, Sabok, and Zapletal for $n=1$.
\end{abstract}

\maketitle

\section{Introduction}\label{section: introduction}
This paper is a contribution to the study of equivalence relations on Polish spaces up to Borel reducibility.
Given equivalence relations $E$ and $F$ on Polish spaces $X$ and $Y$ respectively, a map $f\colon X\to Y$ is said to be a \textbf{reduction} of $E$ to $F$ if for any $x_1,x_2\in X$, 
\begin{equation*}
    x_1\mathrel{E}x_2\iff f(x_1)\mathrel{F}f(x_2).
\end{equation*}
We say that $E$ is \textbf{Borel reducible} to $F$, denoted ${E}\leq_B{F}$ if there is a Borel measurable function which is a reduction of $E$ to $F$. In this case, we think of $E$ as no more complicated than $F$. Borel reducibility is the most central concept in the study of equivalence relations on Polish spaces. Say that $E$ and $F$ are \textbf{Borel bireducible}, denoted $E\sim_B F$, if $E\leq_B F$ and $F\leq_B E$.
An equivalence relation $E$ on a Polish space $X$ is \textbf{Borel} if $E$ is a Borel subset of $X\times X$, with the product topology. More generally, $E$ is \textbf{analytic} if $E$ is an analytic subset of $X\times X$, that is, $E$ is the projection of a Borel subset of $X \times X \times Y$ for some Polish space $Y$.

A central motivation for the field is to study the complexity of various classification problems in mathematics. Generally speaking, separable mathematical objects can be coded as members of some Polish space. Natural notions of equivalence, such as isomorphism between countable graphs, isometry between separable metric spaces, or homeomorphism between compact metric spaces, can then be seen as equivalence relations on Polish spaces. These are generally analytic, and sometiems Borel. Another point of view is the study of (Borel) definable cardinality between quotients of Polish spaces. A reduction of $E$ to $F$ corresponds to an injective map between from quotient space $X/E$ to $Y/F$. So we study injective maps between such quotient spaces, but only consider ``sufficiently nice'' maps, those which lift to a Borel map between Polish spaces.



Given an equivalence relation $E$ on $X$, the \textbf{Friedman-Stanley jump} of $E$ is the equivalence relation $E^+$ on $X^\N$ defined by
\begin{equation*}
    x\mathrel{E^+}y \iff \forall n \exists m (x(n)\mathrel{E} y(n)) \textrm{ and } \forall n \exists m (y(n) \mathrel{E} x(m)),
\end{equation*}
equivalently, if  $\set{[x(n)]_E}{n\in\N}=\set{[y(n)]_E}{n\in\N}$. The quotient $E^+ / X^\N$ may be identified with $\mathcal{P}_{\aleph_0}(E/X)$, the countable powerset of $E/X$. 

The iterated Friedman-Stanley jumps, $=^{+\alpha}$, are defined recursively along the countable ordinals as follows (see \cite[12.2.6]{Gao09}). 
\begin{itemize}
    \item $=^{+0}$ is the equality relation on $\R$, $=_\R$,
    \item $=^{+(\alpha+1)}$ is defined as $(=^{+\alpha})^+$,
    \item $=^{+\lambda}$ is defined as $\prod_{\alpha<\lambda}=^{+\alpha}$, for a limit ordinal $\lambda$.
\end{itemize}
The equivalence relation $=^{+1}$ is often denoted as $=^+$. The Friedman-Stanley jumps play a central role in the theory of equivalence relations. A classification problem is considered ``classifiable using countable sets of reals as complete invariants'' if it is Borel reducible to $=^{+}$; ``classifiable using countable sets of countable sets of reals as complete invariants'' if it is Borel reducible to $=^{+2}$; and so on.

An equivalence relation is \textbf{classifiable by countable structures} if it is Borel reducible to an isomorphism relation on a space of all countable $\mathcal{L}$-structures, for some countable language $\mathcal{L}$. (See \cite[12.3]{Kano08}, \cite[3.6]{Gao09}, \cite{Hjo00}). A Borel equivalence relation which is classifiable by countable structures is Borel reducible to $=^{+\alpha}$ for some countable ordinal $\alpha$ (see~\cite[Theorem~1.5]{Friedman2000}).

When studying some equivalence relation $E$, we would like to compare it, in terms of Borel reducibility, to a given Friedman-Stanley jump $=^{+\alpha}$. The results in \cite{HKL98} provide a powerful tool for proving that $E$ \emph{is} Borel reducible to $=^{+\alpha}$. There are flexible tools to prove \emph{irreducibility} results between some $E$ and $=^{+\alpha}$, such as the study of pinned cardinals~\cite{Larson-Zapletal-Geometric-2020,Ulrich-Rast-Laskowski-2017} and the use of symmetric models in \cite{Sha20}.

\begin{prob}\label{Problem : Main}
Fix a countable ordinal $\alpha$. Develop tools to construct a Borel reduction from $=^{+\alpha}$ to some other equivalence relation.
\end{prob}
In this paper we provide such tools for $\alpha\leq\omega$. First,  we note that Problem~\ref{Problem : Main} is well understood for $=^+$, that is, $\alpha=1$. There are many results reducing $=^+$ to other equivalence relations\footnote{In the literature, $=^+$ takes many names, including $\mathrm{Eq}^+$, $E_{\mathrm{ctbl}}$, $F_2$, and $\cong_2$.}, for example, \cite[Theorem~65~part 2]{For96}, \cite[Theorem~1.1]{Kaya2017-minimal-cantor-systems}, and \cite[Proposition~3.5]{Anti-classification-CMMS}. There are also three general results for constructing such a reduction:
\begin{itemize}
    \item Marker~\cite[Theorem~1.2]{Mar07} provides a model theoretic criterion for a first order isomorphism relation $\cong_T$ to reduce $=^+$: if the type space $S(T)$ is uncountable.
    \item Larson and Zapletal~\cite[Theorem~2.8.11]{Larson-Zapletal-Geometric-2020} provide a set theoretic criterion for an analytic equivalence relation $E$ to reduce $=^+$: if $E$ is unpinned in the Solovay extension.
    \item Kanovei, Sabok, and Zapletal provided the following Baire-category tool. 
\end{itemize} 
\begin{thm}[Kanovei-Sabok-Zapletal~{\cite[Theorem 6.24]{ksz}}]\label{thm;ksz-thm-6-24}
Let $E$ be an analytic equivalence relation. Then either
\begin{itemize}
    \item $=^+$ is Borel reducible to $E$, or
    \item any Borel homomorphism from $=^+$ to $E$ maps a comeager set into a single $E$-class. 
\end{itemize}
\end{thm}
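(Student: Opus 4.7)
The plan is to prove the contrapositive: assuming a Borel homomorphism $f\colon \R^\N \to Y$ does not map any comeager set into a single $E$-class, we construct a Borel reduction $=^+\leq_B E$. The whole argument is Baire-category, analyzing what $f$ does to Cohen-generic enumerations of countable sets of reals. By the standard fact that a Borel map is continuous on a comeager set, we may replace $f$ by its restriction to a dense $G_\delta$ set of Cohen-generic sequences on which $f$ is continuous.

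The first observation is an invariance: for a Cohen-generic $\bar x$, the $E$-class $[f(\bar x)]_E$ depends only on the range $A_{\bar x}=\{\bar x(n):n<\omega\}$, because any two Cohen-generic enumerations of the same set are $=^+$-equivalent (related by a permutation of $\N$), so their $f$-images are $E$-equivalent by the homomorphism property. This yields a partial map on countable sets of reals, $A\mapsto [f_A]_E$, defined on a comeager class of generic sets.

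The heart of the proof is a zero-one dichotomy for the Baire-measurable set $R_E=\{(\bar x,\bar y): f(\bar x)\mathrel{E}f(\bar y)\}$, considered inside the Polish space of pairs of mutually Cohen-generic sequences (which automatically have distinct ranges). Kuratowski--Ulam forces $R_E$ to be either comeager or meager. In the comeager case, a chain argument applies: given any two generics $\bar x,\bar y$, interpolate a third generic $\bar z$ mutually generic with each (possible on a comeager set), yielding $f(\bar x)\mathrel{E}f(\bar z)\mathrel{E}f(\bar y)$; so $f$ maps the comeager set of generics into a single $E$-class, contradicting our assumption. Hence $R_E$ is meager, so for generic pairs with distinct ranges $f(\bar x)\not\mathrel{E} f(\bar y)$, and $\bar x\mapsto f(\bar x)$ injects $=^+$-classes of generics into $E$-classes.

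It remains to upgrade this into a Borel reduction on all of $\R^\N$. For this I would produce a Borel map $g\colon \R^\N\to \R^\N$ landing inside the set of Cohen-generic sequences and reducing $=^+$ to itself, for instance by fixing a canonical Borel coding of each real as a Cohen-generic enumeration over a fixed base (using a Borel partition of $\R$ into uncountably many disjoint generic fibers); then $f\circ g$ is the desired Borel reduction. The principal technical obstacle is the zero-one dichotomy: controlling Baire category on the non-product set of mutually generic pairs and, in the chain argument, producing a common generic $\bar z$ for arbitrary $\bar x,\bar y$ from the comeager base. A secondary obstacle is the construction of the Borel ``generization'' map $g$, which must simultaneously preserve $=^+$, land inside the comeager set of generics, and remain Borel measurable.
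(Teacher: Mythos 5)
Your sketch opens in the right direction: work inside the Cohen generics where $f$ is continuous, observe that $[f(\bar x)]_E$ depends only on $A_{\bar x}$, and reduce to showing the invariant set $R_E=\{(\bar x,\bar y):f(\bar x)\mathrel{E}f(\bar y)\}$ is meager (the dichotomy is really topological ergodicity of the $(S_\infty)^2$-action on $((2^\N)^\N)^2$ applied to the invariant set $R_E$, not Kuratowski--Ulam on its own, but the conclusion is right, and the chain/interpolation argument ruling out the comeager horn is fine). The genuine gap lies in what meagerness of $R_E$ actually gives you. Via mutual genericity it yields $f(\bar x)\not\mathrel{E}f(\bar y)$ only when $(\bar x,\bar y)$ is a \emph{mutually generic} pair, and mutually generic sequences automatically have \emph{disjoint} ranges. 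Two individually generic $\bar x,\bar y$ with overlapping but distinct ranges --- e.g.\ take $\bar x$ generic over $M$ and let $\bar y$ replace $\bar x(0)$ by a real generic over $M[\bar x]$ --- are never mutually generic, and the meager/comeager analysis says nothing about them. So the passage ``for generic pairs with distinct ranges $f(\bar x)\not\mathrel{E}f(\bar y)$'' does not follow, and the map does not yet inject $=^+$-classes of generics into $E$-classes.

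Your proposed generization map does not close this. For any reduction $g$ of $=^+$ to itself built by canonically coding reals, if $a\in A_{\bar x}\cap A_{\bar y}$ then the code of $a$ lands in both $A_{g(\bar x)}$ and $A_{g(\bar y)}$, so the images still overlap and still fail mutual genericity. This overlapping case is precisely what drives the construction in Section~\ref{subsection : warmup n=1 case}, following~\cite[Theorem~6.24]{ksz}: fix a Mycielski-type ``sufficiently generic'' $g\colon 2^\N\to(2^\N)^\N$ sending each real to a whole \emph{set} of reals so that distinct inputs give mutually generic sets, let $f(\bar x)$ be an injective enumeration of $\bigcup_n g(\bar x(n))$, and then, for $\bar x\not\mathrel{=^+}\bar y$, run the case analysis of Section~\ref{subsection : warmup dichotomy n=1 case}. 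Disjoint ranges is essentially your argument; when the ranges overlap properly one writes $f(\bar x)$ and $f(\bar y)$ as enumerations of $a\cup c$ and $b\cup c$ with $a,b,c$ coming from $A_{\bar x}\setminus A_{\bar y}$, $A_{\bar y}\setminus A_{\bar x}$, $A_{\bar x}\cap A_{\bar y}$ and mutually generic, and needs the stronger three-variable statement $\forall^\ast(a,b,c)\,[a\cup c\not\mathrel{E}b\cup c]$, together with a Vaught-transform argument to get the actual $(a,b,c)$ produced by the construction into that comeager set; containment is handled similarly. This decomposition into mutually generic pieces, the resulting case split, and the three-variable Baire-category lemma are the missing core of the proof.
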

Given equivalence relations $E$ and $F$ on Polish spaces $X$ and $Y$, a map $f\colon X\to Y$ is a \textbf{Borel homomorphism} from $E$ to $F$, denoted $f\colon E \to_B F$, if for any $x_1,x_2\in X$,
\begin{equation*}
    x_1\mathrel{E}x_2 \implies f(x_1)\mathrel{F}f(x_2).
\end{equation*}
Theorem~\ref{thm;ksz-thm-6-24} says that if there is no Borel reduction of $=^+$ to $E$, then in fact all Borel homomorphisms from $=^+$ to $E$ are trivial, on a comeager set.

We mention two immediate difficulties in generalizing this to the higher jumps.
\begin{remark}
For $n\geq 2$, $=^{+n}$ \emph{does not} behave well, in terms of Baire-category, with the product topology given by the Friedman-Stanley jump. See Claim~\ref{claim: =++ reduction on comeager} below.
\end{remark}
\begin{remark}
    For $1\leq k<n$, there \emph{is} a natural Borel homomorphism from $=^{+n}$ to $=^{+k}$, which is not ``completely trivial''. For example, the homomorphism $u\colon (\R^\N)^\N\to  \R^\N$ from $=^{+2}$ to $=^+$, defined by $u(x)(\seq{n,m}) = x(n)(m)$, where $\seq{\,,\,}\colon \N\times\N\to\N$ is a bijection. This is the ``union homomorphism'', taking a set of sets of reals $\set{\set{x(n)(m)}{m\in\N}}{n\in\N}$ to an enumeration of their union $\set{x(n)(m)}{n,m\in\N}$.
\end{remark}
With these two modifications in mind, we provide a complete Baire-category analysis of all Borel homomorphisms from $=^{+n}$, $n\leq\omega$, to equivalence relations which are classifiable by countable structures. This is the main result of the paper.
\begin{thm}\label{Theorem : Main}
There are Borel equivalence relations $F_n$ for $n\leq \omega$, and Borel homomorphisms $u^n_k\colon F_n\to_B F_k$ for $1\leq k\leq n\leq\omega$, so that for each $n\leq \omega$, $F_n$ and $=^{+n}$ are Borel bireducible, 
and the following dichotomy for Borel homomorphisms holds.
For any equivalence relation $E$ which is classifiable by countable structures, either
  \begin{itemize}
        \item $F_{n}$ is Borel reducible to $E$, or
        \item  for any Borel homomorphism $f\colon F_n\to_B E$ there is some $k<n$ so that the homomorphism $f$ factors through $u^n_{k}$ on a comeager set, that is, there is a Borel homomorphism $h\colon F_{k}\to_B E$, defined on a comeager set, so that $h\circ u^n_{k}(x)\mathrel{E}f(x)$ for a comeager set of $x$ in the domain of $F_n$. (See Figure~\ref{figure: main thm factoring}.)
    \end{itemize}
\end{thm}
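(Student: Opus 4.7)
The plan is to proceed by induction on $n\leq\omega$, with Theorem~\ref{thm;ksz-thm-6-24} providing the base case $n=1$ (taking $F_0$ to be trivial and interpreting ``factoring through $u^1_0$'' as constancy on a comeager set, in line with the conclusion of the KSZ theorem). The first task is to construct $F_n$: for each $n\leq\omega$ we need a Polish space $X_n$ carrying a Borel equivalence relation $F_n\sim_B =^{+n}$ whose topology is amenable to Baire-category arguments. A natural approach, for finite $n$, is to code an $F_n$-class as a hereditarily countable set of rank $n$, presenting the Polish space layer by layer so that there is a continuous surjection $\pi_n\colon X_n\to X_{n-1}$ compatible with the equivalence relations and inducing the homomorphism $u^n_{n-1}$; crucially, the fiber over a generic point should itself be a Polish space on which the restricted equivalence relation is bi-reducible with $=^+$. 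For $n=\omega$ the space $X_\omega$ is constructed to reflect the product presentation $=^{+\omega}=\prod_{k<\omega}=^{+k}$. Then set $u^n_k=\pi_{k+1}\circ\cdots\circ\pi_n$, and verify $F_n\sim_B=^{+n}$ by explicit construction of reductions in both directions.

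For the inductive step at finite $n$, assume the theorem for every $k<n$ and let $f\colon F_n\to_B E$ with $E$ classifiable by countable structures. I would apply Kuratowski--Ulam to the fibers of $\pi_n$: for a comeager set of $y\in X_{n-1}$, the restriction of $f$ to the fiber $\pi_n^{-1}(y)$ is a Borel homomorphism from a copy of $=^+$ to $E$. Applying Theorem~\ref{thm;ksz-thm-6-24} fiberwise, either this restriction is constant on a single $E$-class on a comeager subset of the fiber, or it yields a Borel reduction of $=^+$ to $E$ on the fiber. In the second case, I would amplify the family of fiberwise reductions, using a Borel uniformization and the structure of $F_n$ over its quotient by $u^n_{n-1}$, into a global Borel reduction $F_n\leq_B E$, contradicting the hypothesis. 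So the first case must hold on a comeager set of fibers, yielding a Borel homomorphism $h\colon F_{n-1}\to_B E$ defined on a comeager set with $h\circ\pi_n\mathrel{E} f$ comeagerly. Applying the inductive hypothesis to $h$ gives either $F_{n-1}\leq_B E$ (which amplifies to $F_n\leq_B E$, again contradicting the hypothesis) or a factoring of $h$ through some $u^{n-1}_k$ with $k<n-1$; composing yields the required factoring of $f$ through $u^n_k$.

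For $n=\omega$, the product presentation of $F_\omega$ produces, from a Borel homomorphism $f\colon F_\omega\to_B E$, restrictions $f_k\colon F_k\to_B E$ at each finite coordinate. If any $f_k$ witnesses $F_k\leq_B E$ then $F_\omega\leq_B E$ by direct amplification, contradicting the hypothesis. Otherwise the inductive hypothesis provides, for each $k$, a factoring of $f_k$ through some $u^k_{j_k}$; a diagonal argument, using the Borelness of $f$ together with the finite-support nature of coordinatewise homomorphisms into a fixed $E$, should force these factorings to stabilize at some finite level $k$, yielding the desired factorization through $u^\omega_k$.

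The principal obstacle is the amplification step in the successor induction: turning a generic-fiber reduction of $=^+$ into a global Borel reduction $F_n\leq_B E$. This requires the presentation $F_n$ to be sufficiently ``product-like'' over its $u^n_{n-1}$-quotient to support a Borel uniformization of the fiberwise reductions, and it is precisely to make this step viable that the alternative, Baire-category-friendly presentation $F_n$ is essential rather than the default product topology on $(\R^\N)^\N$, with which (as noted in the paper) $=^{+n}$ does not behave well for $n\geq 2$.
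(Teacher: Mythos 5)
Your proposal has a genuine gap at exactly the place you flag as ``the principal obstacle,'' and the obstacle is not merely technical but requires a fundamentally different idea than fiberwise induction. Here is why the fiberwise strategy stalls. Applying Theorem~\ref{thm;ksz-thm-6-24} to $f$ restricted to a single fiber $\pi_n^{-1}(y)$ tells you either that $=^+\leq_B E$ (a global fact about $E$, extracted from one fiber) or that $f$ is comeager-constant on that fiber. So the relevant branch for you is: $=^+\leq_B E$, and the restriction of $f$ to a non-meager set of fibers is a reduction of $F_1$ to $E$. But $F_1 \leq_B E$ is far weaker than $F_n \leq_B E$, and there is no canonical way to assemble fiberwise $F_1$-to-$E$ reductions into a global $F_n$-to-$E$ reduction: the reduction on the fiber over $y$ is built \emph{relative to the enumeration of the levels encoded by $y$}, and the $F_n$-equivalence between points in different fibers is precisely the freedom to re-enumerate those levels. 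Any naive Borel uniformization of fiberwise reductions will depend on the enumeration of $y$, not just on $[y]_{F_{n-1}}$, and will therefore fail to be an $F_n$-homomorphism, let alone a reduction. This is the point the paper makes in Section~\ref{Section: obstacles first round}: the construction at level $n$ ``has to rely on the enumerations coming from $x$, while ultimately being independent of those up to $F_n$-equivalence,'' and ``it cannot be done by a direct iteration of the previous construction.''

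The paper's actual mechanism is different in kind, and it is worth noting that your proof never uses the hypothesis that $E$ is classifiable by countable structures, which the paper uses crucially. Rather than inducting over fibers, the paper (i) reformulates the dichotomy in terms of equivalence relations $E$ extending $F_n$ (Theorem~\ref{Theorem : Main reformulated}); (ii) proves a decomposition result, Lemma~\ref{lemma: breaking down homomorphism} / Lemma~\ref{lemma: main proof breakdown of E}, producing a chain of equivalence relations $E_1\supseteq E_2\supseteq\cdots\supseteq E_n=E$ with $E_k$ living over $X_k$, $E\subseteq E_k$, and $E_{k+1}$ not extending $F_k$ on a comeager set --- this is where the classifiability assumption enters, through pins, Scott analysis, and Monro-style symmetric models $V(A_n)$; and (iii) constructs a single explicit Borel map $f=\gamma_0\times\gamma^{n\setminus\{0\}}$ (Sections~\ref{section: the main construction} and \ref{subsec: defn of homomorphism f}) whose generic Vaught-transform behavior is controlled simultaneously at all levels by Lemma~\ref{lemma: main technical sufficient condition for genericity}. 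The reduction is then verified by considering, for $x\not\mathrel{F_n}y$, the \emph{first} level $k$ at which they disagree, and showing via Lemma~\ref{lemma: main proof last coordinate reduction} that $f(x)\not\mathrel{E_k}f(y)$, hence $f(x)\not\mathrel{E}f(y)$. Your proposal would need both the decomposition lemma (for which you give no mechanism and which is where the model-theoretic/set-theoretic content lives) and the single global construction replacing the fiberwise uniformization; without these, the second branch of your fiber dichotomy has no way to close.
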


\begin{figure}[H]\label{figure: main thm factoring}
    \centering
\begin{tikzpicture}[
node/.style={}]
\node[node]      (dplus)   at (0,0)        {$F_n$};
\node[node]      (eqdplux) [right=0.01cm of dplus]  {\color{gray}{$\sim_B\, =^{+n}$}};

{
\node[node]     (uplus) [below=of dplus]
{$F_k$};
\draw[->]  (dplus.south) --node[anchor=east] {$u^n_k$} (uplus.north) ;
}
{
\node[node]     (fplus) [right=of uplus]
{$E$};

\draw[->] (dplus.south east) -- node[anchor=south west] {$f$} (fplus.north west)   ;
\draw[->,dashed]    (uplus.east) --node[anchor=south] {$h$} (fplus.west) ;
}
\end{tikzpicture}


    \caption{$(\forall f\colon F_n\to_B E )(\exists k<n\, \exists h\colon F_k\to_B E)$} 
    \label{fig:enter-label}
\end{figure}
\begin{remark}
As remarked above the equivalence relations $F_n$, $n>1$, are necessarily different than $=^{+n}$. For $n=1$, the equivalence relation $F_1$ is simply $=^+$. Let $F_0$ be the trivial equivalence relation on a space $\{\ast\}$ with a single element, and consider the trivial homomorphism $u^1_0\colon F_1 \to_B F_0$. Then the second bullet of Theorem~\ref{thm;ksz-thm-6-24} for $=^+$ has the same form as Figure~\ref{figure: main thm factoring} with $k=0$ and $n=1$. 
\end{remark}

\begin{remark}
    We see Theorem~\ref{Theorem : Main} as a tool to prove that $=^{+n}$ is Borel reducible to some equivalence relation $E$. In order to prove that such reduction exists, it suffices to find a Borel homomorphism which is ``sufficiently different'' from the homomorphisms $u^n_k$, for $k<n$.
\end{remark}

The definition of the equivalence relations $F_n$, appearing in Theorem~\ref{Theorem : Main}, is given in Section~\ref{Section: def of Fn into}. A group action inducing $F_n$ is presented in Section~\ref{subsection : orbit ER presentation}. We then prove the following corollaries of Theorem~\ref{Theorem : Main}. The definitions and background are presented in each subsection.
\begin{enumerate}
    \item {(Section~\ref{subsection : spectrum of the meager ideal}.)}  For $n\leq\omega$, $=^{+n}$ is in the spectrum of the meager ideal. This was proved for $n=1$ in \cite{ksz}.
    \item {(Section~\ref{subsec: primeness})} $=^{+\omega}$ is regular. This answers positively a question of Clemens~\cite{Clemens-primeness-2020}. 
    \item {(Section~\ref{subsec: products})} Fix $n<\omega$. Suppose $G_k$, $k\in\N$, are classifiable by countable structures and ${=^{+n}}\not\leq_B {G_k}$. Then ${=^{+n}}\not\leq_B\prod_{k\in\N}G_k$. This was proved for $n=1$ in \cite{ksz}.
\end{enumerate}
Several open questions related to these results are posed in the relevant subsections. In Section~\ref{subsec: Borel complexity} we note that the Borel complexity of $F_n$ is $\mathbf{\Pi}^0_{2+n}$, which is the optimal potential complexity of $=^{+n}$ by~\cite{HKL98}.

We first focus on proving a corollary of Theorem~\ref{Theorem : Main}, that $F_n$ preserves its complexity on comeager sets (see Section~\ref{subsection : spectrum of the meager ideal}), which is proved in Section~\ref{section: complexity on comeager sets} (Theorem~\ref{thm: warmup proof retains complexity on comeager sets}). In Section~\ref{Section: obstacles first round} we sketch some ideas from \cite{ksz}, for proving that $=^+$ retains its complexity on comeager sets, and explain the main difficulties towards $n\geq 2$. The main construction, which will eventually lead to the necessary reductions of $F_n$, is presented in Section~\ref{section: the main construction}. In Section~\ref{Section: permutations} we present some technical results regarding Vaught transoforms for the actions presented in Section~\ref{subsection : orbit ER presentation}.

In Section~\ref{Section: obstacles second round} we present some ideas behind the proof of Theorem~\ref{thm;ksz-thm-6-24} from~\cite{ksz}, and explain the remaining difficulties towards extending these to the $n\geq 2$ case. In particular, we will use the following lemma.
\begin{lemma}\label{lemma: breaking down homomorphism}
Let $E$ be an equivalence relation which is classifiable by countable structures and let $f\colon F_n \to_B E$ be a Borel homomorphism which does not factor through $u^n_k$, $k<n$, on a comeager set. Then there are equivalence relations $E_k$, for $k<n$, Borel homomorphisms $\pi^n_k \colon E \to_B E_k$, $\pi^{k+1}_k\colon E_{k+1} \to_B E_k$, and $f_k\colon F_k \to_B E_k$ so that the following diagram commutes on comeager sets, and so that $f_k$ does not factor through $u^k_l$ for $l<k$.
\begin{center}
    \begin{tikzpicture}
    \node[draw=none] (F1) at (0,0) {$F_1$};
    \node[draw=none] (F2) at (2,0) {$F_2$};
    \node[draw=none] (F3) at (4,0) {$F_3$};
    \node[draw=none] (F4) at (6,0) {$\dots$};
    \node[draw=none] (F5) at (8,0) {$F_n$};
    \node[draw=none] (E1) at (0,-2) {$E_1$};
    \node[draw=none] (E2) at (2,-2) {$E_2$};
    \node[draw=none] (E3) at (4,-2) {$E_3$};
    \node[draw=none] (E4) at (6,-2) {$\dots$};
    \node[draw=none] (E5) at (8,-2) {$E$};
    \draw[->] (F2) -- node[midway,above] {$u^2_1$} (F1);
    \draw[->] (F1) -- node[midway,right] {$f_1$} (E1);
    \draw[->] (F2) -- node[midway,right] {$f_2$} (E2);
    \draw[->] (F3) -- node[midway,above] {$u^3_2$} (F2);
    \draw[->] (F3) -- node[midway,right] {$f_3$} (E3);
    \draw[->] (F4) -- node[midway,above] {$u^4_3$} (F3);
    \draw[->] (F5) -- node[midway,above] {$u^n_k$} (F4);
    \draw[->] (F5) -- node[midway,right] {$f$} (E5);
    \draw[->] (E2) -- node[midway,above] {$\pi^2_1$} (E1);
    \draw[->] (E3) -- node[midway,above] {$\pi^3_2$} (E2);
    \draw[->] (E4) -- node[midway,above] {$\pi^3_2$} (E3);
    \draw[->] (E5) -- node[midway,above] {$\pi^n_k$} (E4);
\end{tikzpicture}
\end{center}
\end{lemma}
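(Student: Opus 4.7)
My strategy is to build the tower of equivalence relations from right to left, one level at a time. Since $E$ is classifiable by countable structures, I may compose $f$ with a Borel reduction and assume that $E$ arises as the orbit equivalence relation of a continuous action of a closed subgroup $G\leq S_\infty$ on a Polish space $Y$. This brings into play the Vaught transform machinery developed in Section~\ref{Section: permutations}, which I expect to be essential for turning set-theoretic descriptions of $E_k$ into honest Borel equivalence relations.

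For the top stage I would define $E_{n-1}$ to be the smallest $G$-invariant Borel equivalence relation on $Y$ coarsening $E$ with the property that $f(x) \mathrel{E_{n-1}} f(x')$ whenever $u^n_{n-1}(x) \mathrel{F_{n-1}} u^n_{n-1}(x')$. Concretely, $y \mathrel{E_{n-1}} y'$ iff there exist $x,x' \in \dom(F_n)$ with $f(x)\mathrel{E}y$, $f(x')\mathrel{E}y'$, and $u^n_{n-1}(x)\mathrel{F_{n-1}}u^n_{n-1}(x')$; I would then take $\pi^n_{n-1}\colon E\to_B E_{n-1}$ to be the identity on $Y$, viewed as a homomorphism. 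The map $f_{n-1}$ would be produced by a Borel uniformization of the relation $\bigl\{\bigl(u^n_{n-1}(x),\,f(x)\bigr) : x \in \dom(F_n)\bigr\}$, yielding a Borel function on a comeager subset of $\dom(F_{n-1})$. Commutativity of the rightmost square modulo $E_{n-1}$ on a comeager set would follow directly from the definitions. The key Baire-category input here is that the presentation $F_n$ constructed in Section~\ref{section: the main construction} is genuinely compatible with Borel uniformization over $u^n_{n-1}$-fibers on comeager sets, which is the reason one works with $F_n$ in place of the naive $=^{+n}$.

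Iterating the above with $(E_{n-1}, f_{n-1})$ in place of $(E, f)$ would yield $(E_k, f_k)$ and the required horizontal homomorphisms $\pi^{k+1}_k$ for all $k<n$, with $\pi^n_k$ obtained as the composition down the bottom row. The main obstacle, where I expect the bulk of the technical work, is verifying that each $f_k$ does not factor through $u^k_l$ for $l<k$; this is needed both for the iteration to continue and for the conclusion of the lemma. My plan here is a pullback argument: a hypothetical Borel homomorphism $h\colon F_l\to_B E_k$ witnessing such a factorization would be lifted through $\pi^n_k$ using the Vaught transforms to produce a Borel homomorphism $\tilde h\colon F_l\to_B E$ satisfying $\tilde h\circ u^n_l\mathrel{E} f$ on a comeager set, contradicting the hypothesis. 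The delicate point is producing such a lift $\tilde h$, and this is where the interplay between the construction of Section~\ref{section: the main construction} and the Vaught-transform machinery of Section~\ref{Section: permutations} comes together, with Kuratowski-Ulam handling the ``on a comeager set'' quantifiers across the intermediate levels.
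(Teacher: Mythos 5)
Your approach is genuinely different from the paper's, but it has gaps that I do not think can be repaired as stated.

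The paper's proof of this lemma is a forcing argument, not a Baire-category argument. It applies the Scott analysis to get a complete classification $x\mapsto B_x$ of $E$ by hereditarily countable sets, passes to the symmetric extensions $V(A_k)$ of Monro and \cite{Sha20}, and shows that the $E$-pin $(\Q_n,\sigma_n)$ produces a set $B\in V(A_n)\setminus V(A_{n-1})$ definable from $A_n$. The intermediate relations $E_k$ are then manufactured by truncating the transitive closure of $B$ at the least rank $\alpha_k$ for which $\mathrm{t.c.}(B)\cap\mathcal P^{\alpha_k}(\eta)\notin V(A_{k-1})$; Monro's lemma (here Lemma~\ref{lemma: monro models subset of Mk is in Mkplus1}) then forces this truncation into $V(A_k)$, and Lemma~\ref{lemma: Ek from set in Ak} turns that set-theoretic fact into the non-factoring statement about $E_k$. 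Nothing in your proposal engages with any of this.

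Concretely, the gaps are as follows. First, your definition of $E_{n-1}$ is not a well-defined Borel object: the binary relation ``there exist $x,x'$ with $f(x)\mathrel{E}y$, $f(x')\mathrel{E}y'$, $u^n_{n-1}(x)\mathrel{F_{n-1}}u^n_{n-1}(x')$'' is analytic but not transitive, since $E$-equivalence of $f(x')$ and $f(z')$ does not give $F_{n-1}$-equivalence of $u^n_{n-1}(x')$ and $u^n_{n-1}(z')$ precisely when $f$ fails to factor through $u^n_{n-1}$. Its transitive closure need not be Borel, and ``the smallest $G$-invariant Borel equivalence relation'' containing an analytic relation is not in general an existing object. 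Second, and more seriously, the inductive step --- that the resulting $f_{n-1}$ again does not factor through $u^{n-1}_l$ --- is the entire content of the lemma, and you describe it only as a plan involving lifting through Vaught transforms. There is no indication of why a homomorphism $h\colon F_l\to_B E_k$ should lift to a homomorphism into $E$; the ``levels'' of $E$ that the paper obtains are invisible without the pinned-invariant analysis, because $E$ itself has no internal grading that Baire category alone can see. Third, your argument makes no essential use of the hypothesis that $E$ is classifiable by countable structures (merely presenting $E$ as a $G$-orbit equivalence relation does not suffice; any analytic relation embeds into one). But the paper explicitly flags (Remark~1.9) that this hypothesis is used exactly and only in this lemma, and poses as an open problem (Question~\ref{question: breaking homomorphism}) whether the lemma holds for arbitrary analytic $E$. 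A purely category-theoretic proof of the lemma would therefore resolve that open question; the fact that you are not invoking the hypothesis is a strong signal that something is missing.
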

The proof of Theorem~\ref{Theorem : Main} is then completed in Section~\ref{Section: proof of main thm}.
\begin{remark}
The proof of Lemma~\ref{lemma: breaking down homomorphism} is the only place in which we use that $E$ is classifiable by countable structures. Extending the lemma for a wider class of equivalence relations will similarly extend Theorem~\ref{Theorem : Main}, as well as Theorem~\ref{Theorem : =+omega prime} and Proposition~\ref{prop: nonreduction for products} below.
\end{remark}
\begin{question}\label{question: breaking homomorphism}
    Is Lemma~\ref{lemma: breaking down homomorphism} true for all analytic equivalence relations?
\end{question}

\begin{prob}
    Find a model theoretic condition for an isomorphism relation $\cong_T$ to reduce $=^{+n}$, extending the result \cite[Theorem~1.2]{Mar07} for $n=1$.
\end{prob}
\begin{prob}
    Find a set theoretic condition for an equivalence relation $E$ to reduce $=^{+n}$, extending the result \cite[Theorem~2.8.11]{Larson-Zapletal-Geometric-2020} for $n=1$.
\end{prob}

\subsection{The definition of $F_n$ and $u^n_k$ from Theorem~\ref{Theorem : Main}}\label{Section: def of Fn into}
Consider the Polish space $((2^\N)^\N)^\omega$, with the natural product topology. We use the following standard notation: the space $2$ is identified with the discrete space with two elements $\{0,1\}$. The ordinal $\omega$ is identified with the set of natural numbers $\N=\{0,1,2,\dots\}$.
The space $2^\N$ is identified with the space $\mathcal{P}(\N)$ of all subsets of $\N$.

Given $x\in ((2^\N)^\N)^\omega$, we define a sequence $A^x_n$, $n=1,2,\dots$, as follows.
\begin{itemize}
    \item $A^x_1=\set{x(0)(k)}{k\in\N}$.
    \item For $l\in\N$, define $a^{x,l}_1=\set{x(0)(k)}{x(1)(l)(k)=1}$, a subset of $A^x_1$.
\end{itemize}

Given $A^x_n$ and $a^{x,l}_n$ for $l\in\N$, define

\begin{itemize}    
    \item $A^x_{n+1}=\set{a^{x,k}_n}{k\in\N}$, and
    \item $a^{x,l}_{n+1}=\set{a^{x,k}_n}{x(n+1)(l)(k)=1}$, a subset of $A^x_{n+1}$.
\end{itemize}
For $m<\omega$, and $x\in ((2^\N)^\N)^m$ we define similarly $A^x_n$ for $n=1,2,\dots,m$.
\begin{defn}\label{defn : domain of Fn}
For $2\leq n\leq \omega$, define $X_n\subset ((2^\N)^\N)^n$ as the set of all $x$ such that:
\begin{enumerate}
    \item $(\forall 1\leq i<n)(\forall m)(\exists k)x(i)(k)(m)=1$;
    \item $(\forall 1\leq i <n)(\forall k)(\exists m)x(i)(k)(m)=1$;
    \item $(\forall 1\leq i<n)(\forall k,l_1,l_2)(x(i-1)(l_1)\mathrel{=}{}x(i-1)(l_2)\rightarrow x(i)(k)(l_1)=x(i)(k)(l_2))$.
\end{enumerate}
\end{defn}
\begin{obs}
From condition (1) it follows that for any $n\leq\omega$,
\begin{itemize}
    \item If $x\in X_n$ then $A^x_k=\bigcup A^x_{k+1}$ for any $k<n$.
    \item For $m<\omega$, $m\leq n$, $x,y\in X_n$, if $A^x_m=A^y_m$ then $A^x_k=A^y_k$ for all $k<m$.
\end{itemize}
\end{obs}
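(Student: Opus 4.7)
The plan is to derive both bullets by unfolding the recursive definitions, using only condition~(1) from Definition~\ref{defn : domain of Fn}.

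For the first bullet, fix $1\leq k<n$ (so that $k+1\leq n$ and $A^x_{k+1}$ is defined from $x(0),\dots,x(k)$). The inclusion $\bigcup A^x_{k+1}\subseteq A^x_k$ is immediate from the construction: by definition $A^x_{k+1}=\set{a^{x,l}_k}{l\in\N}$ and each $a^{x,l}_k$ was defined as a subset of $A^x_k$, so the union of its members sits inside $A^x_k$. For the reverse inclusion, I would take an element of $A^x_k$, which is of the form $x(0)(m)$ when $k=1$ and of the form $a^{x,m}_{k-1}$ when $k\geq 2$, for some index $m\in\N$. Applying condition~(1) at level $i=k$ produces some $l\in\N$ with $x(k)(l)(m)=1$, and then by the very definition of $a^{x,l}_k$ this element belongs to $a^{x,l}_k$, hence to $\bigcup A^x_{k+1}$.

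For the second bullet, I would run a downward induction on $k$, starting at the base case $k=m-1$. Given the inductive hypothesis $A^x_{k+1}=A^y_{k+1}$, and noting that $k+1\leq m\leq n$ so that the first bullet applies at level $k$, one obtains
\begin{equation*}
A^x_k=\bigcup A^x_{k+1}=\bigcup A^y_{k+1}=A^y_k,
\end{equation*}
which closes the induction.

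The only delicate point — and the reason the statement restricts to $k<n$ — is checking that $A^x_{k+1}$ is in fact defined, which needs $x(0),\dots,x(k)$ to be available; this is where membership in $((2^\N)^\N)^n$ is used. Beyond this indexing bookkeeping, the argument is essentially routine, and conditions~(2) and~(3) of Definition~\ref{defn : domain of Fn} play no role here, so I do not anticipate a genuine obstacle.
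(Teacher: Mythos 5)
Your proof is correct and fills in exactly the unfolding the paper leaves implicit (the paper states the Observation without an explicit argument, noting only that it follows from condition~(1)). The forward inclusion $\bigcup A^x_{k+1}\subseteq A^x_k$ is immediate since each $a^{x,l}_k$ is by construction a subset of $A^x_k$; the reverse inclusion uses condition~(1) at level $i=k$ to find, for each index $m$, some $l$ with $x(k)(l)(m)=1$, so that $x(0)(m)\in a^{x,l}_1$ when $k=1$ (resp.\ $a^{x,m}_{k-1}\in a^{x,l}_k$ when $k\geq 2$); and the downward induction for the second bullet then uses the first bullet at each level $k<m\leq n$. Your observation that conditions~(2) and~(3) are not needed is also consistent with the paper's phrasing.
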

\begin{obs}
In the construction above, we used the binary sequence $x(i+1)(t)\in 2^\N$ to code a subset of $A^x_{i+1}$, via its enumeration $\seqq{a^{x,k}_i}{k\in\N}$. Condition (3) says that for $x\in X_n$, if $l_1$ and $l_2$ are identified in this enumeration, $a_{i}^{x,l_1}=a_{i}^{x,l_2}$, then they are also identified by $x(i+1)(t)$.
Condition (2) says that for $x\in X_n$, for each $i<n$, $a_{i+1}^{x,l}$ is a non-empty subset of $A^x_{i+1}$.
\end{obs}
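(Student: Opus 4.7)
The plan is to verify the two claims in the observation by translating conditions (2) and (3) from the syntactic level of the coding sequences $x(i)(k)\in 2^\N$ to the semantic level of the subsets $a^{x,l}_i \subseteq A^x_i$ that they encode. The claim about condition (2) is a direct unwinding of definitions; the claim about condition (3) requires a brief induction to bridge the gap between a coding sequence and the subset it names.

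For the claim about condition (2): by construction $a^{x,l}_{i+1}=\set{a^{x,k}_i}{x(i+1)(l)(k)=1}$, which is manifestly a subset of $A^x_{i+1}=\set{a^{x,k}_i}{k\in\N}$. Non-emptiness is precisely the content of condition (2) with index $i+1$: it furnishes some $m$ with $x(i+1)(l)(m)=1$, and this $m$ witnesses $a^{x,m}_i \in a^{x,l}_{i+1}$.

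For the claim about condition (3): what condition (3) literally asserts is the implication $x(i)(l_1)=x(i)(l_2) \Rightarrow x(i+1)(t)(l_1)=x(i+1)(t)(l_2)$, while the observation claims this same conclusion under the weaker hypothesis $a^{x,l_1}_i=a^{x,l_2}_i$. So it suffices to prove, by induction on $i\geq 1$ for $x\in X_n$, the implication
\begin{equation*}
    a^{x,l_1}_i=a^{x,l_2}_i \;\Longrightarrow\; x(i)(l_1)=x(i)(l_2) \text{ as elements of } 2^\N,
\end{equation*}
after which condition (3) applied with index $i+1$ finishes. The base case $i=1$ uses condition (3) with index $1$: it says that $x(1)(l)(k)$ depends only on $x(0)(k)\in A^x_1$, so $x(1)(l)$ descends to a well-defined function on $A^x_1$, and the equality of the coded subsets forces these functions to agree pointwise, hence $x(1)(l_1)=x(1)(l_2)$ in $2^\N$. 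The inductive step is structurally identical: the inductive hypothesis together with condition (3) at index $i+1$ shows that $x(i+1)(l)(k)$ depends only on $a^{x,k}_i$, and the same argument then propagates to $i+1$.

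The main (and really only) subtlety is keeping careful track of the distinction between a coding sequence $x(i)(l)\in 2^\N$ and the subset $a^{x,l}_i$ it names: in general many different sequences can code the same set, and the content of condition (3), applied recursively in the induction above, is precisely that no such coding redundancy occurs inside $X_n$.
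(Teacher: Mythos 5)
Your proof is correct, and since the paper states this as an unproved \emph{Observation}, there is no official argument to compare against; yours is the natural way to fill in the details. You have also put your finger on the one genuine subtlety: condition (3) is phrased in terms of equality of \emph{coding sequences} $x(i)(l_1)=x(i)(l_2)$, whereas the observation asserts the conclusion under the weaker hypothesis of equality of the \emph{coded sets} $a^{x,l_1}_i=a^{x,l_2}_i$, and the inductive lemma
\begin{equation*}
    a^{x,l_1}_i=a^{x,l_2}_i \;\Longrightarrow\; x(i)(l_1)=x(i)(l_2)
\end{equation*}
is exactly what bridges that gap. The base case and inductive step both correctly exploit that condition (3) at level $i+1$ makes $x(i+1)(l)(\ndash)$ descend to a well-defined function on $A^x_{i+1}$, so that equality of the carved-out subsets forces equality of the coding sequences. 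The treatment of condition (2) is also right: the subset inclusion is automatic from the definition, and condition (2) supplies exactly the witness for non-emptiness. One small indexing caveat worth being aware of (not a gap in your argument): for $i=0$ the notation $a^{x,k}_0$ is not defined and one should read the enumeration as $\seqq{x(0)(k)}{k}$, in which case the observation's claim is literally condition (3) at index $1$ with no induction needed; your starting the induction at $i=1$ handles this correctly.
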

\begin{remark}
Note that $X_n$ is a dense $G_\delta$ subset of $((2^\N)^\N)^n$. For condition (3) this is true since for a dense $G_\delta$ set of $x\in ((2^\N)^\N)^n$, $x(i-1)(l_1)\neq x(i-1)(l_2)$, for $l_1\neq l_2$.
\end{remark}
\begin{defn}[Main definition: $F_n$ and $u^n_k$]\label{defn: main definition} \,\\
(1) For $n<\omega$, the equivalence relation $F_n$ is defined on $X_n$ by
\begin{equation*}
    x\mathrel{F_n}y\iff A^x_n=A^x_n.
\end{equation*}
The equivalence relation $F_\omega$ is defined on $X_\omega$ by
\begin{equation*}
    x\mathrel{F_\omega}y\iff \forall n<\omega(A^x_n=A^y_n).
\end{equation*}
(2) Given $m\leq n$, define $u^n_m\colon X_n\to X_m$ as the natural projection map to the first $m$ copies of $(2^\N)^\N$. Then $u^n_m\colon F_n\to_B F_m$ is a Borel homomorphism. For $n<\omega$, the homomorphism $u^n_{n-1}$ can be seen as the union map, sending the set $A^x_n$ to its union $A^x_{n-1}=A^{u^n_{n-1}(x)}_{n-1}$.
\end{defn}
The map $x\mapsto A^x_n$ can be seen as a reduction of $F_n$ to $=^{+n}$. Specifically, there is a Borel map from $X_n$ to $(2^\N)^{\omega^n}$ (the domain of $=^{+n}$), sending each $x\in X_n$ to some $z\in(2^\N)^{\omega^n}$ ``enumerating'' the set $A^x_n$.


\subsection{Group action}\label{subsection : orbit ER presentation}
In this section we present the equivalence relations $F_n$ as orbit equivalence relation, when restricted to a (large) subdomain.\footnote{In this section, and throughout the paper, we use colors for emphasis and clarification. The reader is advised to view these pages in color.}

Consider the Polish group $S_\infty$ of all permutations of $\N$, with its natural action $a\colon S_\infty \,{\color{blue}\curvearrowright}\, (2^\N)^{\color{blue}\N}$, permuting the sequence of reals. The induced orbit equivalence relation on $(2^\N)^\N$ is not $=^+$, but is Borel bireducible with $=^+$ (see \cite[Exercise 8.3.4]{Gao09}). The two equivalence relations in fact agree on the comeager set of all injective sequences of reals.

Consider also the natural action $b_0\colon S_\infty\curvearrowright 2^\N$, permuting binary sequence. Let $b\colon S_\infty \,{\color{blue}\curvearrowright}\, (2^{\color{blue}\N})^\N$ be the diagonal action, $g\cdot_b (x_n)_n=(g\cdot_{b_0}x_n)_n$.
Recall the definition of $F_2$ on $(2^\N)^\N\times (2^\N)^\N$. When permuting a sequence of reals (the first coordinate) using $a$, the action of $b$ on the second coordinate updates the binary sequences, so that they still carve out the same subset of reals as before. To recover all the symmetries of $F_2$, we also want to allow $S_\infty$ to act via $a$ on the second coordinate.

Note that the actions $a$ and $b$ on $(2^\N)^\N$ commute, and so give rise to the product action $c=(b,a)$ of the product group $S_\infty\times S_\infty$
\begin{equation*}
        c\colon {\color{red}S_\infty}\times {\color{blue}S_\infty} \curvearrowright (2^{\color{red}\N})^{\color{blue}\N}
\end{equation*}
Define an action
\begin{equation*}
    a_2\colon {\color{red}S_\infty}\times {\color{blue}S_\infty} \curvearrowright (2^\N)^{\color{red}\N}\times (2^{\color{red}\N})^{\color{blue}\N}
\end{equation*}
by 
\begin{equation*}
 (g,h)\cdot_{a_2}x=(g\cdot_a x(0), (g,h)\cdot_c x(1)).   
\end{equation*}

The corresponding orbit equivalence relation agrees with $F_2$ on the large, comeager set, of all $x\in (2^\N)^\N\times (2^\N)^\N$ for which $x(0)$ is an injective enumeration of $A^x_1$ and $\seqq{a^{x,l}_1}{l\in\N}$ is an injective enumeration of $A^x_2$ (recall the definitions from Section~\ref{section: introduction}). More generally:

\begin{defn} For $n\leq\omega$ define
\begin{equation*}
    a_n\colon (S_\infty)^n\curvearrowright ((2^\N)^\N)^n
\end{equation*}
so that for $g\in(S_\infty)^n$ and $x\in ((2^\N)^\N)^n$
\begin{equation*}
    (g\cdot_{a_n}x)(k+1)=(g(k),g(k+1))\cdot_c x(k+1)
\end{equation*}
and $(g\cdot_{a_n} x)(0)=g(0)\cdot_a x(0)$.
\end{defn}
Let $X_n^{\mathrm{inj}}$ be the set of all $x\in X_n$ so that $x(0)$ is an injective enumeration of $A^1_x$ and for each $k<n$, $\seqq{a^{x,l}_k}{l\in\N}$ is an injective enumeration of $A^x_{k+1}$.
\begin{claim}
For each $n\leq \omega$.
\begin{enumerate}
    \item $X_n^{\mathrm{inj}}$ is a comeager subset of $X_n$, and is $a_n$-invariant.
    \item On $X_n^{\mathrm{inj}}$ the orbit equivalence relation induced by $a_n$ is $F_n$.
\end{enumerate}
\end{claim}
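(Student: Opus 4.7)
The plan is to handle the two assertions in parallel across $n$, reducing everything to routine Baire-category considerations at each coordinate plus an inductive application of the injectivity in part (2).

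For comeagerness in part (1), the natural approach is coordinate-by-coordinate, observing that each failure of injectivity is controlled by a single coordinate. Concretely, for distinct $l_1,l_2\in\N$, the set $\{x(0)(l_1)\neq x(0)(l_2)\}$ is dense open in the first factor, so $x(0)$ is injective on a comeager set. Inductively, fix $x(0),\dots,x(k-1)$ so that $\seqq{a^{x,l}_{k-1}}{l\in\N}$ is an injective enumeration of $A^x_k$. Then the condition $a^{x,l_1}_k\neq a^{x,l_2}_k$ reduces, via this injectivity, to the existence of some $j$ with $x(k)(l_1)(j)\neq x(k)(l_2)(j)$, which is open dense in the $k$-th factor. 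A Kuratowski--Ulam argument (or a direct countable intersection of open dense subsets of $X_n$, since $X_n$ itself is dense $G_\delta$) then yields that $X_n^{\mathrm{inj}}$ is comeager in $X_n$.

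For $a_n$-invariance, my plan is a direct calculation with the action. Unwinding the definitions of $a$, $b$, and $c=(b,a)$, one obtains $(g\cdot_{a_n} x)(0)(l)=x(0)(g(0)^{-1}(l))$ and $(g\cdot_{a_n} x)(k+1)(l)(j)=x(k+1)(g(k+1)^{-1}(l))(g(k)^{-1}(j))$. From the first formula, $A^{g\cdot x}_1=A^x_1$ and injectivity of $(g\cdot x)(0)$ is preserved. Substituting into the definition of $a^{g\cdot x,l}_1$ and performing the change of variables $j\mapsto g(0)^{-1}(j)$ gives $a^{g\cdot x,l}_1=a^{x,g(1)^{-1}(l)}_1$, which yields $A^{g\cdot x}_2=A^x_2$ and preserves injectivity of the enumeration. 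An inductive iteration establishes $a^{g\cdot x,l}_k=a^{x,g(k)^{-1}(l)}_k$ and $A^{g\cdot x}_k=A^x_k$ for all relevant $k$, proving invariance and also the easy direction of (2).

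For the nontrivial direction of (2), given $x,y\in X_n^{\mathrm{inj}}$ with $x\mathrel{F_n}y$, we have $A^x_k=A^y_k$ for all applicable $k$ by the Observation. I would construct $g\in(S_\infty)^n$ level by level: let $g(0)$ be the unique permutation with $y(0)(l)=x(0)(g(0)^{-1}(l))$ (exists and is unique because both $x(0)$ and $y(0)$ are injective enumerations of $A^x_1=A^y_1$); then let $g(k)$ be the unique permutation with $a^{y,l}_k=a^{x,g(k)^{-1}(l)}_k$, using that both enumerations of $A^x_{k+1}=A^y_{k+1}$ are injective. The main verification, and the only step that requires care, is that the $g(k)$'s chosen independently at each level are automatically compatible, i.e.\ that $y(k+1)=(g(k),g(k+1))\cdot_c x(k+1)$; this follows from matching the two descriptions of the set $a^{y,l}_{k+1}=a^{x,g(k+1)^{-1}(l)}_{k+1}$ pointwise against the injective enumerations $\seqq{a^{x,j}_k}{j}$ and $\seqq{a^{y,j}_k}{j}$, which the inductive hypothesis already aligns via $g(k)$.

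I expect this last compatibility check between consecutive levels to be the only subtle point; the comeagerness and the easy direction are straightforward, and the induction for the hard direction runs cleanly through all $k<n$, including the limit $n=\omega$ since each $g(k)$ is defined using only finitely many coordinates and the resulting $g\in(S_\infty)^\omega$ acts coordinatewise.
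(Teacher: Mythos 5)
Your proof is correct, and since the paper leaves this claim unproved as routine, there is no written proof to compare against; your argument is the natural one implicit in the paper's definitions. All four pieces check out: (i) the comeagerness follows from the observation (which you state) that if the enumeration $\seqq{a^{x,j}_{k-1}}{j}$ of $A^x_k$ is injective, then $a^{x,l_1}_k = a^{x,l_2}_k$ iff $x(k)(l_1)=x(k)(l_2)$ as binary sequences, so $X_n^{\mathrm{inj}}$ contains the comeager set where each $x(k)$ is injective; (ii) the change-of-variables computation $a^{g\cdot x,l}_{k} = a^{x,g(k)^{-1}(l)}_{k}$ is exactly right and gives both invariance and the easy direction of (2); (iii) for the hard direction, building $g(k)$ level by level by matching the two injective enumerations of $A^x_{k+1}=A^y_{k+1}$ is well-posed, and the compatibility $y(k+1)=(g(k),g(k+1))\cdot_c x(k+1)$ does fall out of comparing the two descriptions of $a^{y,l}_{k+1}$ against the injective enumeration of $A^x_{k+1}$ (the base case $k=0$ with $g(0)$ works too, just with the reals $x(0)(j)$ in place of $a^{x,j}_{k}$). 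One small thing worth adding for completeness: invariance of $X_n^{\mathrm{inj}}$ also requires that $g\cdot_{a_n}x\in X_n$, i.e.\ that conditions (1)--(3) of Definition~\ref{defn : domain of Fn} are preserved by the action; this follows by the same change-of-variables substitutions you already perform, but you should say so explicitly rather than only checking preservation of injectivity.
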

\begin{remark}
The group action provides another point of view that the presentation $F_n$ is better behaved than $=^{+n}$. For example, $=^{+2}$, defined on $((2^\N)^\N)^\N$ as the Friedman-Stanley jump of $=^+$, is naturally induced (on a subdomain) by an action of the infinite support wreath product group $S_\infty\wr S_\infty$. This group is defined as the semi-direct product $S_\infty \ltimes (S_\infty)^\N$ with the natural permutation action of $S_\infty\,{\color{blue}\curvearrowright}\, (S_{\infty})^{\color{blue}\N}$. Similarly, the higher jumps $=^{+n}$ can be presented (on a subdomain) as an orbit equivalence relation induced by a natural action of an iterated wreath product of $S_\infty$. (See~\cite[Proposition~2.3]{CC19} for example, where variations of the Friedman-Stanley jump are considered.)
\end{remark}

\subsection{The spectrum of the meager ideal}\label{subsection : spectrum of the meager ideal}

\begin{defn}[Kanovei-Sabok-Zapletal~{\cite[Definition~1.16]{ksz}}\footnote{Kanovei, Sabok, and Zapletal studied the behavior of equivalence relations on $I$-positive sets for various ideals $I$. Here we only mention the case where $I$ is the meager ideal.}]
An analytic equivalence relation $E$ is {\bf in the spectrum of the meager ideal} if there is an equivalence relation $F$ on a Polish space $Y$ so that
\begin{itemize}
    \item $E$ and $F$ are Borel bireducible;
    \item For any non-meager set $C\subset Y$, $F\restriction C$ is Borel bireducible with $F$.
\end{itemize}
For $F$ as in the second bullet, we say that {\bf $F$ retains its complexity on non-meager sets}.
\end{defn}
Kanovei, Sabok, and Zapletal~\cite{ksz} concluded from Theorem~\ref{thm;ksz-thm-6-24} that $=^+$, on $\R^\N$, retains its complexity on non-meager sets, and is therefore in the spectrum of the meager ideal. The higher jumps \emph{do not} retain their complexity, with the topology coming from the jump operation.

\begin{claim}\label{claim: =++ reduction on comeager}
    There is a comeager set $C$ so that $({=^{+2}}\restriction C)\leq_B{=^+}$.
\end{claim}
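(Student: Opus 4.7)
The plan is to exhibit a Borel reduction defined on the comeager set $C \subset ((2^\N)^\N)^\N$ consisting of those $x$ for which the coordinates $x(n)(m)$, $(n,m)\in\N^2$, are pairwise distinct; this $C$ is visibly dense $G_\delta$ and $=^{+2}$-invariant. For $x\in C$, the sets $A_n^x := \{x(n)(m) : m\in\N\}$ are infinite and pairwise disjoint, so that $\{A_n^x : n\in\N\}$ is a bona fide countable partition of $\bigcup_n A_n^x$. The key observation is that on $C$ this partition is completely encoded by the single binary relation
\[
R_x := \bigcup_{n\in\N} A_n^x \times A_n^x \;\subset\; \R\times\R,
\]
whose restriction to $\bigcup_n A_n^x$ is an equivalence relation with classes precisely the $A_n^x$. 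Thus on $C$, passing from $\{A_n^x : n\}$ to $R_x$ loses no information, whereas for a general $x\in ((2^\N)^\N)^\N$ the naive union operation does lose information.

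Concretely, I would fix a Borel injection $\oplus\colon \R\times\R\to\R$ and a bijection $\langle\cdot,\cdot,\cdot\rangle\colon \N^3\to\N$, and define $f\colon C\to\R^\N$ by
\[
f(x)(\langle n,i,j\rangle) := x(n)(i)\oplus x(n)(j).
\]
The map $f$ is continuous, and the set $\{f(x)(k) : k\in\N\}$ is precisely the $\oplus$-image of $R_x$; in particular $f(x)\mathrel{=^+} f(y)$ if and only if $R_x = R_y$.

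The verification that $f$ reduces ${=^{+2}}\restriction C$ to $=^+$ then splits into two straightforward checks. For the forward direction, if $x\mathrel{=^{+2}} y$ then $\{A_n^x : n\} = \{A_n^y : n\}$ as sets of subsets of $\R$, hence $R_x = R_y$, hence $\{f(x)(k)\} = \{f(y)(k)\}$ and so $f(x)\mathrel{=^+} f(y)$. For the reverse direction, if $x,y\in C$ and $f(x)\mathrel{=^+} f(y)$, then by injectivity of $\oplus$ we obtain $R_x = R_y$ as subsets of $\R^2$; pairwise disjointness on $C$ makes the nontrivial equivalence classes of $R_x$ (on its support) exactly $\{A_n^x : n\in\N\}$ and similarly $\{A_n^y : n\in\N\}$, so the partitions coincide and $x\mathrel{=^{+2}} y$.

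No substantial obstacle is expected; the construction is entirely explicit, and the only genuine content is the enumeration-invariant encoding of each $A_n^x$ by its full Cartesian square $A_n^x\times A_n^x$. This is precisely the device that lets the otherwise lossy ``union'' operation be inverted on the comeager set $C$, and is the reason the example works while respecting both the inner $S_\infty$-symmetry (permuting the $m$'s within each row) and the outer $S_\infty$-symmetry (permuting the $n$'s) of $=^{+2}$.
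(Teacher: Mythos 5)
Your proof is correct and is essentially the same as the paper's: both encode each block $A^x_n$ by enumerating all ordered pairs within it, so that the resulting countable set of pairs is the graph of an equivalence relation from which the partition $\{A^x_n\}_n$ can be recovered on the comeager set $C$ of injective-in-all-coordinates arrays. The only cosmetic difference is that you compress pairs into $\R$ via a Borel injection $\oplus$ to land directly in $=^+$ on $\R^\N$, whereas the paper lands in $(=_{\R\times\R})^+$ and then invokes ${=_\R}\sim_B{=_{\R\times\R}}$.
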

\begin{proof}
Recall that $=^{+2}$ is defined on the space $(\R^\N)^\N$. Let $C\subset (\R^\N)^\N$ be the set of all $x\in (\R^\N)^\N$ so that for any $n,m,l,k\in\N$, $(n,m)\neq (l,k)\implies x(n)(m)\neq x(l)(k)$. $C$ is a comeager subset of $(\R^\N)^\N$. 

Define $g\colon C\to (\R\times\R)^{\N^3}$ by $g(x) = \seqq{(x(n)(m),x(n)(k))}{n,m,k\in\N}$. Fix a bijection $e\colon \N\to \N^3$, which extends naturally to a homeomorphism $\hat{e}\colon (\R\times\R)^{\color{blue}\N^3}\to(\R\times\R)^{\color{blue}\N}$. Define $f\colon C\to(\R\times\R)^\N$ by $f = \hat{e}\circ h$.

For $x\in D$, the sets $\set{x(n)(m)}{m\in\N}$ are disjoint for different values of $n$. $f(x)$ is an enumeration of the equivalence relation partitioning the set of reals $\set{x(n)(m)}{n,m\in\N}$ into the sets $\set{\set{x(n)(m)}{m\in\N}}{n\in\N}$. It follows that $f$ is a reduction of $({=^{+2}}\restriction C)$ to ${(=_{\R\times\R})^+}$. Since ${=_\R} \sim_B {=_{\R\times\R}}$, we conclude that $({=^{+2}}\restriction C)\leq_B{=^+}$, as required. 
\end{proof}

\begin{thm}\label{Theorem : omega FS jumps in specturm}
For each $1\leq n\leq\omega$, $=^{+n}$ is in the spectrum of the meager ideal.
\end{thm}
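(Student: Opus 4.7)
Take $F_n$ itself as the witness. Since $F_n \sim_B {=^{+n}}$ is established, the first bullet of the spectrum definition holds, and it remains to show that $F_n \restriction C \sim_B F_n$ for every non-meager Borel $C \subset X_n$. The direction $F_n \restriction C \leq_B F_n$ is immediate by inclusion, so the content is $F_n \leq_B F_n \restriction C$.

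First handle the comeager case. If $C$ is comeager in $X_n$, then $F_n \leq_B F_n \restriction C$ is precisely the statement that $F_n$ retains its complexity on comeager sets, i.e.\ Theorem \ref{thm: warmup proof retains complexity on comeager sets}, proved in Section \ref{section: complexity on comeager sets} as the main application of Theorem \ref{Theorem : Main}. I would take this as a black box here.

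To pass from the comeager to the general non-meager case, use the Baire property of $C$ to fix a nonempty basic open $U \subset X_n$ in which $C \cap U$ is comeager. It then suffices to construct a Borel reduction $\iota \colon X_n \to U$ of $F_n$ to $F_n \restriction U$ that preserves Baire category, in the sense that the $\iota$-preimage of a comeager-in-$U$ set is comeager in $X_n$ — for instance, by arranging $\iota$ to be a homeomorphism onto an open subset of $U$. Then $\iota^{-1}(C \cap U)$ is comeager in $X_n$, the comeager case yields $F_n \leq_B F_n \restriction \iota^{-1}(C \cap U)$, and composing with $\iota$ gives $F_n \leq_B F_n \restriction (C \cap U) \leq_B F_n \restriction C$.

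The construction of $\iota$ is a direct homogeneity argument from the combinatorial definition of $X_n$: a basic open $U$ prescribes the values $x(i)(k)(m)$ for finitely many triples $(i,k,m)$, and one embeds $X_n$ into $U$ by filling the prescribed coordinates with fixed ``tagged'' values and encoding an arbitrary $x \in X_n$ in the remaining coordinates. The tags are chosen so that the induced sets $A^{\iota(x)}_i$ split as a fixed prescribed piece together with a canonically isomorphic copy of $A^x_i$, ensuring that $\iota$ is a reduction (not merely a homomorphism) and that condition (3) of Definition \ref{defn : domain of Fn} is preserved. The main obstacle in this proof is the comeager case — Theorem \ref{thm: warmup proof retains complexity on comeager sets} — which carries the full weight of Theorem \ref{Theorem : Main}; the Baire-localization step and the construction of $\iota$ are comparatively routine manipulations of the combinatorial definition of $F_n$.
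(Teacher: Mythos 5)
Your division of labor is reasonable: the comeager case (Theorem~\ref{thm: warmup proof retains complexity on comeager sets}) is the hard core, and the paper does also treat it as such. But the localization step you propose has a genuine gap, and the missing idea is precisely the group-action presentation of $F_n$ from Section~\ref{subsection : orbit ER presentation}. The map $\iota\colon X_n\to U$ you sketch cannot be category-preserving. For $\iota$ to be a reduction, the ``fixed prescribed piece'' that you adjoin to $A^{\iota(x)}_1$ must consist of specific, fixed reals (so that they are recognizable and can be stripped off), and similarly at the higher levels. But a basic open set $U$ only fixes finitely many \emph{bits}, not whole reals, so the requirement that certain coordinates of $\iota(x)$ equal specific reals forces $\mathrm{image}(\iota)$ to be a closed nowhere dense subset of $U$. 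A comeager subset of $U$ can then be disjoint from $\mathrm{image}(\iota)$, making $\iota^{-1}(C\cap U)$ empty rather than comeager. Nor is this easily patched: a homeomorphism of $X_n$ onto an open subset of $U$ which is simultaneously a reduction of $F_n$ is in structural tension with itself, and I do not believe such a map exists; in any case it is not the ``routine manipulation'' you describe.

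What fixes this is the action of $G=(S_\infty)^n$ on $X_n^{\mathrm{inj}}$. Given non-meager $Z\subset X_n^{\mathrm{inj}}$ and countable dense $G_0\subset G$, the set $Z'=G_0\cdot Z$ is comeager (because the generic $G$-orbit is dense), and on $Z'$ one can Borel-select some $g\in G_0$ with $g^{-1}\cdot x\in Z$; the map $x\mapsto g^{-1}\cdot x$ then sends $Z'$ into $Z$ with $g^{-1}\cdot x\mathrel{F_n}x$, hence is automatically a reduction of $F_n\restriction Z'$ to $F_n\restriction Z$, and composing with the comeager case gives $F_n\leq_B F_n\restriction Z$. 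The paper packages this slightly differently: it uses the Vaught transform and large-section uniformization to produce a Borel homomorphism $f\colon F_n\to_B F_n\restriction Z$ which is a reduction on a comeager set, notes that $f$ cannot factor through any $u^n_k$ on a comeager set (since $F_n$-classes are meager, no $u^n_k$ is a reduction on a non-meager set), and then invokes the dichotomy of Theorem~\ref{Theorem : Main} directly. Either way, the essential move is translating $x$ along its $F_n$-orbit into $Z$, not embedding $X_n$ into a basic open set.
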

Since ${F_n} \sim_B {=^{+n}}$, it suffices to prove the following.
\begin{prop}
$F_n$ on $X_n$ retains its complexity on non-meager sets.
\end{prop}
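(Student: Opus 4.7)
I fix a non-meager Borel set $C\subseteq X_n$. The direction $F_n\restriction C\leq_B F_n$ is immediate via inclusion, so the real task is to produce a Borel reduction $F_n\leq_B F_n\restriction C$. The plan is to first handle the case where $C$ is comeager in $X_n$, using Theorem~\ref{Theorem : Main} directly, and then reduce the general non-meager case to the comeager case by localizing to a basic open set $V\subseteq X_n$ in which $C$ is comeager.

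\textbf{The comeager case.} Suppose $C$ is comeager in $X_n$. I would apply Theorem~\ref{Theorem : Main} with $E=F_n\restriction C$, which is classifiable by countable structures since $F_n$ is. The dichotomy gives either $F_n\leq_B F_n\restriction C$ (as desired), or else every Borel homomorphism $f\colon F_n\to_B F_n\restriction C$ factors on a comeager set through some $u^n_k$, $k<n$. To rule out the second alternative, take $f$ to be (a Borel total extension of) the inclusion $\iota\colon C\hookrightarrow C$, viewed as a Borel homomorphism from $F_n$ to $F_n\restriction C$ defined on the comeager set $C$. If $\iota$ factored as $h\circ u^n_k$ modulo $F_n$ on a comeager $D\subseteq X_n$, then any pair $x,x'\in D$ with $u^n_k(x)=u^n_k(x')$ would satisfy $x\mathrel{F_n}x'$. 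By Kuratowski-Ulam, comeagerly many $u\in X_k$ have fibre $D_u=\{x\in D:u^n_k(x)=u\}$ comeager in the corresponding fibre of $u^n_k$; on such a fibre, varying the coordinate $x(k)$ alone generically alters the set $A^x_{k+1}$ and hence $A^x_n$, so the fibre meets many distinct $F_n$-classes—a contradiction.

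\textbf{The non-meager case.} For general non-meager Borel $C$, I would pick a non-empty basic open $V\subseteq X_n$ in which $C$ is comeager, and construct a Borel map $\Phi\colon X_n\to V$ that is a Borel reduction of $F_n$ to $F_n\restriction V$. Concretely, $\Phi$ prepends the finite basic data specifying $V$ to each coordinate of $x\in X_n$, with a suitable index shift; the prepended entries are chosen generically, so that they do not collide with the sets $A^x_k$ of a generic $x$. Using Vaught-transform techniques from Section~\ref{Section: permutations} and the action $a_n$ from Section~\ref{subsection : orbit ER presentation}, one verifies that $\Phi$ is a Borel reduction on a comeager subset of $X_n$ with image comeager in $V$. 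Then $\Phi^{-1}(C)$ is comeager in $X_n$, and applying the comeager case to $\Phi^{-1}(C)$ and composing with $\Phi$ yields the required reduction $F_n\leq_B F_n\restriction C$.

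\textbf{Main obstacle.} The chief difficulty will be in the comeager case, specifically the verification that the inclusion cannot factor through $u^n_k$ for any $k<n$. This step exploits the specific design of $F_n$—its coordinates $x(k),\ldots,x(n-1)$ genuinely encode independent layers of information beyond what $u^n_k(x)$ records—and uses Kuratowski-Ulam to transfer genericity between $X_n$ and the fibres of $u^n_k$. The non-meager step is more technical but routine: the key issue is the careful choice of prepended data so that $\Phi$ is a reduction rather than merely a homomorphism, which is handled using the Vaught transforms.
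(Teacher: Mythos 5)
Your top-level strategy — apply Theorem~\ref{Theorem : Main} with $E=F_n\restriction C$, exhibit a homomorphism that cannot factor through any $u^n_k$, and conclude via the dichotomy — is exactly the paper's. The Kuratowski-Ulam argument you give for why $u^n_k$ cannot be a reduction on a non-meager set is also essentially the paper's observation (2). However, your execution diverges from the paper in a way that introduces real gaps.

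\textbf{The comeager case has a fixable gap.} ``A Borel total extension of the inclusion $\iota\colon C\hookrightarrow C$'' is not automatically a homomorphism from $F_n$ to $F_n\restriction C$. If you extend by a constant outside $C$, then for $x\in C$ and $y\notin C$ with $x\mathrel{F_n}y$ the values $f(x)=x$ and $f(y)=\mathrm{const}$ need not be $F_n$-related, since $C$ need not be $F_n$-invariant. The paper avoids this by never using the raw inclusion: it builds $f$ so that on an $F_n$-invariant comeager set $B$ one has $f(x)=g\cdot x\in C$ for a Borel-selected group element $g=g(x)\in (S_\infty)^n$ (using the orbit-equivalence presentation of $F_n$, the Vaught transform, and large-section uniformization), and $f$ is constant on the complement of $B$. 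Invariance of $B$ is what makes $f$ a genuine homomorphism, and $f(x)\mathrel{F_n}x$ on $B$ is what makes it a reduction on a comeager set.

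\textbf{The non-meager case is the real problem, and it is not routine.} The map $\Phi$ you describe, which prepends fixed finite data to each coordinate, has image contained in the closed nowhere-dense subset of $V$ where those coordinates are literally equal to the prepended values — not a comeager subset of $V$. Consequently $\Phi^{-1}(C)$ need not be comeager. You propose to repair this ``using Vaught-transform techniques,'' but the action of $(S_\infty)^n$ does not preserve the basic open set $V$, so translating $\Phi(x)$ by group elements to fill up $V$ takes you out of $V$. Producing a Borel reduction of $F_n$ onto a comeager subset of $V$ is precisely what Theorem~\ref{thm: warmup proof retains complexity on comeager sets} does, and its proof occupies all of Section~\ref{section: complexity on comeager sets} using the generic construction of Section~\ref{section: the main construction}; it is not a side remark. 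So your non-meager case would, if made rigorous, end up redoing that whole development inside a step you are labelling as routine.

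\textbf{What the paper does instead.} The paper handles a non-meager $Z$ directly and uniformly: restrict to the invariant comeager set $X_n^{\mathrm{inj}}$, pass to $Z'=G_0\cdot Z$ (comeager because $Z$ is non-meager and $G_0$ is a countable dense subgroup acting with dense orbits), set $B=\set{x}{\forall^\ast g\ (g\cdot x\in Z')}$, and use Borel large-section uniformization plus the dense subgroup to produce a Borel $f$ with $f(x)\in Z$ and $f(x)\mathrel{F_n}x$ for $x\in B$, constant off $B$. This $f$ is a homomorphism that is a reduction on a non-meager (indeed comeager) set, so it cannot factor through any $u^n_k$, and Theorem~\ref{Theorem : Main} yields $F_n\leq_B F_n\restriction Z$. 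Splitting into comeager and non-meager cases, as you propose, is unnecessary and only makes the problem harder.
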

\begin{proof}
First we make the following two observations.
\begin{enumerate}
    \item For each $n\leq\omega$, each $F_n$ class is meager in $X_n$.
    \item For any $k<n\leq\omega$, $u^n_k$ is not a reduction on any non-meager set.
\end{enumerate}
Fix a non-meager set $Z\subset X_n$. We prove that ${F_n}\leq_B{F_n\restriction Z}$. First, we claim that there is a Borel homomorphism $f\colon F_n\to_B F_n\restriction Z$ which is a reduction on a non-meager set. For such $f$, the second bullet of Theorem~\ref{Theorem : Main} fails: if $f$ factors through $u^n_k$ on a comeager set, for $k<n$, then it would follow that $u^n_k$ is a reduction on a non-meager set, contradicting (2) above. 
We then conclude, by Theorem~\ref{Theorem : Main}, that $F_n$ is Borel reducible to $F_n\restriction Z$.

Note that if $Z$ is $F_n$-invariant, it is easy to find a homomorphism $f$ as claimed: simply let $f$ be the identity on $Z$, and a constant function outside of $Z$. In general, we can find such a homomorphism using large section uniformization, as follows. We will use below category quantifiers and the Vaught transform. See \cite[8.J]{Kechris-DST-1995} and \cite[3.2]{Gao09}.

Recall that, once restricted to a comeager invariant set $X_n^{\mathrm{inj}}$, $F_n$ can be presented as an orbit equivalence relation induced by a continuous action of the Polish group $G=(S_\infty)^n$ (see Section~\ref{subsection : orbit ER presentation}). We may assume that $Z\subset X_n^{\mathrm{inj}}$. 
Fix a countable dense set $G_0\subset G$. Since almost every orbit is dense, the set $Z'= G_0\cdot Z$ is comeager. 
For any $g\in G$, $g^{-1}Z'$ is comeager, that is, $\forall^\ast x\in X (g\cdot x\in Z')$. We conclude that $\forall^\ast x\in X\,\forall^\ast g\in G(g\cdot x\in Z')$. That is, the invariant set $B=\set{x}{\forall^\ast g\in G(g\cdot x\in Z')}$ is comeager. By \cite[Theorem~18.6]{Kechris-DST-1995} there is a Borel map $h\colon B\to G$ so that $h(x)\cdot x\in Z'$ for all $x\in B$.

Finally, define $f\colon X\to Z$ as follows. Fix $z_0\in Z$ and an enumeration $(\gamma_n)_{n\in\mathbb{N}}$ of $G_0$. If $x\in X\setminus B$, $f(x)=z_0$. If $x\in B$, define $f(x)=\gamma_n\cdot h(x)\cdot x$ for the minimal $n$ so that $\gamma_n\cdot h(x)\cdot x\in Z$. Then $f$ is a Borel homomorphism as claimed.
\end{proof}

\begin{conj}
For each countable ordinal $\alpha$, $=^{+\alpha}$ is in the spectrum of the meager ideal.
\end{conj}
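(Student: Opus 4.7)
The plan is a transfinite induction on $\alpha$, constructing for each countable ordinal $\alpha$ an equivalence relation $F_\alpha \sim_B {=^{+\alpha}}$ on a Polish space $X_\alpha$ together with coherent Borel homomorphisms $u^\alpha_\beta\colon F_\alpha\to_B F_\beta$ for $\beta<\alpha$, extending the construction of Definition~\ref{defn: main definition}. At successor stages $\alpha=\beta+1$ one mimics the passage from $F_n$ to $F_{n+1}$: append a coordinate of type $(2^\N)^\N$ coding, via binary sequences, a countable family of subsets of $A^x_\beta$, subject to the coherence condition analogous to Definition~\ref{defn : domain of Fn}(3). At limit stages $\lambda$ one sets $X_\lambda\subset \prod_{\beta<\lambda}X_\beta$ to be the sequences which are coherent under the previously constructed $u^\alpha_\beta$, and compares the full sequences of invariants $(A^x_\beta)_{\beta<\lambda}$. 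A Polish group action at stage $\alpha$ can then be built by iterating the template of Section~\ref{subsection : orbit ER presentation}, taking inverse limits of the actions $\prod_{\beta<\alpha}(S_\infty)$ at limit stages.

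The core of the argument is to extend the dichotomy Theorem~\ref{Theorem : Main} to all $F_\alpha$: for any $E$ classifiable by countable structures, either $F_\alpha\leq_B E$ or every Borel homomorphism $f\colon F_\alpha\to_B E$ factors through some $u^\alpha_\beta$, $\beta<\alpha$, on a comeager set. The key input is a transfinite version of Lemma~\ref{lemma: breaking down homomorphism}, producing a coherent diagram of equivalence relations $E_\beta$ together with $f_\beta\colon F_\beta\to_B E_\beta$ for $\beta<\alpha$ that jointly encode $f$. Once such a dichotomy is in hand, the conjecture follows exactly as in the proof of the proposition of Section~\ref{subsection : spectrum of the meager ideal}: each $F_\alpha$-class is meager in $X_\alpha$; no $u^\alpha_\beta$ can be a reduction on a non-meager set (by the inductive hypothesis applied to $F_\beta$ and the fact that its fibers are nontrivial); and for any non-meager $Z\subset X_\alpha$ a large-section uniformization, using the inductively constructed group action, produces a Borel homomorphism $F_\alpha\to_B F_\alpha\restriction Z$ which is a reduction on a non-meager set. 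The dichotomy then forces $F_\alpha\leq_B F_\alpha\restriction Z$.

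The main obstacle, and the most delicate point, is the analysis at limit stages. At a limit $\lambda$, the second alternative in the dichotomy asserts that $f$ factors through a single $u^\lambda_\beta$ with $\beta<\lambda$, whereas a priori a Borel homomorphism might exploit information cofinal in $\lambda$ without factoring through any bounded stage. Extending Lemma~\ref{lemma: breaking down homomorphism} across limits therefore requires showing that such ``cofinally supported'' homomorphisms already force $F_\lambda\leq_B E$; this is a transfinite analog of Question~\ref{question: breaking homomorphism} and appears to demand genuinely new ideas beyond the classifiable-by-countable-structures toolkit used in the paper. A secondary difficulty is technical: at limit stages the acting Polish group is no longer a countable product of copies of $S_\infty$ but an inverse limit of such products, so the Vaught-transform machinery of Section~\ref{Section: permutations} must be recast in this setting, with continuity of the action, existence of generic orbits, and the Kuratowski--Ulam style computations all re-established. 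A successful handling of these two issues at limit stages, combined with the more routine successor step, should push the inductive construction through all countable ordinals and settle the conjecture.
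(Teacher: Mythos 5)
This statement is posed in the paper as an open \emph{conjecture} (it follows Theorem~\ref{Theorem : omega FS jumps in specturm}, which only covers $\alpha\leq\omega$), so there is no proof in the paper to compare against. What you have written is a plausible research plan, and to your credit you are explicit that it contains genuine gaps rather than presenting it as a completed argument. The two obstacles you single out are indeed the right ones to worry about: the successor step really is a routine imitation of the passage from $F_n$ to $F_{n+1}$, and the whole weight of the problem sits at limit stages, where one would need a transfinite analogue of Lemma~\ref{lemma: breaking down homomorphism}. Your observation that a Borel homomorphism from $F_\lambda$ might a priori depend cofinally on $\lambda$ without factoring through any bounded $u^\lambda_\beta$ is exactly where the argument must do new work. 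Note that the paper already handles the first limit, $\lambda=\omega$, precisely because the decomposition of Lemma~\ref{lemma: main proof breakdown of E} always lands at a \emph{finite} $k$; the question is whether the set-theoretic machinery (the Monro-style models $V(A_\alpha)$, Lemma~\ref{lemma: monro models subset of Mk is in Mkplus1}, and the pin-to-set translation) can be iterated transfinitely so that the invariant $B$ is forced into $V(A_\beta)$ for some bounded $\beta<\lambda$. That is a genuine open problem, not a gap one can paper over.

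One concrete inaccuracy: you say that at limit stages the acting Polish group becomes an inverse limit of products of $S_\infty$ and that the Vaught-transform machinery must be ``recast.'' This is not needed. For any countable ordinal $\alpha$, the group $\prod_{\beta<\alpha}S_\infty$ is simply a countable product of copies of $S_\infty$, hence already a Polish group with the product topology, and the diagonal/commuting actions of Section~\ref{subsection : orbit ER presentation} generalize without change of framework. The real difficulty is not topological-group-theoretic; it is the dichotomy itself, i.e.\ whether Lemma~\ref{lemma: breaking down homomorphism} holds past $\omega$. Until that is settled, the inductive scheme you propose cannot close, and the statement remains a conjecture.
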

\begin{question}
Is the spectrum of the meager ideal closed under
\begin{enumerate}
    \item the Friedman-Stanley jump operation;
    \item countable products.
\end{enumerate}
\end{question}
\subsection{A question of Clemens}\label{subsec: primeness}
In the context of definable cardinality of quotients of Polish spaces, a Borel homomorphism corresponds to a definable map between two such quotients, and a Borel reduction corresponds to an injective definable map.

\begin{defn}[Clemens~\cite{Clemens-primeness-2020}] Let $E$ and $F$ be Borel equivalence relations on Polish spaces $X$ and $Y$ respectively. Say that {\bf $E$ is prime to $F$} if for any Borel homomorphism $f\colon E\to_B F$, $E$ retains its complexity on a fiber, that is, there is $y\in Y$ so that $E$ is Borel reducible to $E\restriction \set{x\in X}{f(x)\mathrel{F}y}$.
\end{defn}
Primeness is a strong form of Borel-irreducibility, which holds between many pairs of benchmark equivalence relations (see~\cite[Theorem~1]{Clemens-primeness-2020}).

In the classical context of cardinality, primeness corresponds to a pigeonhole principle: any function $f\colon A\to B$ has a fiber of cardinality $|A|$. This is true if and only if the cardinality $|B|$ is strictly smaller than the cofinality of $|A|$. Recall that the cardinality $|A|$ is regular if it is equal to its cofinality, that is, if for any $|B|<|A|$, any function from $A$ to $B$ has a fiber of size $|A|$.

Following this analogy Clemens defined \textbf{regular} equivalence relation as follows.
\begin{defn}[Clemens~\cite{Clemens-primeness-2020}]\label{Definition : prime} 
A Borel equivalence relation $E$ is {\bf regular} if for any Borel equivalence relation $F$, if ${F}<_B{E}$ then $E$ is prime to $F$.
\end{defn}

Clemens~\cite[Question~7.3]{Clemens-primeness-2020} asked if $=^{+\omega}$ is regular. We confirm this. (Notational warning: what we call here $=^{+\omega}$ is denoted by $\mathbb{F}_\omega$ in \cite{Clemens-primeness-2020}.)
\begin{thm}\label{Theorem : =+omega prime}
For any equivalence relation $E$ which is classifiable by countable structures, either ${=^{+\omega}}\leq_B {E}$ or $=^{+\omega}$ is prime to $E$. In particular, $=^{+\omega}$ is regular.
\end{thm}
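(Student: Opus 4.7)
The plan is to combine the dichotomy of Theorem~\ref{Theorem : Main} with a Kuratowski--Ulam argument on fibers of $u^\omega_k$ and the spectrum-of-the-meager-ideal result of Theorem~\ref{Theorem : omega FS jumps in specturm}. Working with $F_\omega$ in place of $=^{+\omega}$, I take any Borel homomorphism $f\colon F_\omega\to_B E$ and apply the main dichotomy. If $F_\omega\leq_B E$ then $=^{+\omega}\leq_B E$ and the first alternative of the statement holds. Otherwise, there are some $k<\omega$, a comeager Borel $C\subseteq X_\omega$, and a Borel homomorphism $h\colon F_k\to_B E$ (defined on a comeager subset of $X_k$) with $f(x)\mathrel{E} h(u^\omega_k(x))$ for all $x\in C$. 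The goal is to locate a single $E$-class $[y]_E$ whose $f$-preimage is rich enough that $F_\omega$ Borel-reduces into it.

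The projection $u^\omega_k\colon X_\omega\to X_k$ is just restriction to the first $k$ coordinates of $((2^\N)^\N)^\omega$, so Kuratowski--Ulam applied to $C$ yields a comeager set of $y_0\in X_k$ for which $C_{y_0}:=C\cap(u^\omega_k)^{-1}(\{y_0\})$ is comeager in the Polish fiber $(u^\omega_k)^{-1}(\{y_0\})$; shrinking this comeager set further, I may also assume $h(y_0)$ is defined. Set $y:=h(y_0)$; then for every $x\in C_{y_0}$, $f(x)\mathrel{E} h(u^\omega_k(x))=h(y_0)=y$, so $C_{y_0}\subseteq \{x:f(x)\mathrel{E} y\}$. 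It therefore suffices to show $F_\omega\leq_B F_\omega\restriction C_{y_0}$.

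The technical heart is identifying the fiber $(u^\omega_k)^{-1}(\{y_0\})$ with a copy of $X_\omega$. Freezing the first $k$ coordinates at $y_0$ fixes the countable set $A^{y_0}_k$; enumerating it by $y_0(k-1)$ allows one to reinterpret the tail $(x(k),x(k+1),\dots)$ of a fiber point as a point of $X_\omega$, with subsets of $A^{y_0}_k$ now playing the role of the base sequences in $2^\N$. This produces a Borel homeomorphism $\phi\colon X_\omega\to (u^\omega_k)^{-1}(\{y_0\})$ under which $F_\omega$ corresponds to $F_\omega\restriction (u^\omega_k)^{-1}(\{y_0\})$: the equalities $A^x_n=A^{x'}_n$ for $n\leq k$ hold automatically on the fiber, and for $n>k$ they pull back via $\phi$ to the equalities defining $F_\omega$ at level $n-k$. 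Since $\phi$ is a homeomorphism, $\phi^{-1}(C_{y_0})$ is comeager in $X_\omega$, and Theorem~\ref{Theorem : omega FS jumps in specturm} then gives $F_\omega\leq_B F_\omega\restriction \phi^{-1}(C_{y_0})$; transporting across $\phi$ yields $F_\omega\leq_B F_\omega\restriction C_{y_0}$, establishing primeness.

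The regularity clause follows at once: if a Borel equivalence relation $G$ satisfies $G<_B{=^{+\omega}}$, then $G$ is Borel reducible to $=^{+\omega}$ and hence classifiable by countable structures, while $=^{+\omega}\not\leq_B G$, so the primeness clause just proved applies to $G$. I expect the main obstacle to be the explicit verification that $\phi$ respects the defining conditions (1)--(3) of Definition~\ref{defn : domain of Fn} and the equivalence relation $F_\omega$; restricting throughout to the comeager subdomain $X_\omega^{\mathrm{inj}}$ introduced in Section~\ref{subsection : orbit ER presentation}, where every enumeration is injective, should make this verification essentially mechanical.
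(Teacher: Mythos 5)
Your proposal is correct and follows essentially the same route as the paper: apply Theorem~\ref{Theorem : Main} to get the factoring through $u^\omega_k$, use Kuratowski--Ulam to locate a fiber $C_{y_0}$ that is comeager inside $(u^\omega_k)^{-1}(\{y_0\})$, identify that fiber with a shifted copy of $X_\omega$ via a homeomorphism, and then invoke Theorem~\ref{Theorem : omega FS jumps in specturm} to reduce $F_\omega$ into $C_{y_0}$. The only cosmetic difference is that the paper's $\phi$ is the bare index shift $\phi(z)(l)=z(l+k)$ (the ``reinterpretation'' via the enumeration of $A^{y_0}_k$ that you describe is what makes the shift respect $F_\omega$, and it does require $y_0$ to lie in the comeager injective set, which both you and the paper implicitly arrange).
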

\begin{proof}
Note that all the properties above respect Borel bireducibility. In particular, if $E\sim_B E'$ then $E$ is prime to $F$ if and only if $E'$ is prime to $F$. Therefore, it suffices to prove the theorem with $F_\omega$ instead of $=^{+\omega}$.

Fix $E$ as in the theorem and assume that $F_\omega$ is not Borel reducible to $E$. Let $f\colon F_\omega\to_B E$ be a Borel homomorphism. By Theorem~\ref{Theorem : Main} there is some $k<\omega$, a Borel homomorphism $g\colon F_k\to_B E$, defined on a comeager set, and a comeager $C\subset X_\omega$, so that for any $x\in C$,
\begin{equation*}
    g(u^\omega_k(x))\mathrel{E}f(x).
\end{equation*}
View $((2^\N)^\N)^\omega$ as $((2^\N)^\N)^k \times ((2^\N)^\N)^{\omega\setminus k}$. Recall that $u^\omega_k$ is the projection from $((2^\N)^\N)^k \times ((2^\N)^\N)^{\omega\setminus k}$ to $((2^\N)^\N)^k$. By the Kuratowski-Ulam theorem (see \cite[Theorem~8.41~(iii)]{Kechris-DST-1995}) there is $y\in ((2^\N)^\N)^k$ so that $C_y=\set{z\in ((2^\N)^\N)^{\omega\setminus k}}{(y,z)\in C}$ is comeager in $((2^\N)^\N)^{\omega\setminus k}$.
Note that $\{y\}\times C_y$ is contained in the fiber $\set{x\in X_\omega}{f(x)\mathrel{E}g(y)}$. We will finish the proof by showing that $F_\omega$ is Borel reducible to $F_\omega\restriction \{y\}\times C_y$.

Consider the homeomorphism $\phi\colon ((2^\N)^\N)^{\omega\setminus k}\to ((2^\N)^\N)^{\omega}$, defined by $\phi(z)(l)=z(l+k)$. Then for $z_1,z_2\in ((2^\N)^\N)^{\omega\setminus k}$, 
\begin{equation*}
    (y,z_1)\mathrel{F_\omega}(y,z_2)\iff \phi(z_1)\mathrel{F_\omega}\phi(z_2).
\end{equation*}
The set $\phi(C_y)$ is comeager in $((2^\N)^\N)^{\omega}$, as $\phi$ is a homeomorphism. Since $F_\omega$ retains its complexity of comeager sets, there is a Borel reduction $h\colon F_\omega\to F_\omega\restriction\phi(C_y)$. Finally, the map
\begin{equation*}
    x\mapsto (y,\phi^{-1}(h(x)))
\end{equation*}
is a Borel reduction of $F_\omega$ to $F_\omega\restriction \{y\}\times C_y$, as required.
\end{proof}
Clemens~\cite[Lemma~7.6]{Clemens-primeness-2020} showed that if $\alpha\geq 2$ is not of the form $\omega^\beta$, for some countable ordinal $\beta$, then $=^{+\alpha}$ is not regular.
\begin{question}[See {\cite[Question~7.3]{Clemens-primeness-2020}}]
For a countable ordinal $\beta$, is $=^{+\omega^\beta}$ regular?
\end{question}

As in Definition~\ref{Definition : prime}, Clemens defined an equivalence relation $E$ as \textbf{prime} if for any Borel equivalence relation $F$, either ${E}\leq_B{F}$ or $E$ is prime $F$. In the context of definable cardinality, when not every two sizes are comparable, this is a strengthening of being regular. A positive answer to Question~\ref{question: breaking homomorphism} will imply that $=^{+\omega}$ is prime.
\begin{question}
    Is $=^{+\omega}$ prime?
\end{question}

\subsection{Non-reduction to products}\label{subsec: products}
Given equivalence relations $E_k$ on $X_k$, the \textbf{product equivalence relation} $\prod_k E_k$ is defined on the space $\prod_k X_k$ by
\begin{equation*}
    x\,\mathrel{\prod_k E_k}\,y\iff x(k)\,\mathrel{E_k}\,{y(k)}\textrm{ for all }k.
\end{equation*}
We write $E^\N$ for the product $\prod_k E_k$ where $E_k=E$ for all $k$.
The product operation plays an important role in the study of Borel equivalence relations. For example, it follows from the dichotomy theorem proved by Hjorth and Kechris~\cite{Hjorth-Kechris-recent-developments-2001} that the equivalence relation $E_0^\N$, also known as $E_3$, is an immediate successor of $E_0$ with respect to $\leq_B$.
When studying jump operations on Borel equivalence relations, a product is often used to define the limit stages of iterated jumps. The definition of $=^{+\omega}$ as $\prod_n =^{+n}$ is one such example.
The following result shows that, for $n<\omega$, $=^{+n}$ cannot be presented as a product of strictly simpler equivalence relations.
\begin{prop}\label{prop: nonreduction for products}
Fix $n<\omega$. For $k<\omega$, let $E_k$ be an equivalence relation, classifiable by countable structures, so that ${=^{+n}}\not\leq_B{E_k}$. Then ${=^{+n}}\not\leq_B{\prod_k E_k}$.
\end{prop}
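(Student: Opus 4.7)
The plan is to apply the main dichotomy (Theorem~\ref{Theorem : Main}) coordinatewise and combine the resulting factorizations using the finiteness of $n$. Since ${F_n}\sim_B{=^{+n}}$, it suffices to show ${F_n}\not\leq_B\prod_k E_k$. Assume toward contradiction that $f\colon X_n\to\prod_k Y_k$ is a Borel reduction of $F_n$ to $\prod_k E_k$, where $Y_k$ is the domain of $E_k$. Each coordinate $f_k=\pi_k\circ f\colon X_n\to Y_k$ is a Borel homomorphism from $F_n$ to $E_k$, and since $F_n\not\leq_B E_k$ by hypothesis, Theorem~\ref{Theorem : Main} yields for each $k$ some $j_k<n$, a comeager $D_k\subset X_n$, and a Borel homomorphism $h_k\colon F_{j_k}\to_B E_k$ (defined on a comeager subset of $X_{j_k}$) satisfying $h_k(u^n_{j_k}(x))\mathrel{E_k} f_k(x)$ for every $x\in D_k$.

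The key pigeonhole step uses that $n<\omega$ and each $j_k<n$: writing $u^n_{j_k}=u^{n-1}_{j_k}\circ u^n_{n-1}$, I set $h'_k=h_k\circ u^{n-1}_{j_k}\colon F_{n-1}\to_B E_k$, a Borel homomorphism defined on a comeager subset of $X_{n-1}$ (by a standard Kuratowski-Ulam pullback argument for the projection $u^{n-1}_{j_k}$). Let $h=(h'_k)_k$. After further shrinking $\bigcap_k D_k$ to ensure $u^n_{n-1}$ takes values in the common (still comeager) domain of all the $h'_k$, I obtain a comeager set $D\subset X_n$ on which
\[
h(u^n_{n-1}(x))\mathrel{\prod_k E_k} f(x).
\]

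To finish, I will show that $u^n_{n-1}\restriction D$ is a Borel reduction of $F_n\restriction D$ to $F_{n-1}$, which is the desired contradiction via the fact---observation~(2) in the proof of the retention-of-complexity proposition in Section~\ref{subsection : spectrum of the meager ideal}---that $u^n_{n-1}$ is not a reduction on any non-meager subset of $X_n$. The forward direction $x\mathrel{F_n}y\implies u^n_{n-1}(x)\mathrel{F_{n-1}}u^n_{n-1}(y)$ holds because $u^n_{n-1}$ is a homomorphism. For the converse, suppose $x,y\in D$ and $u^n_{n-1}(x)\mathrel{F_{n-1}}u^n_{n-1}(y)$; then $h(u^n_{n-1}(x))\mathrel{\prod_k E_k}h(u^n_{n-1}(y))$ since $h$ is a homomorphism, hence $f(x)\mathrel{\prod_k E_k}f(y)$ by the displayed factoring, hence $x\mathrel{F_n}y$ as $f$ is a reduction.

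The only real obstacle I anticipate is the Kuratowski-Ulam bookkeeping needed to verify that all the pullbacks of comeager sets through the various projections remain comeager; this is routine. The conceptual heart of the argument---the collapse of all the levels $j_k<n$ to the single level $n-1$---depends crucially on $n<\omega$. The same strategy genuinely fails for $n=\omega$, where the $j_k$ may be unbounded; this is consistent with (and indeed forced by) the fact that $=^{+\omega}$ is only regular, not prime, via Theorem~\ref{Theorem : =+omega prime}.
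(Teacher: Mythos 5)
Your proof is correct and follows the same route as the paper's: decompose $f$ into its coordinates $f_k\colon F_n\to_B E_k$, apply Theorem~\ref{Theorem : Main} to each, use finiteness of $n$ to collapse all the levels $j_k<n$ to a common factoring through $u^n_{n-1}$, and conclude that $f$ cannot be a reduction. You merely spell out more carefully the two steps the paper elides (the Kuratowski-Ulam bookkeeping and the final appeal to the fact that $u^n_{n-1}$ is not a reduction on any non-meager set); there is no genuine difference in approach.
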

\begin{proof}
We may replace $=^{+n}$ by $F_n$. Note that a Borel homomorphism $f\colon F_n\to_B\prod_k E_k$ can be identified with a sequence of Borel homomorphisms $f_k\colon F_n\to_B E_k$. By Theorem~\ref{Theorem : Main}, each $f_k$ factors, on a comeager set, through $u^n_{n-1}$. It follows that $f$ factors through $u^n_{n-1}$ on a comeager set. In particular, a Borel homomorphism $f\colon F_n\to \prod_k E_k$ cannot be a reduction.
\end{proof}
This was proved (for all analytic equivalence relations) for $n=1$ by Kanovei, Sabok, and Zapletal~\cite[Corollary~6.30]{ksz}. The result is phrased there in terms of intersections of equivalence relations. Given equivalence relations $E_k$ on a common space $X$, let their intersection $\bigcap_k E_k$ be the equivalence relation on $X$ defined by    $x\,\mathrel{\bigcap_k E_k}\,y \iff x\mathrel{E_k}y\textrm{ for every }k$. 

There is a close relationship between products and intersections.
Note that $\prod_k E_k$ can be written as an intersection of equivalence relations $E'_k$ on $\prod_k X_k$ so that $E'_k\sim_B E_k$ for each $k$. Furthermore, the intersection $\bigcap_k E_k$ is Borel reducible to $\prod_k E_k$, witnessed by the diagonal map $X\to X^\N$, $x\mapsto\seq{x,x,\dots}$. Therefore Proposition~\ref{prop: nonreduction for products} is equivalent to a similar result for intersections:
\begin{cor}
Fix $n<\omega$ and a Polish space $X$. For $k<\omega$, let $E_k$ be an equivalence relation on $X$, classifiable by countable structures, so that ${=^{+n}}\not\leq_B{E_k}$. Then ${=^{+n}}\not\leq_B{\bigcap_k E_k}$.
\end{cor}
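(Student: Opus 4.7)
The plan is to derive the corollary as an immediate consequence of Proposition~\ref{prop: nonreduction for products}, by transferring the non-reduction statement from the product to the intersection via the diagonal embedding. All of the substantive work, namely the application of Theorem~\ref{Theorem : Main} to Borel homomorphisms into a product, has already been done in the proof of the proposition; no further category arguments or analysis of the $F_n$ should be needed here.

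First I would write down the diagonal map $\delta\colon X\to X^\N$, $\delta(x)=\seq{x,x,\dots}$, which is continuous. For $x,y\in X$, we have
\begin{equation*}
    x\mathrel{\textstyle\bigcap_k E_k}y\iff (\forall k)\,x\mathrel{E_k}y\iff (\forall k)\,\delta(x)(k)\mathrel{E_k}\delta(y)(k)\iff \delta(x)\mathrel{\textstyle\prod_k E_k}\delta(y),
\end{equation*}
so $\delta$ is a Borel (in fact continuous) reduction of $\bigcap_k E_k$ to $\prod_k E_k$.

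Next I would invoke Proposition~\ref{prop: nonreduction for products}: since each $E_k$ is classifiable by countable structures and satisfies ${=^{+n}}\not\leq_B{E_k}$, the proposition yields ${=^{+n}}\not\leq_B{\prod_k E_k}$. Here there is a tiny bookkeeping point in that the proposition is stated for equivalence relations on possibly different spaces $X_k$, but the proof goes through verbatim when all $X_k$ coincide with $X$.

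Finally, I would combine these: if ${=^{+n}}\leq_B{\bigcap_k E_k}$ were witnessed by some Borel map $f$, then $\delta\circ f$ would be a Borel reduction of $=^{+n}$ into $\prod_k E_k$, contradicting the previous step. Hence ${=^{+n}}\not\leq_B{\bigcap_k E_k}$, as required. There is no real obstacle at this stage, the corollary and the proposition are equivalent, as the paper already notes, so the deduction is a formality.
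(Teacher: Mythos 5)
Your proof is correct and follows exactly the route the paper indicates: use the diagonal map to Borel-reduce $\bigcap_k E_k$ to $\prod_k E_k$, then invoke Proposition~\ref{prop: nonreduction for products}. No substantive differences.
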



\subsection{Borel complexity}\label{subsec: Borel complexity}
The equivalence relations $=^{+n}$, $n<\omega$, are naturally written as $\mathbf{\Pi}^0_{2n+1}$ relations on their domains, where $=^+$ is $\mathbf{\Pi}^0_3$, and each application of the Freidman-Stanley jump operator adds an alternating $\forall \, \exists$ quantification. The equivalence relations $=^{+n}$ are in fact simpler, in terms of \textbf{potential complexity}~\cite{HKL98}. We refer the reader to \cite{Lou94} or \cite{HKL98} for the definition. An equivalent definition is: $E$ is \textbf{potentially} $\mathbf{\Gamma}$, for a point-class $\mathbf{\Gamma}$, if $E$ is Borel reducible to some equivalence relation $F$, where $F$ is in $\mathbf{\Gamma}$.

Hjorth, Kechris, and Louveau~\cite{HKL98} proved that the optimal potential complexity of $=^{+n}$ is precisely $\mathbf{\Pi}^0_{2+n}$. In fact, they proved that among $S_\infty$-actions $=^{+n}$ is a maximal equivalence relation with this potential complexity. Moreover, they extended these results for the transfinite jumps and completely classified the possible potential complexities of Borel equivalence relations induced by an $S_\infty$ action.

Here we simply note that the equivalence relations $F_n$, defined to optimize Baire-category considerations, naturally have the optimal potential complexity.
\begin{prop}
The relation $F_n$ is $\mathbf{\Pi}^0_{2+n}$ as a subset of $X_n\times X_n$.
\end{prop}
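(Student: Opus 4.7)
The plan is to exhibit an explicit $\mathbf{\Pi}^0_{2+n}$ formula for $F_n$ by induction on $n$. The naive unfolding of $F_n$ into iterated $\forall\exists$ alternations of set equality costs $+2$ in Borel rank per level and lands in $\mathbf{\Pi}^0_{2n+1}$; to recover the optimal bound $\mathbf{\Pi}^0_{2+n}$ I would exploit the binary coding of subsets built into the definition of $X_n$, trading one $\forall\exists$ alternation per level for a single universal quantifier.

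Concretely, I would introduce auxiliary predicates $T_k(x,y,l,m)$ for $0 \leq k < n$ by
\[
T_0(x,y,l,m) \equiv [x(0)(l) = y(0)(m)], \qquad T_k(x,y,l,m) \equiv \forall j,\, j'\ \bigl(T_{k-1}(x,y,j,j') \to x(k)(l)(j) = y(k)(m)(j')\bigr),
\]
whose intended meaning is that the binary codings $x(k)(l)$ and $y(k)(m)$ determine the same subset of $A^x_{k-1}=A^y_{k-1}$. A routine induction shows $T_k \in \mathbf{\Pi}^0_{k+1}$: the base case is closedness of equality on $2^\N$, and in the inductive step the implication has $\mathbf{\Pi}^0_k$ antecedent and clopen consequent (so $\mathbf{\Sigma}^0_k$), and the universal quantification lifts this to $\mathbf{\Pi}^0_{k+1}$.

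The main lemma, proved by induction on $k$, is that for $x,y\in X_n$ with $A^x_{k-1}=A^y_{k-1}$, one has $T_k(x,y,l,m) \iff a^{x,l}_k = a^{y,m}_k$. Its proof relies on condition (3) from the definition of $X_n$: by an auxiliary induction one shows that on $X_n$, set-equality of coded subsets $a^{x,j}_k = a^{x,p}_k$ already forces bitwise equality of the binary codings $x(k)(j) = x(k)(p)$, which is exactly what is needed to pass from ``indices match set-theoretically'' to ``codings match bitwise''. With the lemma in hand the inductive decomposition
\[
F_n(x,y) \iff F_{n-1}(x,y) \wedge \forall l \exists m\ T_{n-1}(x,y,l,m) \wedge \forall m \exists l\ T_{n-1}(x,y,l,m)
\]
holds on $X_n$: the first conjunct supplies $A^x_{n-1}=A^y_{n-1}$, under which the other two conjuncts translate via the lemma into $\{a^{x,l}_{n-1}\}_l = \{a^{y,m}_{n-1}\}_m$, i.e., $A^x_n = A^y_n$. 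Starting from $F_1\in\mathbf{\Pi}^0_3$, induction yields $F_n \in \mathbf{\Pi}^0_{n+2}$: if $F_{n-1}\in\mathbf{\Pi}^0_{n+1}$, then $\exists m\ T_{n-1}\in\mathbf{\Sigma}^0_{n+1}$, the outer $\forall l$ raises this to $\mathbf{\Pi}^0_{n+2}$, and the conjunction with $F_{n-1}$ remains in $\mathbf{\Pi}^0_{n+2}$. For $n=\omega$, $F_\omega = \bigcap_{k<\omega}\{(x,y) : A^x_k = A^y_k\}$ is a countable intersection of sets of unbounded finite Borel complexity, hence lies in $\mathbf{\Pi}^0_\omega = \mathbf{\Pi}^0_{2+\omega}$. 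The main obstacle is the key lemma: the coding trick must gain exactly $+1$ per level rather than $+2$, which is precisely what is required to match the optimal potential complexity, and verifying that the formal condition (3) of $X_n$ suffices for this identification requires the additional recursive argument noted above.
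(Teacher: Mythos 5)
Your approach is essentially the same as the paper's. The paper introduces a hierarchy of auxiliary relations $Q_k$ on $X_k\times 2^{\mathbb{N}}$ with intended meaning $A^x_k=A^y_k \wedge a^{x,v}_k=a^{y,w}_k$, shows $Q_k\in\mathbf{\Pi}^0_{k+1}$ by a recursion that at each step adds a single universal quantifier over a $\mathbf{\Sigma}^0_k$ implication (with no paired existential), and then bootstraps $F_{k+1}$ from $F_k$ and the $Q$'s; your $T_k(x,y,l,m)$ is in effect $Q_k$ evaluated at $(u^n_k(x), x(k)(l))$ and $(u^n_k(y), y(k)(m))$ with the conjunct $A^x_k=A^y_k$ stripped out and supplied externally. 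The base case $F_1\in\mathbf{\Pi}^0_3$, the complexity bookkeeping, and the $n=\omega$ step all match the paper's.

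One correction is needed in the statement of your key lemma. Under merely $A^x_{k-1}=A^y_{k-1}$, the forward implication $T_k(x,y,l,m)\Rightarrow a^{x,l}_k=a^{y,m}_k$ can fail: if $A^x_k\neq A^y_k$ there may be $j$ with $x(k)(l)(j)=1$ but $a^{x,j}_{k-1}$ not equal to any $a^{y,j'}_{k-1}$, and then the inner implication defining $T_k$ is vacuously true at that $j$ while $a^{x,j}_{k-1}\in a^{x,l}_k\setminus a^{y,m}_k$, so $T_k(x,y,l,m)$ can hold with $a^{x,l}_k\neq a^{y,m}_k$. The correct hypothesis is $A^x_k=A^y_k$; this still implies $A^x_{k-1}=A^y_{k-1}$, so your induction on $k$ is unaffected, and it is exactly what the conjunct $F_{n-1}(x,y)$ supplies at the point where you apply the lemma with $k=n-1$, so the final decomposition of $F_n$ remains valid once the lemma is restated. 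The auxiliary claim you isolate --- that on $X_n$, $a^{x,j}_k=a^{x,p}_k$ forces bitwise equality $x(k)(j)=x(k)(p)$, via condition (3) in the definition of $X_n$ --- is correct, is precisely what allows the would-be $\forall\exists$ alternation to be collapsed to a single $\forall$, and is left implicit in the paper's proof; making it explicit is a genuine improvement in exposition.
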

\begin{proof}
For $F_1$, which is $=^+$, a direct computation shows that it is $\mathbf{\Pi}^0_3$.

We define relations $Q_n$ on $X_{n}\times2^\N$, for $0\leq n<\omega$, so that
\begin{itemize}
    \item $Q_n$ is $\mathbf{\Pi}_{n+1}$, and
    \item for $1\leq n$, for $x,y\in X_{n+1}$, $x\mathrel{F_{n+1}}{y}$ if and only if $u^{n+1}_n(x)\mathrel{F_n}u^{n+1}_n(y)$ and
    \begin{equation*}
        \forall n_1\exists n_2\forall l_1,l_2[(u^{n+1}_{n-1}(x),x(n-1)(l_1))\mathrel{Q_{n-1}}(u^{n+1}_{n-1}(y),y(n-1)(l_2))\rightarrow x(n)(n_1)(l_1)=y(n)(n_2)(l_2)].
    \end{equation*}
\end{itemize}
Assuming this, for $1\leq n$, as $Q_{n-1}$ is $\mathbf{\Pi}_{n}$, the expression in the square brackets is $\mathbf{\Sigma}_{n}$, which shows that $F_{n+1}$ is  $\mathbf{\Pi}^0_{n+3}$, as required.

For $n=0$, define $Q_0$ as equality on $2^\N$. Note that we identify $X_0$ as a space with 1 member, and so we identify $X_0 \times 2^\N$ with $2^\N$. For $n\geq 1$, given $(x,v)\in X_{n}\times 2^\N$, recall the definition of $A^x_{n}=\set{a^{x,l}_{n-1}}{l\in\N}$. Define $a^{x,v}_{n}=\set{a^{x,t}_{n-1}}{v(t)=1}\subset A^x_{n}$.
Given $(x,v),(y,w)\in X_{n}\times 2^\N$, define
\begin{equation*}
    (x,v)\mathrel{Q_n}(y,w)\iff A^x_{n}=A^y_{n}\wedge a^{x,v}_{n}\mathrel{=} a^{y,w}_{n}.
\end{equation*}
The relation $(x,v)\mathrel{Q_{n+1}}(y,w)$ is true if and only if
\begin{equation*}
    (x\mathrel{F_{n}}{y})\wedge \forall l_1,l_2[(u^n_{n-1}(x),x(n-1)(l_1))\mathrel{Q_{n}}(u^n_{n-1}(y),y(n-1)(l_2))\rightarrow v(l_1)=w(l_2)].
\end{equation*}
Inductively, $F_{n}$ is $\mathbf{\Pi}^0_{n+2}$ and $Q_{n}$ is $\mathbf{\Pi}^0_{n+1}$. We conclude that $Q_{n+1}$ is $\mathbf{\Pi}^0_{n+2}$.

\end{proof}

\section{Complexity on comeager sets: some ideas and some obstacles}\label{Section: obstacles first round}
One obstacle towards the $n>1$ case was already encountered. The natural topology coming from the Friedman-Stanley jump operation does not work (see ~\ref{claim: =++ reduction on comeager}), and we therefore had to find the ``correct'' presentation of these equivalence relations, as in Section~\ref{Section: def of Fn into}. 

Let us focus on a corollary of the main theorem, that the equivalence relations $F_n$ retain their complexity on comeager sets (see Section~\ref{subsection : spectrum of the meager ideal}). In this section we sketch some ideas behind the proof for $F_1$, and explain why a different type of construction is necessary to deal with $F_n$ for $n>1$.

\subsection{The case $n=1$}\label{subsection : warmup n=1 case}
The fact that $F_1$ (which is $=^+$) retains its complexity on comeager sets was proven in \cite{ksz}. Given a comeager set $C\subset (2^\N)^\N$, let $C^\ast$ be its Vaught transform (see~\cite[3.2.2]{Gao09}), $C^\ast=\set{a\in (2^\N)^\N}{(\forall^\ast g\in S_\infty)\, g\cdot a \in C}$.

Fix a map $g\colon (2^\N)\to (2^\N)^\N$. Define $f_0\colon (2^\N)^\N\to (2^\N)^{\N\times \N}$ by \[f_0(x)(k,l)=g(x(k))(l).\] 
Fix a bijection $e\colon \N\to \N\times \N$. This extends naturally to a homeomorphism $\hat{e}\colon (2^\N)^{\color{blue}\N\times\N}\to (2^\N)^{\color{blue}\N}$. Let $C\colon (2^\N)^\N\to (2^\N)^\N$ be a Borel map so that for $x\in (2^\N)^\N$, if the set enumerated by $x$ is finite, then $C(x)=x$, and if the set enumerated by $x$ is infinite, then $C(x)$ is an injective enumeration of the same set. Define \[f=C\circ \hat{e}\circ f_0.\] For a ``sufficiently generic'' choice of map $g$, it can be verified that $f(x)\in C^\ast$ for all $x\in (2^\N)^\N$. It follows that there is a Borel map $\rho\colon (2^\N)^\N \to S_\infty$ so that $\rho(x)\cdot f(x) \in C$. Finally, the map $x\mapsto \rho(x)\cdot f(x)$ is a reduction of $F_1$ to $F_1\restriction C$.


\subsection{The case $n > 1$}
Below we explain why a direct generalization of the construction in Section~\ref{subsection : warmup n=1 case}, to construct a map reducing $F_2$ to some comeager subset of $F_2$, does not work. Fix $(x,y)\in (2^\N)^\N \times (2^\N)^\N$ in the domain of $F_2$. We would want to define $f(x,y)$ to be of the form $(u,v)$ so that $(u,v)$ is ``sufficiently generic'', in the sense that it lands in the Vaught transform of some comeager set. 

We can start by defining $u$ from $x$ as before, so that $u$ is ``sufficiently generic''. We may hope to define $v$ from $y$ in the same way, so that $v$ is also ``sufficiently generic''. 
The problem can be seen from the group action presentation in Section~\ref{subsection : orbit ER presentation}. At the second level, we have the usual permutation action of $S_\infty$ (the action $a$), but also another copy of $S_\infty$ acting ``from behind'' via the action $b$. The construction in Section~\ref{subsection : warmup n=1 case}, which is invariant under the action $a$, is \emph{not} invariant under the action $b$, and therefore the resulting map will not respect $F_2$.

This difficulty can also be seen from a set theoretic perspective. Let $A = A^x$ and $B = A^u$, their corresponding classifying $F_1$-invariants. We may want to replace the space $\mathcal{P}_{\aleph_0}(\N)$ (which is identified with $2^\N$) with the space $\mathcal{P}_{\aleph_0}(A)$. Now we may hope to follow the construction of Section~\ref{subsection : warmup n=1 case} to find a ``definable'' map taking some $Y\in \mathcal{P}_{\aleph_0}(A)$ to a ``sufficiently generic'' member of $\mathcal{P}_{\aleph_0}(B)$. (The quotation marks are intended to mean that once translated in a reasonable way to a map defined on our Polish space $X_2$, it will be Borel definable, and land in some comeager set.) This construction should be done independently of the enumerations of $A$ and $Y$, for the resulting map to be a homomorphism $F_2\to_B F_2$.
This hope is immediately crushed. Such constructions are common with $A=\omega$, or more generally an ordinal, but impossible for higher rank sets.

The point of this discussion is to mention that our construction of $v$, towards $f(x,y)=(u,v)$, has to rely on the enumerations coming from $x$, while ultimately being independent of those, up to $F_2$-equivalence. It cannot be done by a direct iteration of the previous construction. The main new construction, which deals with the $n>1$ case, is presented in Section~\ref{section: the main construction}, Definition~\ref{defn: main construction}.

We also present in Section~\ref{section: variation of n=1 case construction} a variation of the above sketched construction for the $n=1$ case. This variation is needed simply to ``align'' the two constructions, as in Section~\ref{subsec: defn of homomorphism f}.

\section{Permutations}\label{Section: permutations}

In various points below we will want a member of some product space, constructed in a specific way, to land in some comeager set. As in Section~\ref{subsection : warmup n=1 case} we will be able to guarantee this only after applying a group action. The following lemma will be used to deal with the construction for the $n>1$ case.

Let $S, X, Y_1, \dots , Y_k$ be infinite sets, considered as discrete metric spaces. Consider the space $(2^S)^X\times(2^X)^{Y_1}\times\dots\times (2^X)^{Y_k}$ with the product topology. Consider the natural diagonal action of $\mathrm{Sym}(X)$ on $(2^S)^X\times(2^X)^{Y_1}\times\dots\times (2^X)^{Y_k}$, acting on all copies of $X$ simultaneously.
For $i=1,\dots,k$, consider the natural action of and $\mathrm{Sym}(Y_i)$ on $(2^S)^X\times(2^X)^{Y_1}\times\dots\times (2^X)^{Y_k}$. These actions commute, leading to an action 
\begin{equation*}
    \mathrm{Sym}(X)\times\mathrm{Sym}(Y_1)\times\dots\times\mathrm{Sym}(Y_k)\curvearrowright (2^S)^X\times(2^X)^{Y_1}\times\dots\times (2^X)^{Y_k}.
\end{equation*}
We consider each $\mathrm{Sym}(Y_i)$, and $\mathrm{Sym}(X)$, as a topological group with the point-wise convergence topology, and $\mathrm{Sym}(X)\times\mathrm{Sym}(Y_1)\times\dots\times\mathrm{Sym}(Y_k)$ with the product topology.

\begin{lemma}\label{lemma: main technical sufficient condition for genericity}
Let $D\subset (2^S)^X\times(2^X)^{Y_1}\times\dots\times (2^X)^{Y_k}$ be dense open.
Fix $(\zeta,\xi_1,\dots,\xi_k)\in (2^S)^X\times(2^X)^{Y_1}\times\dots\times (2^X)^{Y_k}$ satisfying the following assumptions:
\begin{enumerate}
    \item For any finite permutation $\pi$ of $X$, the set 
    \begin{equation*}
        D_{\pi\cdot\zeta}=\set{(\delta_1,\dots\delta_k)}{(\pi\cdot\zeta,\delta_1,\dots,\delta_k)\in D}
    \end{equation*} 
    is dense in $(2^X)^{Y_1}\times\dots\times (2^X)^{Y_k}$.
    \item 
\begin{enumerate}[(a)]
    \item For any finite partial function $\tau\colon X\to\{0,1\}$ and any $i\in\{1,\dots,k\}$, there are infinitely many $y\in Y_i$ so that $\xi_i(y)(\ndash)$, considered as a function $X\to \{0,1\}$, extends $\tau$. 
    \item Given finite partial functions $\tau\colon S\to\{0,1\}$, $\tau_i\colon Y_i\to\{0,1\}$, $i=1,\dots,k$, there are infinitely many $x\in X$ so that $\xi_i(\ndash)(x)$, considered as a function $Y_i\to\{0,1\}$, extends $\tau_i$, for every $i=1,\dots,k$, and $\zeta(x)(\ndash)$, considered as a function $S\to\{0,1\}$, extends $\tau$.
\end{enumerate}
\end{enumerate}
 Then the set
\begin{equation*}
    G=\set{(g,g_1,\dots,g_k)\in \mathrm{Sym}(X)\times\mathrm{Sym}(Y_1)\times\dots\times\mathrm{Sym}(Y_k)}{(g,g_1,\dots,g_k)\cdot (\zeta,\xi_1,\dots,\xi_k))\in D}
\end{equation*}
is dense open in $\mathrm{Sym}(X)\times\mathrm{Sym}(Y_1)\times\dots\times\mathrm{Sym}(Y_k)$.
In particular, if $D$ is assumed to be comeager, then $G$ is concluded to be comeager.
\end{lemma}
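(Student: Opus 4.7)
My plan is a back-and-forth / finite-extension argument, reducing membership in $G$ to a finite list of coordinate constraints on the translate and then meeting them via assumptions (2a)--(2b). Openness of $G$ is immediate from continuity of the action and openness of $D$, so the real content is density. Fix any basic open $U$ in the product group, specified by finite bijections $\sigma\colon F\to F'$ inside $X$ and $\sigma_i\colon F_i\to F'_i$ inside $Y_i$. Extend each to a finite permutation $\pi$ of $X$ (supported on some finite $H\supseteq F\cup F'$) and $\pi_i$ of $Y_i$ (supported on $H_i\supseteq F_i\cup F'_i$). I will build the desired $(g,g_1,\dots,g_k)\in U$ as finite extensions of $\pi,\pi_1,\dots,\pi_k$.

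Next I apply assumption (1) to $\pi$: the slice $D_{\pi\cdot\zeta}$ is dense in $(2^X)^{Y_1}\times\cdots\times(2^X)^{Y_k}$. I use this density to pick $(\delta_1,\dots,\delta_k)$ inside $D_{\pi\cdot\zeta}$ satisfying the finite prescription $\delta_i(y)(x)=\xi_i(\pi_i^{-1}(y))(\pi^{-1}(x))$ for $(y,x)\in H_i\times H$ and each $i$. Then by openness of $D$, there is a basic open neighborhood of $(\pi\cdot\zeta,\delta_1,\dots,\delta_k)$ contained in $D$, determined by finite sets $E_0\subseteq X\times S$ of $\zeta$-queries and $E_i\subseteq Y_i\times X$ of $\xi_i$-queries, each with its prescribed value. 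It suffices to arrange that the translate $(g\cdot\zeta,(g,g_1)\cdot\xi_1,\dots,(g,g_k)\cdot\xi_k)$ matches these prescribed values.

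Translating through the action, I need $\zeta(g^{-1}(x))(s)=(\pi\cdot\zeta)(x)(s)$ for $(x,s)\in E_0$ and $\xi_i(g_i^{-1}(y))(g^{-1}(x))=\delta_i(y)(x)$ for $(y,x)\in E_i$. For $x\in H$ and $y\in H_i$ these equations are automatic once I take $g\supseteq\pi$ and $g_i\supseteq\pi_i$, using the prescription above to handle the ``old $\times$ old'' case. For each $x\notin H$ appearing in the relevant projections, I invoke (2b) to choose a fresh $x'\in X\setminus H$ (distinct from all other chosen $x'$'s) so that $\zeta(x')(s)=\zeta(x)(s)$ on the appropriate finite set of $s$'s and $\xi_i(\pi_i^{-1}(y))(x')=\delta_i(y)(x)$ for each relevant $(y,i)$ with $y\in H_i$; I then set $g^{-1}(x):=x'$. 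For each $y\notin H_i$ appearing in $E_i$, I invoke (2a) to choose $y'\in Y_i\setminus H_i$ (distinct from all other chosen $y'$'s) so that $\xi_i(y')(\cdot)$ takes the required finitely many values at the points $\pi^{-1}(x)$ for $x\in H$ and at the $x'$ chosen just above for $x\notin H$; I then set $g_i^{-1}(y):=y'$. Completing $g$ and each $g_i$ to finite permutations of $X,Y_i$ (identity off the finite support we have touched) yields an element of $U$ whose translate lies in the chosen basic neighborhood, hence in $D$.

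For the \emph{in particular} clause, write a comeager $D$ as $\bigcap_n D_n$ with $D_n$ dense open. Assumption (1) for $D$ upgrades to (1) for each $D_n$, since $D_{\pi\cdot\zeta}\subseteq D_{n,\pi\cdot\zeta}$, while (2a)--(2b) depend only on $(\zeta,\xi_1,\dots,\xi_k)$; so each $G_n$ is dense open and $G=\bigcap_n G_n$ is comeager. The main obstacle is the third step: orchestrating the simultaneous finite constraints on the fresh $x'$'s and $y'$'s so that they match exactly the hypotheses of (2b) and (2a). The point is that by prescribing $\delta_i$ on $H_i\times H$ via assumption (1) before invoking openness of $D$, the residual constraints on each new $x'$ are of the form ``$\zeta(x')$ prescribed on finitely many $s\in S$, and $\xi_i(y')(x')$ prescribed for finitely many \emph{fixed} $y'=\pi_i^{-1}(y)$,'' which is precisely the format of (2b); likewise the residual constraints on each new $y'$ prescribe $\xi_i(y')$ on a finite subset of $X$, which is the format of (2a).
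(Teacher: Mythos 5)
Your proof is correct and follows essentially the same strategy as the paper's: openness from continuity, then density via choosing $\delta_i\in D_{\pi\cdot\zeta}$ with the finite ``old $\times$ old'' prescription, invoking openness of $D$ to reduce to finitely many coordinate constraints, and meeting them by finite extensions built using (2a) and (2b). The only cosmetic difference is the order: you extend in $X$ first (via (2b)) and then in $Y_i$ (via (2a)), while the paper does $Y_i$ first and then $X$, but both orders are valid since each assumption allows arbitrary finite constraints on the ``opposite'' index set.
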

\begin{proof}
First, since the map $\mathrm{Sym}(X)\times\mathrm{Sym}(Y_1)\times\dots\times\mathrm{Sym}(Y_k) \to (2^S)^X\times(2^X)^{Y_1}\times\dots\times (2^X)^{Y_k}$, $(g,g_1,\dots,g_k)\mapsto (g,g_1,\dots,g_k)\cdot(\zeta,\xi_1,\dots,\xi_k)$, is continuous, then $G$ is open as the pre-image of $D$. 

Next we prove that $G$ is dense. Fix finite partial permutations $\pi,\pi_1,\dots,\pi_k$ of $X,Y_1,\dots,Y_k$ respectively. We need to find an extension of these in $G$. Let $\Bar{X},\Bar{Y_1},\dots,\Bar{Y_k}$ be the finite supports of $\pi,\pi_1,\dots,\pi_k$, respectively.

By assumption (1), $D_{\pi\cdot\zeta}$ is dense. Fix $(\delta_1,\dots,\delta_k)\in D_{\pi\cdot\zeta}$ which agree with $(\pi_1\cdot\xi_1,\dots,\pi_k\cdot\xi_k)$ on $(2^{\bar{X}})^{\bar{Y}_1}\times\dots\times (2^{\bar{X}})^{\bar{Y}_k}$. Since $D$ is open, we may find finite $\hat{X},\hat{Y_1},\dots,\hat{Y_k}$, extending $\Bar{X},\Bar{Y_1},\dots,\Bar{Y_k}$, and a finite set $\hat{S}$, so that if $(\zeta',\xi'_1,\dots,\xi'_k)$ agree with $(\pi\cdot\zeta,\delta_1,\dots,\delta_k)$ on $(2^{\hat{S}})^{\hat{X}}\times(2^{\hat{X}})^{\hat{Y}_1}\times\dots\times (2^{\hat{X}})^{\hat{Y}_k}$, then $(\zeta',\xi'_1,\dots,\xi'_k)\in D$. It remains to find $(g,g_1,\dots,g_k)$, extending $\pi,\pi_1,\dots,\pi_k$, so that $(g,g_1,\dots,g_k)\cdot (\zeta,\xi_1,\dots,\xi_k)$ and $(\pi\cdot\zeta,\delta_1,\dots,\delta_k)$ agree on $(2^{\hat{S}})^{\hat{X}}\times(2^{\hat{X}})^{\hat{Y}_1}\times\dots\times (2^{\hat{X}})^{\hat{Y}_k}$.

For each $i=1,\dots,k$, for each $y\in \hat{Y}_i\setminus \bar{Y}_i$, consider the function $\tau_y\colon \bar{X}\to\{0,1\}$, $\tau_y(x)=\delta_i(y)(x)$. By assumption (2)(a), there are infinitely many $y'\in Y_i$ so that $\xi_i(y')(\ndash)$ and $\tau_y(\ndash)$ agree on $\bar{X}$. It follows that there is a finite permutation $g_i$ extending $\pi_i$ so that $(g_i\cdot \xi_i)(y)(\ndash)$ and $\delta_i(y)(\ndash)$ agree on $\bar{X}$, for all $y\in \hat{Y}_i\setminus\bar{Y_i}$. It follows that $(\pi,g_1,\dots,g_k)\cdot (\zeta,\xi_1,\dots,\xi_k)$ and $(\pi\cdot\zeta,\delta_1,\dots,\delta_k)$ agree on $(2^{\hat{S}})^{\hat{X}}\times(2^{\bar{X}})^{\hat{Y}_1}\times\dots\times (2^{\bar{X}})^{\hat{Y}_k}$.

Note that conditions (2)(a) and (2)(b) of the lemma are invariant under the group action. We apply condition (2)(b) to $(\pi,g_1,\dots,g_k)\cdot (\zeta,\xi_1,\dots,\xi_k)$. We may write $(\pi,g_1,\dots,g_k)\cdot (\zeta,\xi_1,\dots,\xi_k)$ as $(\pi\cdot\zeta,(\pi,g_1)\cdot\xi_1,\dots,(\pi,g_k)\cdot\xi_k)$, where $(\pi,g_i)\cdot\xi_i$ refers to the action $\mathrm{Sym}(X)\times\mathrm{Sym(Y_i)}\curvearrowright (2^S)^X\times (2^X)^{Y_i}$.

For each $i=1,\dots,k$, for each $x\in \hat{X}\setminus \bar{X}$, consider the function $\tau^x_i\colon \hat{Y}_i\to\{0,1\}$, $\tau^x_i(y)=\delta_i(y)(x)$. Define also $\tau^x\colon \hat{S}\to\{0,1\}$ by $\tau^x(s)=(\pi\cdot\zeta)(x)(s)$. For each $x\in \hat{X}\setminus\bar{X}$ there are infinitely many $x'\in X$ so that $((\pi,g_i)\cdot\xi_i)(\ndash)(x')$ extends $\tau^{x}_i$ and $(\pi\cdot\zeta)(x')(\ndash)$ extends $\tau^{x}$.
It follows that there is a finite permutation $g$ extending $\pi$ so that $((g,g_i)\cdot\xi_i)(\ndash)(x)$ and $\delta_i(\ndash)(x)$ agree on $\hat{Y}_i$, for all $i$ and any $x\in\hat{X}\setminus \bar{X}$, and $(g\cdot\zeta)(\ndash)(x)$ agrees with $\pi\cdot\zeta(\ndash)(x)$ on $\hat{S}$ for any $x\in\hat{X}\setminus \bar{X}$.
We conclude that $(g,g_1,\dots,g_k)\cdot(\zeta,\xi_1,\dots,\xi_k)$ and $(\pi\cdot\zeta,\delta_1,\dots,\delta_k)$ agree on $(2^{\hat{S}})^{\hat{X}}\times(2^{\hat{X}})^{\hat{Y}_1}\times\dots\times (2^{\hat{X}})^{\hat{Y}_k}$, as required.
\end{proof}

\subsection{The $n=1$ case}
When dealing with the first coordinate of $F_n$ we only have the action $a\colon S_\infty{\color{blue}\curvearrowright}(2^\N)^{\color{blue}\N}$. More generally, we deal with product actions of the form
\begin{equation*}
    \mathrm{Sym}(Y_1)\times\dots\times\mathrm{Sym}(Y_k)\curvearrowright (2^X)^{Y_1}\times\dots\times (2^X)^{Y_k}.
\end{equation*}
\begin{lemma}\label{lemma: main technical sufficient condition for genericity n=1 case}
Fix a dense open $D\subset (2^X)^{Y_1}\times\dots\times (2^X)^{Y_k}$ and countable infinite sets $M_1,\dots,M_k$ . Then there is a dense open set $D'\subset (2^X)^{M_1}\times\dots\times(2^X)^{M_k}$ so that for any $\zeta\in (2^X)^{Y_1}\times\dots\times (2^X)^{Y_k}$, if $\zeta$ satisfies the following property: given any finite partial injective functions $\tau_i\colon M_i\to Y_i$, $i=1,\dots,k$, there are extensions $\alpha_i\colon M_i\to Y_i$ so that $\left( \zeta(i)\circ \alpha_i : i<k \right)$, a member of the space $(2^X)^{M_1}\times\dots\times (2^X)^{M_k}$, is in $D'$, then the set
\begin{equation*}    G=\set{(g_1,\dots,g_k)\in \mathrm{Sym}(Y_1)\times\dots\times\mathrm{Sym}(Y_k)}{(g_1,\dots,g_k)\cdot \zeta\in D}
\end{equation*}
is dense open in $\mathrm{Sym}(Y_1)\times\dots\times\mathrm{Sym}(Y_k)$.
In particular, if $D$ is assumed to be comeager, then there is a comeager $D'$ so that $G$ is concluded to be comeager.
\end{lemma}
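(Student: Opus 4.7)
The plan is to define $D'$ as a literal homeomorphic copy of $D$, after fixing arbitrary bijections $\mu_i\colon M_i\to Y_i$ (available since $M_i$ and $Y_i$ are both countably infinite). Let $\Phi\colon (2^X)^{M_1}\times\dots\times (2^X)^{M_k}\to (2^X)^{Y_1}\times\dots\times (2^X)^{Y_k}$ be the homeomorphism $\Phi((\xi_i)_i)_i(y)=\xi_i(\mu_i^{-1}(y))$, and set $D'=\Phi^{-1}(D)$. Then $D'$ is dense open as the preimage of a dense open set under a homeomorphism. The goal is to show that the extendability hypothesis for this simple $D'$ is already strong enough to force density of $G$.

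Suppose then that $\zeta$ satisfies the extendability property; I interpret the extensions $\alpha_i$ as total injections of $M_i$ into $Y_i$, the natural reading when extending a partial injection. Openness of $G$ follows from continuity of the group action, so I focus on density. Fix finite partial permutations $\pi_i$ of $Y_i$ with ranges $R_i=\mathrm{ran}(\pi_i)$. The key idea is to encode the constraint ``$g_i$ extends $\pi_i$'' into $\tau_i$: define $\tau_i\colon \mu_i^{-1}(R_i)\to Y_i$ by $\tau_i(m)=\pi_i^{-1}(\mu_i(m))$, a finite partial injection mapping $\mu_i^{-1}(R_i)$ bijectively onto $\mathrm{dom}(\pi_i)$. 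The hypothesis yields total injective extensions $\alpha_i\supseteq\tau_i$ with $(\zeta_i\circ\alpha_i)_i\in D'$, equivalently $(\zeta_i\circ h_i)_i\in D$ where $h_i:=\alpha_i\circ\mu_i^{-1}\colon Y_i\to Y_i$. By the choice of $\tau_i$, $h_i$ agrees with $\pi_i^{-1}$ on $R_i$; and injectivity of $\alpha_i$ together with $\alpha_i(\mu_i^{-1}(R_i))=\mathrm{dom}(\pi_i)$ forces $h_i(y)\notin\mathrm{dom}(\pi_i)$ for every $y\notin R_i$. Pick a basic open $V\subseteq D$ containing $(\zeta_i\circ h_i)_i$ whose support one may enlarge to contain $R_i$, and define a permutation $g_i$ of $Y_i$ whose inverse equals $h_i$ on the support of $V_i$ and is an arbitrary bijection elsewhere, which is possible since the complement of a finite set in $Y_i$ is still countably infinite and $h_i|_{\mathrm{supp}(V_i)}$ is injective. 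The compatibility constraints above guarantee $g_i\supseteq\pi_i$, and $(\zeta_i\circ g_i^{-1})_i$ agrees with $(\zeta_i\circ h_i)_i$ on $\mathrm{supp}(V_i)$, placing it inside $V\subseteq D$. Hence $(g_1,\dots,g_k)\in G$ extends the prescribed partial permutations.

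The ``in particular'' clause follows routinely: write a comeager $D=\bigcap_n D_n$ with each $D_n$ dense open, apply the construction to each $D_n$ to obtain dense opens $D'_n$, and set $D'=\bigcap_n D'_n$, which is comeager. Since $D'\subseteq D'_n$ for every $n$, the hypothesis for $D'$ immediately implies the analogous hypothesis for each $D'_n$, whence each $G_n$ is dense open and $G=\bigcap_n G_n$ is comeager. The only delicate step, rather than a genuine obstacle, is the choice of $\tau_i=\pi_i^{-1}\circ\mu_i$: it is precisely what forces any injective extension $\alpha_i$ to be compatible with $\pi_i$ after transport along $\mu_i$, so that the witness produced by the hypothesis can be converted into a permutation extending $\pi_i$ without losing membership in $D$.
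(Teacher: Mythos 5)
Your argument is correct and essentially identical to the paper's proof: take $D'$ to be the homeomorphic copy of $D$ under chosen bijections $\mu_i\colon M_i\to Y_i$, encode the target partial permutations $\pi_i$ via $\tau_i=\pi_i^{-1}\circ\mu_i$, and use openness of $D$ to round the resulting total injections $h_i=\alpha_i\circ\mu_i^{-1}$ to genuine bijections extending $\pi_i$. The only small slip is in the ``in particular'' clause, where a comeager $D$ need only \emph{contain} $\bigcap_n D_n$ for dense open $D_n$, so one gets $G\supseteq\bigcap_n G_n$ rather than equality, which is still enough.
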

\begin{proof}
First, since the map $\mathrm{Sym}(Y_1)\times\dots\times\mathrm{Sym}(Y_k) \to (2^X)^{Y_1}\times\dots\times (2^X)^{Y_k}$, $(g_1,\dots,g_k)\mapsto (g_1,\dots,g_k)\cdot \zeta$, is continuous, then $G$ is open as the pre-image of $D$. 

Next, we describe the set $D'$ so that if $\zeta$ satisfies the assumption in the lemma, then the set $G$ is dense.  First assume that $M_i = Y_i$ and take $D'$ to be $D$. The assumption tells us that for any finite partial permutations $\tau_i\colon M_i \to Y_i$ there are extensions to total injective maps $\alpha_i \colon Y_i \to Y_i$ so that $\seqq{\zeta(i)\circ\alpha_i}{i<k}\in D$. If $\alpha_i$ were all bijections, so in $\mathrm{Sym}(Y_i)$, we would be done. Nevertheless, since $D$ is open, we may find bijections $\sigma_i$ of $Y_i$ which extend $\tau_i$ and are sufficiently close to $\alpha_i$ so that $\seqq{\zeta(i)\circ\sigma_i}{i<k}\in D$ as well. Finally, for any infinite countable $M_1,\dots,M_k$, we may fix bijections $M_i \to Y_i$, resulting in a homeomorphism between $(2^X)^{Y_1}\times\dots\times (2^X)^{Y_k}$ and $(2^X)^{M_1}\times\dots\times (2^X)^{M_k}$. We let $D'$ be the image of $D$. 
\end{proof}

\section{The main construction}\label{section: the main construction}
\begin{defn}\label{defn: main construction}
Fix a function 
\begin{equation*}
    \alpha\colon (2^{<\N})^{<\N}\times\N\times\N\to 2
\end{equation*} (which will be chosen to be ``sufficiently generic''). Define
\begin{equation*}
\beta\colon (2^\N)^{<\N}\to (2^{\N^{<\N}\times\N})^\N\textrm{ by}    
\end{equation*}
    \begin{equation*}
        \beta(x_1,\dots,x_l)(m)(t,k)=\alpha(x_1\circ t,\dots, x_l\circ t, k, m)\textrm{, for }t\in\N^{<\N}; k,m\in\N.
    \end{equation*}
 Each $x_i$ is considered a function $\N\to 2$, and so $x_i\circ t$ is a member of $2^{<\N}$. Define
 \begin{equation*}
 \gamma\colon (2^\N)^\N\to (2^{\N^{<\N}\times\N})^{\N^{<\N}\times\N}\textrm{ by}    
 \end{equation*}
 \begin{equation*}
     \gamma(x)(t,k)=\beta(x\circ t)(k).
 \end{equation*}
\end{defn}

\begin{remark}\label{remark: gamma}
    \begin{itemize}
        \item The function $\gamma$ is continuous.
        \item  The function $\gamma$ is a homomorphism from the orbit equivalence relations
        \begin{equation*}
            \mathrm{Sym}(\N){\color{red}\curvearrowright}(2^{\color{red}\N})^\N\textrm{ to } \mathrm{Sym}(\N^{<\N}\times \N){\color{red}\curvearrowright}(2^{\color{red}\N^{<\N}\times\N})^{\N^{<\N}\times\N}\textrm{, and }
        \end{equation*}
        \begin{equation*}
            \mathrm{Sym}(\N){\color{blue}\curvearrowright}(2^\N)^{\color{blue}\N}\textrm{ to } \mathrm{Sym}(\N^{<\N}\times\N){\color{blue}\curvearrowright}(2^{\N^{<\N}\times\N})^{\color{blue}\N^{<\N}\times\N}
        \end{equation*}
        \item The definition of $\gamma$ relies on a choice of $\alpha$. We will show that there is some $\alpha$ for which $\gamma$ satisfies the properties which we need. This will happen for $\alpha$ chosen generically, with respect to the product topology $2^{(2^{<\N})^{<\N}\times\N\times\N}$.        
    \end{itemize}
\end{remark}

\begin{Notation}\label{notation: N}
Let $N=\N^{<\N}\times \N$. 
\end{Notation}

It will be convenient, to utilize the construction above, to work with the space $(2^N)^N$ instead of $(2^\N)^\N$.
To illustrate the construction, consider the following lemma. 

Say that $a\in(2^\N)^\N$ is \textbf{injective} if it is a sequence of distinct reals: $a(i)\neq a(j)$ for $i\neq j$. Say that $a\in (2^\N)^\N$ is \textbf{separated} if any distinct $n,k\in\N$ are separated by one of the members of $a$: there is some $i\in\N$ so that $a(i)(n) \neq a(i)(k)$.

\begin{lemma}\label{lemma: examples of main construction}
 For a generic $\alpha$ the following holds. Suppose $x\in (2^\N)^\N$ is separated and $y\in (2^\N)^\N$ is injective and separated. Then the pair $(\zeta,\xi_1) = (\gamma(x),\gamma(y))\in (2^S)^X\times (2^X)^{Y_1}$, where $S=N$, $X = N$, and $Y_1=N$, satisfies the assumptions (2) in Lemma~\ref{lemma: main technical sufficient condition for genericity}. That is:
        \begin{itemize}
            \item for any finite $\tau\colon N\to\{0,1\}$ there are infinitely many $(t,k)\in N$ so that $\gamma(y)(t,k)(\ndash)$ extends $\tau$, and
            \item for any finite $\tau_1\colon N\to \{0,1\}$ and $\tau_2\colon N\to\{0,1\}$ there are infinitely many $(s,d)\in N$ so that $\gamma(y)(\ndash)(s,d)$ extends $\tau_1$ and $\gamma(x)(s,d)(\ndash)$ extends $\tau_2$.
        \end{itemize}
\end{lemma}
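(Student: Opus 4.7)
The plan is to exhibit a comeager set $G\subseteq 2^{(2^{<\N})^{<\N}\times\N\times\N}$ of ``good'' $\alpha$'s and verify both bullets for every $\alpha\in G$ and every pair $x,y$ as in the hypothesis. The set $G$ will be the intersection of countably many open dense sets, indexed by finite combinatorial templates that encode the desired local behavior of $\gamma(y)$ and $\gamma(x)$. For the first bullet, a template is a tuple $(\tau, M, \ell, \vec{u})$ with $\tau\colon N\to\{0,1\}$ a finite partial function, $M\in\N$, and $\vec{u}\in (2^{<\N})^\ell$ of common length large enough that the inputs $(\vec{u}\circ s_i, d_i)$ for $(s_i,d_i)\in\dom(\tau)$ are pairwise distinct in $(2^{<\N})^{<\N}\times\N$; the associated dense open set consists of those $\alpha$ admitting some $k\geq M$ with $\alpha(\vec{u}\circ s_i, d_i, k)=\tau(s_i,d_i)$ for every $(s_i,d_i)\in\dom(\tau)$. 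Openness is immediate; density follows by picking a fresh $k$ outside the support of any given finite forcing condition and setting $\alpha$ freely on the distinct inputs. The second bullet uses analogous but richer templates $(\tau_1,\tau_2, M, \vec u, \vec v)$ encoding both $y$-side and $x$-side constraints, with pairwise distinctness of the combined list of inputs. Countably many templates yield the comeager $G$.

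To verify the first bullet for $\alpha\in G$, given $\tau$ and a finite $F\subset N$, I aim to exhibit $(t,k)\notin F$ as required (iterating gives infinitely many). Pick $t\in\N^{<\N}$ whose image contains, for each pair $(s_i,d_i)\neq (s_j,d_j)\in\dom(\tau)$ with $|s_i|=|s_j|$ and $s_i\neq s_j$, some $r$ at which $y(r)$ separates the first position where $s_i$ and $s_j$ differ --- available by separation of $y$, applied to finitely many pairs. Then the tuples $(y\circ t)\circ s_i$ are pairwise distinct across different $s_i$. Taking $\vec u$ to be a sufficiently long initial restriction of $y\circ t$ and invoking the template $(\tau, M, \ell, \vec u)$ with $M$ so that every resulting $(t,k)$ lies outside $F$, we obtain $k\geq M$ with $\gamma(y)(t,k)(s_i,d_i)=\alpha(\vec u\circ s_i, d_i, k)=\tau(s_i,d_i)$ for all $(s_i,d_i)\in\dom(\tau)$.

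For the second bullet, given $\tau_1,\tau_2$ and finite $F\subset N$, the key is to choose $s\in\N^{<\N}$ so that $\im(s)$ contains: (a) for each pair $t'_1\neq t'_2$ of distinct same-length first coordinates from $\dom(\tau_1)$, a natural $n$ with $y(t'_1(r))(n)\neq y(t'_2(r))(n)$ for some $r$ at which $t'_1(r)\neq t'_2(r)$ --- available by injectivity of $y$; and (b) for each pair $s'_1\neq s'_2$ of distinct same-length first coordinates from $\dom(\tau_2)$, some $i$ such that $x(i)$ separates the first position where $s'_1$ and $s'_2$ differ --- available by separation of $x$. Finitely many requirements are imposed, and infinitely many $s$ satisfy them. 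For $d$ chosen larger than all $k'$'s in $\dom(\tau_1)$ and all $d'$'s in $\dom(\tau_2)$, the combined list of type-(i) inputs $((y\circ t')\circ s, d, k')$ and type-(ii) inputs $((x\circ s)\circ s', d', d)$ is pairwise distinct, and invoking the corresponding template yields the required $d$ outside $F$.

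The main obstacle is the bookkeeping required to cast the density conditions in a countable template-indexed form, so that a single comeager $G$ works simultaneously for all the uncountably many pairs $x,y$ satisfying the hypotheses. The separation and injectivity hypotheses are exactly what is needed to instantiate any given template via a choice of $t$ or $s$; a secondary subtlety in the second bullet is the dual role of the parameter $s$, which must simultaneously capture enough of $y$'s injectivity structure and $x$'s separation witnesses to make all the inputs distinct.
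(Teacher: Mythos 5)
Your proposal is correct and follows essentially the same strategy as the paper: fix the finite data, use separation of $x$ (or $y$) and injectivity of $y$ to ensure the tuple arguments fed into $\alpha$ are pairwise distinct once $s$ (respectively $t$) is chosen rich enough, and observe that the resulting density requirements on $\alpha$ range over a countable family of finite templates independent of $x,y$. The only cosmetic difference is that the paper proves the second bullet and deduces the first by symmetry (the $\gamma(y)$-condition in the first bullet is literally the $\gamma(x)$-condition from the second with $y$ in place of $x$, hence needs only separation of $y$), whereas you verify both bullets directly.
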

\begin{proof}
Consider the second bullet, which corresponds to condition (b) Lemma~\ref{lemma: main technical sufficient condition for genericity}. Fix finite partial function $\tau_1\colon Y\to \{0,1\}$, $\tau_2\colon S\to\{0,1\}$. Recall the definitions. Fix $d,k\in \N$ and $s,t\in \N^{<\N}$, $s=\left(s_1,\dots,s_m\right)$, $t=\left(t_1,\dots,t_l\right)$ for some $m,l$.     We abbreviate
    \begin{equation*}
        \begin{split}
     x[s,t] & = (x(s_1)\circ t,\dots, x(s_m)\circ t)\in (2^{<\N})^m, \\
     y[t,s] & = (y(t_1)\circ s,\dots,y(t_l)\circ s) \in (2^{<\N})^l. 
        \end{split}
    \end{equation*}
    Then 
    \begin{equation*}
    \begin{split}
        \gamma(x)(s,d)(t,k) & = \alpha(x[s,t],k,d),\\
        \gamma(y)(t,k)(s,d) & =\alpha(y[t,s],d,k).
    \end{split}
    \end{equation*}
    So we need to find infinitely many $(s,d)$ for which 
    \begin{equation*} \hspace{1cm}(\star)\hspace{-2cm}
    \begin{split}
       \hspace{1cm}    \alpha(x[s,t],k,d) & =\tau_2(t,k)\textrm{, for all } (t,k)\textrm{ in the domain of }\tau_2.\\
       \alpha(y[t,s],d,k) & =\tau_1(t,k)\textrm{, for all } (t,k)\textrm{ in the domain of }\tau_1.
    \end{split}
    \end{equation*}
Since $x$ is separated, given $t_1\neq t_2\in \N^{<\N}$ there is some $i\in\N$ so that $x(i)\circ t_1 \neq x(i)\circ t_2$. 
Construct a sequence $s^*\in \N^{<\N}$ so that for any two distinct $t_1,t_2$ in the domain of $\tau_2$ there is some $i$ so that $x(s^*_i)\circ t_1 \neq x(s^*_i)\circ t_2$. Note that if $s$ is a sequence which contains $s^*$ then $x[s,t_1]\neq x[s,t_2]$, for any distinct $t_1,t_2$ in the domain of $\tau_2$.

Since $y$ is injective, for any $t_1\neq t_2$ the finite sequences of reals $y\circ t_1$ and $y\circ s_1$ are not equal. Then for any $s$ whose range contains a long enough initial segment of $\N$ (depending on $t_1,t_2$), $y[t_1,s]\neq y[t_2,s]$. We may find some $s^{**}$ so that for any $s$ which contains $s^{**}$, for any distinct $t_1,t_2$ in the domain of $\tau_1$, $y[t_1,s]\neq y[t_2,s]$.

Fix $s$ which contains both $s^*$ and $s^{**}$. Then for any $(t_1,k_1),(t_2,k_2)$ from the domain of $\tau_2$ or $\tau_1$, if the tuples $(t_1,k_1), (t_2,k_2)$ are distinct, then 
\begin{itemize}
    \item the tuples $(x[s,t_1],k_1), (x[s,t_2],k_2)$ are distinct, and
    \item the tuples $(y[t_1,s],k_1),(y[t_2,s],k_2)$ are distinct.
\end{itemize} 
We claim that, for a generic choice of $\alpha$, there are infinitely many $d\in\N$ for which $(\star)$ holds with $(s,d)$. It suffice to prove that for any finite number $M$ there is a dense open set of $\alpha\in 2^{(2^{<\N})^{<\N}\times\N\times\N}$ for which there are at least $M$ many values of $d$ so that $(\star)$ holds for $(s,d)$. Indeed, given any finite amount of information about $\alpha$, for a large enough $d$ we have that
\begin{itemize}
    \item $\alpha(x[s,t],k,d)$, $\alpha(y[s,t],d,k)$ is not yet defined, 
    \item and the tuples $(y[t,s],d,k), (x[s,t],k,d)$ respectively are distinct for distinct $(t,k)$ in the domains of either $\tau_1,\tau_2$.
\end{itemize}
We may therefore extend $\alpha$ by adding arbitrarily many values of $d$ for which $(\star)$ holds for $(s,d)$. Note that there are only countably many dense open sets for $\alpha\in 2^{(2^{<\N})^{<\N}\times\N\times\N}$ involved in the argument above, independently of $x,y$.

Finally, note that the property for $\gamma(y)$ in the first bullet is the same as the property for $\gamma(x)$ in the second bullet. Since $y$ is separated, which is the only assumption we made on $x$ in the proof, we conclude the first bullet as well.


\end{proof}

\subsection{A variation for the $n=1$ construction}\label{section: variation of n=1 case construction}
We will also modify the construction in the $n=1$ case, as outlined in Section~\ref{subsection : warmup n=1 case}, to be compatible with the above.
Fix a continuous function $\beta_0\colon (2^\N)^{<\N}\to (2^{N})^\N$ (which will be chosen to be ``sufficiently generic'').
Define $\gamma_0\colon (2^\N)^\N\to (2^{N})^{N}$ by
    \begin{equation*}
        \gamma_0(x)(t,k)=\beta_0(x\circ t)(k)\textrm{, for }t\in \N^{<\N}, k\in\N.
    \end{equation*}

\begin{remark}\label{remark: gamma0}
    \begin{itemize}
        \item The function $\gamma_0$ is continuous.
        \item The function $\gamma_0$ is a homomorphism from the orbit equivalent relation 
        \begin{equation*}
            \mathrm{Sym}(\N){\color{blue}\curvearrowright}(2^\N)^{\color{blue}\N} \textrm{ to } \mathrm{Sym}(\N^{<\N}\times\N){\color{blue}\curvearrowright}(2^{\N^{<\N}\times\N})^{\color{blue}\N^{<\N}\times\N}.
        \end{equation*}
    \end{itemize}
\end{remark}

The function $\beta_0$ is constructed as follows. Fix comeager sets $C_n\subset ((2^N)^\N)^n$. Fix a continuous $\alpha_0\colon 2^\N\to (2^N)^\N$ with the property that for any pairwise distinct $x_1,\dots,x_n\in 2^\N$, $(\alpha_0(x_1),\dots,\alpha_0(x_n))\in C_n$ (see~\cite[Theorem~19.1]{Kechris-DST-1995}). Define $\beta_0 \colon (2^\N)^{<\N}\to (2^N)^\N$ by $\beta_0 = \alpha_0\circ \iota$, where $\iota\colon (2^\N)^{<\N}\hookrightarrow 2^\N$ is a continuous injective map.
\begin{Notation}
    We will say ``for almost any $\beta_0$'' or ``for a generic $\beta_0$'' to mean $\beta_0$ constructed with an appropriate choice of the comeager sets $C_n$.
\end{Notation}

As in Lemma~\ref{lemma: examples of main construction}, consider the following example.

\begin{lemma}\label{lemma: examples of main construction n=1 case}
Suppose $x\in (2^\N)^\N$ is injective. Then for any comeager $C\subset (2^N)^N$, for a generic $\beta_0$, $\gamma_0(x)\in (2^N)^N$ satisfies the assumption in Lemma~\ref{lemma: main technical sufficient condition for genericity n=1 case}, with $X=N$, $Y_1=N$, and $M_1=\N$. That is, for $C'\subset (2^N)^\N$ given by Lemma~\ref{lemma: main technical sufficient condition for genericity n=1 case}, for any finite partial injective $\tau\colon \N\to N$ there is an extension $\sigma\colon\N\to N$ of $\tau$ so that $\gamma_0(x)\circ\sigma \in C'$.

\end{lemma}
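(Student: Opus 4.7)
The plan is to use the identity $\gamma_0(x)(t,k) = \alpha_0(z_t)(k)$, where $z_t := \iota(x \circ t) \in 2^\N$, together with the observation that $t \mapsto z_t$ is an injection $\N^{<\N} \hookrightarrow 2^\N$ (since $x$ is injective and $\iota$ is injective). Consequently, $\gamma_0(x)$ is governed by the restriction of $\alpha_0$ to the countable set $Z^{\ast} = \{z_t : t \in \N^{<\N}\}$, and different values of $t$ contribute independent ``rows'' $\alpha_0(z_t) \in (2^N)^\N$.

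First I would engineer the comeager sets $C_n \subset ((2^N)^\N)^n$ in the construction of $\alpha_0$ to enforce two genericity requirements simultaneously. (a) For any distinct reals $z_1, \ldots, z_n$ and any $i \leq n$, $\alpha_0(z_i) \in (2^N)^\N$ has dense image in $2^N$. (b) For any tuple of distinct pairs $((i_1, k_1), \ldots, (i_m, k_m))$ with $i_j \leq n$ and $k_j \in \N$, the continuous projection $(a_1, \ldots, a_n) \mapsto (a_{i_1}(k_1), \ldots, a_{i_m}(k_m))$ carries $C_n$ into any prescribed comeager subset of $(2^N)^m$. Both amount to countably many comeager conditions on the $C_n$'s, so they hold for a generic $\beta_0$. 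The role of (b) is that, for any finite partial injective $\tau$ with $F := \mathrm{dom}(\tau)$ and $\tau(j) = (t_j, k_j)$, the pairs $(z_{t_j}, k_j)_{j \in F}$ are distinct (by injectivity of $\tau$), so the fixed tuple $(\gamma_0(x)(\tau(j)))_{j \in F} = (\alpha_0(z_{t_j})(k_j))_{j \in F}$ lands in any prescribed comeager subset of $(2^N)^F$.

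Given $\tau$ and such a generic $\beta_0$, I would then apply the Baire Category Theorem on the Polish space $\mathcal{P}_\tau$ of total injective extensions $\sigma\colon \N \to N$ of $\tau$, showing that the preimage of $C'$ under the continuous evaluation $\sigma \mapsto \gamma_0(x) \circ \sigma$ is comeager, hence non-empty. Writing $C' = \bigcap_n U_n$ with each $U_n$ dense open, it suffices to check that for any finite further extension $\tau' \supset \tau$ with $F' = \mathrm{dom}(\tau')$, some injective extension $\sigma$ of $\tau'$ yields $\gamma_0(x) \circ \sigma \in U_n$. By (b), the fixed tuple $(\gamma_0(x)(\tau'(j)))_{j \in F'}$ lies in the comeager Kuratowski--Ulam-favorable subset of $(2^N)^{F'}$ where the slice of $U_n$ is dense open in $(2^N)^{\N \setminus F'}$; this slice contains a basic open $\prod_{l \in L} W_l$, and for each free coordinate $l \in L \setminus F'$ property (a) lets me pick a fresh $\sigma(l) \in N \setminus \mathrm{range}(\tau')$ with $\gamma_0(x)(\sigma(l)) \in W_l$, by taking any previously unused $t$ and exploiting the density of the image of $\alpha_0(z_t)$ to find $k$. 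The main obstacle is precisely the tension between the rigidly prescribed fixed values forced by $\tau$ and the arbitrary shape of a comeager $C'$; folding the Kuratowski--Ulam genericity directly into the choice of $C_n$'s is the device that dissolves it, reducing the construction of $\sigma$ to a routine dovetailing argument.
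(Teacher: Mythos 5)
Your argument is correct, but it follows a genuinely different route from the paper's. The paper exploits the product structure $N=\N^{<\N}\times\N$ to build the extension in one shot: it fixes an enumeration $e$ of $\N^{<\N}$, sets $N_n=\set{e(i)}{i<n}$, fixes base bijections $s_n\colon\N\to N_n\times\N$, and chooses $C_n$ to lie inside $\hat{s}^{-1}D'$ for every bijection $s\colon\N\to N_n\times\N$ agreeing with $s_n$ on all but finitely many values. Given $\tau$, it picks $n$ large enough that the image of $\tau$ is contained in $N_n\times\N$ and takes a bijection $\sigma$ of $\N$ onto $N_n\times\N$ extending $\tau$ and close to $s_n$; under the identification of $(2^N)^{N_n\times\N}$ with $((2^N)^\N)^n$, the element $\gamma_0(x)\circ\sigma$ is exactly $(\beta_0(x\circ e(0)),\dots,\beta_0(x\circ e(n-1)))\in C_n$, hence in $D'$. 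You instead argue by Baire category in the Polish space of injective extensions of $\tau$, showing the preimage of each $U_m$ (where $C'=\bigcap_m U_m$) is dense open via a Kuratowski--Ulam step (your (b)) for the coordinates fixed by a finite extension $\tau'\supset\tau$ and a dense-image step (your (a)) for the remaining finitely many open constraints. Both routes load the same sort of countable genericity into the $C_n$'s; the paper's single bijection onto the ``rectangle'' $N_n\times\N$ makes membership in $D'$ immediate, whereas your BCT argument is more systematic and does not rely on being able to produce such a rectangular extension. One presentational caveat: as phrased, your (b) asks the projection of $C_n$ to land in \emph{any} prescribed comeager subset of $(2^N)^m$, an uncountable and therefore unsatisfiable demand. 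You do note afterwards that only countably many conditions are needed, so the intent is clear, but (b) should be stated from the start for the countable family of Kuratowski--Ulam good sets arising from the $U_m$'s and the finite sets $F'\subset\N$.
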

\begin{proof}
We describe how to choose the sets $C_n\subset ((2^\N)^N)^n$ in the definition of $\beta_0$. 
Fix a bijection $e\colon \N\to \N^{<\N}$ and let $N_n = \set{e(i)}{i < n}$, for $0<n$. For each $n\in\N$, fix a bijection $s_n \colon \N\to N_n \times \N$, inducing a homeomorphism $\hat{s}_n \colon (2^N)^{N_n\times\N}\to (2^N)^\N$. In particular, $\hat{s}_n^{-1}D'$ is dense open in $(2^N)^{N_n\times \N}$, which we identify with $((2^N)^\N)^{N_n}$, and in turn with $((2^N)^\N)^n$. Choose comeager $C_n$ which is a subset of $\hat{s}^{-1} D'$ for any bijection $s\colon \N\to N_n\times\N$ which agrees with $s_n$ on all but finitely many values.
Since $x$ is injective, $x\circ t_1 \neq x\circ t_2$ for distinct $t_1,t_2\in \N^{<\N}$. By the choice of $\beta_0$, $\seq{\beta_0(x\circ e(0)), \dots, \beta_0(x\circ e(n-1))}\in C_n$ for all $0<n$. 

Let $\tau\colon \N\to N$ be a finite partial injection. Recall that $N=\N^{<\N}\times \N$. Choose $n$ so that the image of $\tau$ is included in $N_n \times \N$. Let $s \colon \N\to N_n\times \N$ be a bijection which extends $\tau$ and agrees with $s_n$ on all but finitely many values. Note that $\hat{s}^{-1}(\gamma_0(x)\circ s)\in (2^N)^{N_n\times \N}$, which is identified with $\seq{\beta_0(x\circ e(0), \dots, \beta_0(x\circ e(n-1)))}$, is in $C_n$, and therefore $\gamma_0\circ s \in D'$, as required.   
\end{proof}

We will also need the following variation of Lemma~\ref{lemma: examples of main construction}.

\begin{lemma}\label{lemma: example of main construction - combination}
    For generic $\alpha$ and $\beta_0$ the following holds. Suppose $x\in (2^\N)^\N$ is injective, and $y\in (2^\N)^\N$ is injective and separated. The pair $(\zeta,\xi_1) = (\gamma_0(x),\gamma(y))\in (2^S)^X\times (2^X)^{Y_1}$, where $S=N$, $X = N$, and $Y_1=N$, satisfies the assumptions (2) in Lemma~\ref{lemma: main technical sufficient condition for genericity}. That is:
        \begin{itemize}
            \item for any finite $\tau\colon N\to\{0,1\}$ there are infinitely many $(t,k)\in N$ so that $\gamma(y)(t,k)(\ndash)$ extends $\tau$, and
            \item for any finite $\tau_1\colon N\to \{0,1\}$ and $\tau_2\colon N\to\{0,1\}$ there are infinitely many $(s,d)\in N$ so that $\gamma(y)(\ndash)(s,d)$ extends $\tau_1$ and $\gamma(x)(s,d)(\ndash)$ extends $\tau_2$.
        \end{itemize}
\end{lemma}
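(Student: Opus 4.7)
The plan is to decouple the two bullets. The first bullet concerns only $\gamma(y)$, and since the hypothesis ``$y$ injective and separated'' is identical here, it will follow immediately from the first bullet of Lemma~\ref{lemma: examples of main construction} (whose proof uses only that $y$ is separated). The remaining work is the second bullet, where, reading $\gamma_0(x)$ for $\zeta$, I will produce for each pair of finite $\tau_1,\tau_2\colon N\to\{0,1\}$ infinitely many $(s,d)\in N$ such that
\begin{itemize}
\item[(i)] $\alpha(y[t,s],d,k)=\tau_1(t,k)$ for all $(t,k)\in\dom(\tau_1)$, and
\item[(ii)] $\beta_0(x\circ s)(d)(t,k)=\tau_2(t,k)$ for all $(t,k)\in\dom(\tau_2)$.
\end{itemize}

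Following the proof of Lemma~\ref{lemma: examples of main construction}, I will use that $y$ is separated to find $s^{**}\in\N^{<\N}$ so that for any $s$ extending $s^{**}$ with pairwise distinct entries, the tuples $y[t,s]$ are pairwise distinct across $t\in\dom(\tau_1)$. By injectivity of $x$, the tuple $x\circ s$ will then consist of pairwise distinct reals. Fix any such $s$.

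The key new ingredient is the treatment of (ii): separatedness of $x$ is unavailable, and (ii) in any case depends on $\beta_0$ rather than on $\alpha$, so I will arrange it by a prior generic choice of $\beta_0$. For each finite partial $\tau_2\colon N\to\{0,1\}$ the set
\begin{equation*}
E_{\tau_2}=\{w\in(2^N)^\N:\exists^{\infty}\,d,\ w(d)\supset\tau_2\}
\end{equation*}
is comeager in $(2^N)^\N$. I will strengthen each comeager $C_n$ used in the construction of $\alpha_0$ to additionally require that every coordinate lies in $E_{\tau_2}$, for every finite $\tau_2$; this imposes only countably many further constraints. Since $x\circ s$ is a tuple of distinct reals, the construction then forces $\beta_0(x\circ s)=\alpha_0(\iota(x\circ s))\in E_{\tau_2}$, and hence the set $D_2(s)=\{d\in\N:\beta_0(x\circ s)(d)\supset\tau_2\}$ is infinite.

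Finally, I will exhibit infinitely many $d\in D_2(s)$ at which (i) also holds, by the same $\alpha$-genericity argument as in Lemma~\ref{lemma: examples of main construction}: given any finite fragment of $\alpha$ and any finite $D_0\subset\N$, pick $d\in D_2(s)\setminus D_0$ large enough that $\alpha$ is undefined on every relevant tuple $(y[t,s],d,k)$ (possible since $D_2(s)$ is infinite), and extend $\alpha$ to force (i) at this $d$; the distinctness of the $y[t,s]$'s ensures these extension constraints are consistent. The principal obstacle is that $\alpha$ and $\beta_0$ are independent generic objects, so the proof of Lemma~\ref{lemma: examples of main construction} cannot be imitated verbatim; what makes the argument go through is that condition (ii) is a purely $\beta_0$-side comeager condition on the fiber $\beta_0(x\circ s)\in(2^N)^\N$, which can be absorbed into the definition of $\beta_0$ in advance, after which the $\alpha$-density step proceeds inside the already-infinite set $D_2(s)$.
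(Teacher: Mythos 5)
Your decoupling of the two bullets is correct, and the identification of condition (i) as an $\alpha$-condition and (ii) as a $\beta_0$-condition is accurate; there is also a harmless slip where you credit the distinctness of the tuples $y[t,s]$ (for $t\in\dom(\tau_1)$) to separatedness of $y$, whereas, as in the paper's proof of Lemma~\ref{lemma: examples of main construction}, it is injectivity of $y$ that gives this (both hypotheses are available here). The genuine problem is the order in which you fix $\alpha$ and $\beta_0$. You fix $\beta_0$ first, arranging $C_1$ so that $D_2(s)=\{d : \beta_0(x\circ s)(d)\textrm{ extends }\tau_2\}$ is always infinite, and then run a density argument for $\alpha$ ``inside $D_2(s)$.'' But ``for generic $\alpha$'' can only mean: $\alpha$ lying in the intersection of a \emph{countable} family of dense open subsets of $2^{(2^{<\N})^{<\N}\times\N\times\N}$ chosen in advance and independently of $x,y$, which is exactly the point the paper flags at the end of the proof of Lemma~\ref{lemma: examples of main construction}. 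The dense sets your step needs (``at least $M$ many $d\in D_2(s)$ with $\alpha(y[t,s],d,k)=\tau_1(t,k)$ for all $(t,k)\in\dom(\tau_1)$'') are parametrized by $D_2(s)$, hence by $x$ through $\beta_0(x\circ s)$; as $x$ varies over injective sequences, these range over uncountably many infinite subsets of $\N$. No single $\alpha$ meets uncountably many dense open sets, and in fact for any fixed $\alpha$ the set $G=\{d:\forall(t,k)\,\alpha(y[t,s],d,k)=\tau_1(t,k)\}$ is infinite but co-infinite, so nothing in your construction rules out that some $D_2(s)$ is disjoint from $G$.

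The repair is to reverse the order. First choose $\alpha$ generically for the countably many \emph{combinatorial} dense conditions: for each finite configuration $\kappa$ of pairwise distinct pairs $(a_i,n_i)\in(2^{<\N})^{<\N}\times\N$ with prescribed bits $\epsilon_i$, the set $G_\kappa=\{d\in\N : \forall i\,\alpha(a_i,d,n_i)=\epsilon_i\}$ is infinite; these conditions make no reference to $x,y$. Only \emph{then} strengthen $C_1$ so that for every finite $\tau\colon N\to 2$ and every such $\kappa$, each $w\in C_1$ has $\{d\in G_\kappa : w(d)\textrm{ extends }\tau\}$ infinite; this is a countable intersection of comeager sets, hence comeager. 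Given $x,y,s,\tau_1,\tau_2$, let $\kappa$ be the configuration formed by the (pairwise distinct, by injectivity of $y$ and choice of $s$) tuples $y[t,s]$ with bits $\tau_1(t,k)$; since $x\circ s$ has distinct entries, $\beta_0(x\circ s)\in C_1$, and the strengthened $C_1$ produces infinitely many $d\in G_\kappa$ with $\beta_0(x\circ s)(d)$ extending $\tau_2$, so infinitely many $(s,d)$ meet both bullets. The asymmetry is essential: the $\alpha$-side constraints are countable only in abstract combinatorial form, whereas the $\beta_0$-side constraints remain comeager after conditioning on the already-chosen $\alpha$.
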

Most aspects of the proof are similar to Lemma~\ref{lemma: examples of main construction}

\subsection{A change of countable base set}\label{section: change of countable set}
Recall Notation~\ref{notation: N}: $N=\N^{<\N}\times \N$. 

\begin{defn}[Definition of $F_n^\ast$ on $X_n^\ast$]
Consider the space $((2^N)^N)^\omega$, which is naturally homeomorphic to $((2^\N)^\N)^\omega$. We may define $F_n^\ast$ on $X_n^\ast\subset ((2^N)^N)^n$, as $F_n$ was defined on $X_n\subset ((2^\N)^\N)^n$ in Section~\ref{Section: def of Fn into}. See Section~\ref{section: appendix} for more details.
\end{defn}
The equivalence relation $F_n^\ast$ is isomorphic to $F_n$ via a homeomorphism of their domains. To prove that $F_n$ retains its complexity on comeager sets, it suffices to prove that $F_n^\ast$ retains its complexity on comeager sets.

\subsection{The homomorphism $f\colon F_n \to F_n^\ast$}\label{subsec: defn of homomorphism f}
\begin{defn}\label{defn: the homomorphism f}
    Define $f\colon ((2^\N)^\N)^n\to ((2^N)^N)^n$ by $f = \gamma_0 \times \gamma^{n\setminus \{0\}}$. That is,
\begin{equation*}
    f(\seqq{\xi_i}{i<n})=\seqq{\gamma_0(\xi_0), \gamma(\xi_i)}{0<i<n}.
\end{equation*}
\end{defn}
\begin{remark}\label{remark: reduction F_n to F_n generically}
 For a generic choice of $\beta_0$ and $\alpha$,
\begin{itemize}
    \item $f$ sends $X_n$ to $X^\ast_n$, the domains of $F_n$ and $F^\ast_n$ respectively.
    \item $f$ is a continuous reduction of $F_n$ to $F^{\ast}_n$. 
\end{itemize}    
\end{remark}

\section{Complexity on comeager sets}\label{section: complexity on comeager sets}

In this section we prove the following corollary of the main theorem (see Section~\ref{subsection : spectrum of the meager ideal}). This will illustrate the construction of the map $f$, and the techniques developed so far.
\begin{thm}\label{thm: warmup proof retains complexity on comeager sets}
    For $1\leq n \leq \omega$, $F_n$ retains its complexity on comeager sets. That is, ${F_n}\leq_B{F_n\restriction C}$ for any comeager $C\subset X_n$.
\end{thm}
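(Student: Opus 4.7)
The plan is to adapt the strategy of Section~\ref{subsection : warmup n=1 case} using the map $f = \gamma_0 \times \gamma^{n\setminus\{0\}}$ of Definition~\ref{defn: the homomorphism f} together with the technical lemmas of Sections~\ref{Section: permutations} and~\ref{section: the main construction}. First I would transfer the problem to $F_n^\ast$: since $F_n$ and $F_n^\ast$ are isomorphic via the homeomorphism $((2^\N)^\N)^n \cong ((2^N)^N)^n$ coming from a bijection $\N \cong N$ (Section~\ref{section: change of countable set}), and any comeager $C \subset X_n$ pulls back to a comeager $C^\flat \subset X_n^\ast$, it suffices to build a Borel reduction $F_n \to F_n^\ast \restriction C^\flat$ and compose with the homeomorphism. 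I may further assume $C^\flat \subset X_n^{\ast,\mathrm{inj}}$, so that $F_n^\ast \restriction C^\flat$ is induced by the action $a_n\colon (S_\infty)^n \curvearrowright X_n^{\ast,\mathrm{inj}}$ of Section~\ref{subsection : orbit ER presentation}. Let $\tilde{C} = \set{y \in X_n^\ast}{\forall^\ast g \in (S_\infty)^n\, g \cdot_{a_n} y \in C^\flat}$ be the Vaught transform of $C^\flat$, a comeager $a_n$-invariant set.

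For a generic choice of $\alpha$ and $\beta_0$, Remark~\ref{remark: reduction F_n to F_n generically} gives that $f\colon X_n \to X_n^\ast$ is a continuous reduction of $F_n$ to $F_n^\ast$. The central claim is that for such generic parameters, $f(x) \in \tilde{C}$ for every $x$ in some comeager $F_n$-invariant set $B \subset X_n$. Granting this, \cite[Theorem~18.6]{Kechris-DST-1995} applied to the Borel set $\set{(x,g) \in B \times (S_\infty)^n}{g \cdot_{a_n} f(x) \in C^\flat}$ produces a Borel map $\rho\colon B \to (S_\infty)^n$ with $\rho(x) \cdot_{a_n} f(x) \in C^\flat$ for every $x \in B$. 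Since group elements preserve $F_n^\ast$-class, the map $x \mapsto \rho(x) \cdot_{a_n} f(x)$ is then a Borel reduction of $F_n \restriction B$ to $F_n^\ast \restriction C^\flat$. A short argument of the same flavor as the one in Section~\ref{subsection : spectrum of the meager ideal}, but modified so as not to invoke Theorem~\ref{Theorem : Main} (which is not yet available), extends this to a reduction on all of $X_n$, exploiting that $B$ is $F_n$-invariant and treating the meager invariant complement separately. Composing with the homeomorphism $X_n^\ast \cong X_n$ then yields the desired reduction $F_n \to F_n \restriction C$.

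The hard part will be verifying the central claim that $f(x) \in \tilde{C}$ for comeager $x$. Fix a countable base of dense open sets $U_m \subset X_n^\ast$ with $C^\flat = \bigcap_m U_m$. It suffices to show that for each $m$ and each $x$ in a comeager set, $\set{g \in (S_\infty)^n}{g \cdot_{a_n} f(x) \in U_m}$ is dense in $(S_\infty)^n$. Unfolding the action $a_n$, which interleaves the permutation action $a$ on the $k$-th coordinate with the action $b$ of the next factor on the $(k+1)$-st coordinate, Kuratowski-Ulam reduces the density condition on $(S_\infty)^n$ to density conditions applied coordinate by coordinate. These conditions are precisely the hypotheses of Lemma~\ref{lemma: main technical sufficient condition for genericity n=1 case} at the initial coordinate (involving $\gamma_0(x(0))$) and Lemma~\ref{lemma: main technical sufficient condition for genericity} at each higher coordinate (involving $\gamma(x(k+1))$ paired with its predecessor). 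They are verified by Lemmas~\ref{lemma: examples of main construction}, \ref{lemma: examples of main construction n=1 case}, and \ref{lemma: example of main construction - combination}, whenever each $x(i)$ is injective and separated --- a comeager condition on $X_n$ --- and whenever the parameters $\alpha, \beta_0$ are generic, which imposes only countably many dense-open conditions that are independent of $x$. The main delicacy, and what I expect to be the principal obstacle, is coordinating these genericity requirements uniformly across all $n$ coordinates simultaneously; in particular, the $n = \omega$ case requires that a countable intersection of the comeager sets produced at each level remains comeager, which forces a careful enumeration of the auxiliary dense-open conditions level by level.
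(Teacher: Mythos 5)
Your overall strategy matches the paper's proof closely: transfer to $F_n^\ast$, push $x$ forward via $f=\gamma_0\times\gamma^{n\setminus\{0\}}$ for generic $\alpha,\beta_0$, verify $f(x)$ lands in the Vaught transform $\tilde C$ by a coordinate-by-coordinate genericity argument via Lemma~\ref{lemma: main technical sufficient condition for genericity} and Lemma~\ref{lemma: main technical sufficient condition for genericity n=1 case}, and finish with large section uniformization. However, there is a genuine gap in how you handle the domain of the resulting map.

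You observe that the genericity lemmas (Lemmas~\ref{lemma: examples of main construction}, \ref{lemma: examples of main construction n=1 case}, \ref{lemma: example of main construction - combination}) only yield $f(x)\in\tilde C$ when each $x(i)$ is injective (and separated for $i>0$), and you note this is a comeager condition, producing a reduction of $F_n\restriction B$ to $F_n^\ast\restriction C^\flat$ for a comeager $B$. To conclude, you gesture at ``a short argument\ldots treating the meager invariant complement separately.'' This is precisely where the argument breaks: extending a reduction defined on a comeager set $B$ to one defined on all of $X_n$ is equivalent to showing $F_n\leq_B F_n\restriction B$, which is the very statement you are trying to prove (and $B$ as you describe it is not even $F_n$-invariant, since injectivity and separatedness are properties of the chosen enumeration, not the $F_n$-class). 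There is no generic ``treat the complement separately'' step here, because $F_n$ restricted to a meager invariant complement can in principle be as complicated as $F_n$ itself.

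The missing ingredient is Lemma~\ref{lemma: technical reduction to inj and sep}: a Borel reduction $\psi\colon F_n\to_B F_n$ (on the whole of $X_n$) whose image lands inside the set of injective and separated sequences. Pre-composing with $\psi$ is what converts ``$f(x)\in\tilde C$ for injective and separated $x$'' into a statement true of $f(\psi(x))$ for \emph{every} $x\in X_n$, so that the uniformized map $x\mapsto\rho(x)\cdot f(\psi(x))$ is a total Borel reduction. Without $\psi$ (or some substitute achieving the same pre-processing), your argument does not close.
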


Recall that our main construction is useful to deal with injective and separated sequences (see Lemma~\ref{lemma: examples of main construction}). The following lemma gives a reduction to the case of injective and separated sequences. Its proof is deferred to Section~\ref{section: appendix}.
\begin{lemma}\label{lemma: technical reduction to inj and sep}
    There is a Borel reduction $\psi\colon F_n\to_B F_n$ so that for any $x\in X_n$, if $x\in \mathrm{Image}(\psi)$ then $x(i)\in (2^\N)^\N$ is injective for $i<n$ and separated for $0<i<n$.
\end{lemma}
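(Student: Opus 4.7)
The plan is to construct $\psi$ by replacing the enumerations $x(i)$ with canonical ``thickened'' enumerations level by level, where the thickening adds a fixed infinite padding to each set $A^x_k$ to force injective enumerations and appends all singletons at the next level to force separation. Concretely, fix an injection $\iota\colon 2^\N \to 2^\N$ sending $a$ to the real whose $0$-th bit is $0$ and whose subsequent bits are those of $a$, and a disjoint infinite padding set $P_0\subset 2^\N$ consisting of reals whose $0$-th bit is $1$. Set $B^x_1 := \iota(A^x_1)\cup P_0$, which is always countably infinite, and take $\psi(x)(0)$ to be a Borel injective enumeration of $B^x_1$ obtained by interleaving a fixed enumeration of $P_0$ with the ``newly seen'' elements $\iota(x(0)(k))$, using extra elements of $P_0$ in the slots left vacant when $A^x_1$ is finite.

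Recursively, for $1\leq k < n$, define $B^x_{k+1}\subseteq \mathcal{P}(B^x_k)$ as the union of three canonical disjoint pieces: the image of $A^x_{k+1}$ under the pointwise lift $\iota_k$ of $\iota_{k-1}$; the set of all singletons $\{\{b\} : b\in B^x_k\}$; and a canonical disjoint infinite padding $Q_k\subseteq\mathcal{P}(B^x_k)$. Then let $\psi(x)(k)\in(2^\N)^\N$ be an injective enumeration of $B^x_{k+1}$, written as characteristic functions with respect to $\psi(x)(k-1)$, with the three pieces occupying three pre-assigned disjoint infinite blocks of indices in $\N$.

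The verification splits into four routine checks: (a) $\psi(x)\in X_n$, where conditions (1) and (2) of Definition~\ref{defn : domain of Fn} hold because every $b\in B^x_k$ lies in the singleton $\{b\}\in B^x_{k+1}$ and every element of $B^x_{k+1}$ is nonempty, and condition (3) holds vacuously because $\psi(x)(k-1)$ is injective; (b) $\psi$ is Borel by inspection; (c) $\psi$ is a reduction $F_n\to_B F_n$, because the assignment $A^x_n\mapsto A^{\psi(x)}_n = B^x_n$ is Borel and injective on equivalence classes --- the pieces $\iota_{n-1}(A^x_n)$, $\{\{b\}:b\in B^x_{n-1}\}$, and $Q_{n-1}$ are canonically identifiable and can be peeled off, after which $\iota_{n-1}$ is inverted to recover $A^x_n$; and (d) the image lies in the injective/separated part of $X_n$ by construction, since the singletons at each level separate every pair of distinct points at the preceding level, and $\psi(x)(i)$ is injective by design.

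The main obstacle is the bookkeeping required to coordinate the three interleaved pieces at each level uniformly in a Borel fashion, so that the padding sets $Q_k$ remain disjoint from the canonical images $\iota_k(A^x_{k+1})$ and from the singleton pieces at every level, and so that the characteristic-function encoding of $\psi(x)(k)$ relative to $\psi(x)(k-1)$ matches the intended set-theoretic assignment in $\mathcal{P}(B^x_k)$. This is arranged by fixing, once and for all, a decomposition of $\N$ into three disjoint infinite Borel blocks at each level together with canonical bijections between these blocks and the three pieces of $B^x_{k+1}$; with this scaffolding in place the recursion carries through uniformly in $k$, yielding the desired $\psi$ for all $n\leq\omega$.
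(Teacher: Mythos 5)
Your overall plan --- pad each level with an infinite ``new'' set to force infinite (hence injective) enumerations, and append extra separating sets at the next level --- is the same strategy as the paper's. However, the specific choice of separating sets creates a genuine gap. You take $B^x_{k+1}$ to be the disjoint union of $\iota_k(A^x_{k+1})$, the collection of all singletons $\{\{b\} : b\in B^x_k\}$, and padding $Q_k$. But if $\iota_k$ is the pointwise lift (takes images elementwise), then $\iota_k(A^x_{k+1})$ and the singleton piece are \emph{not} disjoint: whenever $\{a\}\in A^x_{k+1}$ is a singleton subset of $A^x_k$, its image $\iota_k(\{a\})=\{\iota_{k-1}(a)\}$ is a singleton of $B^x_k$ and so already lies in the singleton piece. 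Because the singleton piece is fully determined by $B^x_k$ alone, this absorption destroys the injectivity of $A^x_n\mapsto B^x_n$ on invariants: if $A^x_n$ and $A^{x'}_n$ have the same lower levels and differ only in that $A^x_n$ contains an extra singleton of $A^x_{n-1}$, you get $B^x_n = B^{x'}_n$. Thus $\psi(x)\mathrel{F_n}\psi(x')$ while $x\not\mathrel{F_n}x'$, so $\psi$ is not a reduction, and your step (c) --- ``the pieces are canonically identifiable and can be peeled off'' --- fails precisely because the singleton piece cannot distinguish a singleton that came from $A^x_{k+1}$ from one that didn't.

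The paper sidesteps this by using unordered pairs rather than singletons: it sets $B_1 = A_1\cup S$ and $B_{k+1} = A_{k+1}\cup\{\{a,b\} : a\in A_k,\ b\in B_k\setminus A_k\}$. Since each added pair contains a ``new'' element $b\notin A_k$, it is never a subset of $A_k$ and hence never confusable with an element of $A_{k+1}$; the two pieces of $B_{k+1}$ are genuinely disjoint, and $A_i$ is recoverable as the set of elements of $B_i$ whose transitive closure avoids $S$. The pairs still separate any two distinct elements of $B_k$ (split into three cases depending on which of $u,v$ lie in $A_k$). If you want to keep your singleton-style scheme, the minimal fix is to tag: replace $\iota_k(Y)$ by something like $\{\iota_{k-1}(y):y\in Y\}\cup\{q\}$ for a fixed distinguished $q\in B^x_k\setminus\iota_{k-1}(A^x_k)$, so that the lifted sets always contain a marker and never coincide with the canonical singletons.
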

As discussed in  Section~\ref{section: change of countable set}, it suffices to prove that $F^\ast_n$ retains its complexity on comeager sets. Fix a comeager $C\subset X^\ast_n$. We will find a Borel reduction $f$ from $F_n$ to $F_n^\ast$ sending injective and separated sequences into $C$. This will show that $f\circ\psi$ is a reduction from $F_n$ to $F^\ast_n\restriction C$. 
\begin{prop}
For a generic choice of $\beta_0$ and $\alpha$, if $x\in X_n$ is such that $x(i)\in (2^\N)^\N$ is injective for all $i<n$ and separated for $0<i<n$, then  
\begin{equation*}
    (\forall^\ast g\in \mathrm{Sym}(N)^n)\, g\cdot f(x) \in C.
\end{equation*}
\end{prop}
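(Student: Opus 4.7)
The plan is to show that $f(x)$ lies in the Vaught transform $C^{\ast\ast}$ of $C$ under the continuous action $a_n\colon \mathrm{Sym}(N)^n \curvearrowright ((2^N)^N)^n$ (where I use double star to avoid clashing with $X_n^\ast$). Writing $C = \bigcap_m D_m$ with $D_m \subset X_n^\ast$ open dense, the set $\{g : g\cdot f(x) \in D_m\}$ is open by continuity, so it suffices to show it is dense for each $m$. I would proceed by induction on $n$, using Kuratowski--Ulam on the product group to reduce to Lemma~\ref{lemma: main technical sufficient condition for genericity n=1 case} at each step, with the genericity hypotheses provided by Lemmas~\ref{lemma: examples of main construction} and~\ref{lemma: examples of main construction n=1 case}.

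The base case $n=1$ is immediate: since $x(0)$ is injective, Lemma~\ref{lemma: examples of main construction n=1 case} guarantees that for a generic $\beta_0$ (absorbing all countably many $D_m$ into the comeager sets $C_n$ used in the construction of $\beta_0$), $\gamma_0(x(0)) = f(x)$ satisfies the hypothesis of Lemma~\ref{lemma: main technical sufficient condition for genericity n=1 case}, which then gives density. For the inductive step, set $y_0 = \gamma_0(x(0))$ and $y_i = \gamma(x(i))$ for $1 \leq i < n$, so that
\[
    (g_0, \ldots, g_{n-1}) \cdot_{a_n} f(x) = \bigl(h_0,\, \ldots,\, h_{n-2},\; g_{n-1} \cdot_a \tilde y\bigr),
\]
where $(h_0, \ldots, h_{n-2})$ is the image of $(y_0, \ldots, y_{n-2})$ under $a_{n-1}$ applied to $(g_0, \ldots, g_{n-2})$, and $\tilde y = (g_{n-2}, \mathrm{id}) \cdot_c y_{n-1}$ is $y_{n-1}$ after applying only the inner permutation $g_{n-2}$. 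Using Kuratowski--Ulam on $\mathrm{Sym}(N)^n = \mathrm{Sym}(N)^{n-1} \times \mathrm{Sym}(N)$ and on $D_m$ (which gives a comeager set $K \subset ((2^N)^N)^{n-1}$ on which the $D_m$-slice is open dense in $(2^N)^N$), the density of our target set reduces to two claims: \emph{(i)} for a comeager set of $(g_0, \ldots, g_{n-2}) \in \mathrm{Sym}(N)^{n-1}$, the tuple $(h_0, \ldots, h_{n-2})$ lies in $K$; and \emph{(ii)} for each such tuple, the set $\{g_{n-1} : g_{n-1} \cdot_a \tilde y \in \mathrm{slice}_{(h_0, \ldots, h_{n-2})}(D_m)\}$ is dense open in $\mathrm{Sym}(N)$.

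Claim~\emph{(i)} follows from the inductive hypothesis applied to $x|_{n-1} \in X_{n-1}$ (which is injective+separated by assumption) and the comeager target $K \cap X_{n-1}^\ast$. Claim~\emph{(ii)} follows from Lemma~\ref{lemma: main technical sufficient condition for genericity n=1 case} applied to $\tilde y$: its hypothesis with respect to the $D_m$-slice translates, after composing with $g_{n-2}^{-1}$ on the inner coordinate (which sends dense open sets to dense open sets), into the same hypothesis for $\gamma(x(n-1))$ with respect to a corresponding dense open set, and since $x(n-1)$ is separated, the combinatorial genericity property of $\gamma(x(n-1))$ supplied by Lemma~\ref{lemma: examples of main construction} (for a generic $\alpha$) yields what is needed. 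All countable collections of dense open sets generated in this argument—one family per $m$ per level of the recursion—are absorbed into a single generic choice of $\alpha$ and $\beta_0$ by a routine diagonal argument. The $n=\omega$ case is then reduced to the finite cases by noting that each open dense $D_m \subset X_\omega^\ast$ is a union of cylinders depending on finitely many coordinates. The main technical obstacle I anticipate is the bookkeeping around these generic conditions, and in particular checking that the inner-permutation step in claim~\emph{(ii)} does not destroy the combinatorial property supplied by Lemma~\ref{lemma: examples of main construction}; this reduces to the $\mathrm{Sym}(N)$-invariance of the defining density condition in Lemma~\ref{lemma: main technical sufficient condition for genericity n=1 case}.
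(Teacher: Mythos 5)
Your plan has a genuine gap at the inductive step, centered on which lemma controls the genericity of the last coordinate. After fixing $(g_0,\dots,g_{n-2})$ by Kuratowski--Ulam, you set $\tilde y = (g_{n-2},\mathrm{id})\cdot_c\gamma(x(n-1))$ and try to conclude via Lemma~\ref{lemma: main technical sufficient condition for genericity n=1 case} that $\{g_{n-1} : g_{n-1}\cdot_a\tilde y \in \mathrm{slice}\}$ is dense. But the hypothesis of Lemma~\ref{lemma: main technical sufficient condition for genericity n=1 case} is a strong, target-dependent condition: roughly, $\tilde y$ must hit a specific dense open set $D'$ along every finite partial injection. That kind of Mycielski-style hitting property is what the function $\beta_0$ was built to guarantee for $\gamma_0(x(0))$ (Lemma~\ref{lemma: examples of main construction n=1 case}). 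The function $\alpha$ underlying $\gamma$ has no such property: Lemma~\ref{lemma: examples of main construction} only gives $\gamma(x(i))$ the combinatorial ``extension'' conditions (2)(a)--(b) of Lemma~\ref{lemma: main technical sufficient condition for genericity}, which are target-independent and far weaker than the hypothesis of Lemma~\ref{lemma: main technical sufficient condition for genericity n=1 case}. So the step where you say the ``combinatorial genericity property of $\gamma(x(n-1))$ supplied by Lemma~\ref{lemma: examples of main construction}'' yields the hypothesis of Lemma~\ref{lemma: main technical sufficient condition for genericity n=1 case} does not go through.

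The fix, which is what the paper actually does, is to \emph{not} freeze $g_{n-2}$ outright. In the paper's inductive step one applies Lemma~\ref{lemma: main technical sufficient condition for genericity} (not the $n=1$ variant) to the pair $((h_{k-1},h_k)\cdot f(x)(k),\,(h_k,\mathrm{id})\cdot f(x)(k+1))$ with $S=X=Y_1=N$. The crucial extra freedom is the $\mathrm{Sym}(X)$ factor, which simultaneously re-permutes the outer index of level $k$ and the inner index of level $k+1$ --- exactly the coordinate you locked by freezing $g_{n-2}$. Under this joint action, condition (1) of Lemma~\ref{lemma: main technical sufficient condition for genericity} asks only for \emph{density} of $D_{\pi\cdot\zeta}$ after finite permutations $\pi$ of $X$, which is supplied cheaply by the inductive hypothesis $(\star_k)$ (comeager implies dense, and the Vaught-transform quantifier is invariant under composing with finite permutations); condition (2) is supplied by Lemma~\ref{lemma: examples of main construction}. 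Your claim~\emph{(ii)} tries to squeeze a comeager conclusion out of a single $\mathrm{Sym}(Y_1)$ action, and that is precisely what cannot be done with $\gamma$ alone. (There is also a secondary omission: the $k=0$ to $k=1$ transition needs the hybrid Lemma~\ref{lemma: example of main construction - combination}, since coordinate $0$ uses $\gamma_0$ while coordinate $1$ uses $\gamma$; and your $n=\omega$ reduction is too quick compared to the paper's generic ergodicity argument, though that is a more routine repair.)
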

\begin{proof}
    We prove by induction on $0\leq k<n$ that 
\begin{equation*}
(\star_k)\hspace{1cm}    \forall^\ast h\in \mathrm{Sym}(N)^{k+1}\textrm{ the fiber }C_{h\cdot f(x)\restriction k+1}\textrm{ is comeager in } ((2^N)^N)^{n\setminus k+1}.
\end{equation*}    
First, assume that $(\star_k)$ holds for $0 < k$, $k+1<n$, and prove that $(\star_{k+1})$ holds. 
\begin{claim}
    $\forall^\ast h = (h_0,\dots,h_k)\in \mathrm{Sym}(N)^{k+1}$, the pair 
\begin{equation*}
    ((h_{k-1},h_k)\cdot f(x)(k), (h_k,\mathrm{id})\cdot f(x)(k+1))    \in (2^N)^N\times (2^N)^N
\end{equation*}
satisfies the conditions of Lemma~\ref{lemma: main technical sufficient condition for genericity}, with $S = N$, $X = N$ and $Y_1 = N$, with respect to the comeager set
\begin{equation*}
    \set{(a,b)\in (2^N)^N\times (2^N)^N}{\textrm{the fiber }D=C_{(h\restriction k) \cdot (f(x)\restriction k),a,b}\subset ((2^N)^N)^{n\setminus k+2}\textrm{ is comeager}}.
\end{equation*}
\end{claim}
\begin{proof}[Proof of the claim]
    By assumption $(\star_k)$, $\forall^\ast h=(h_0,\dots,h_k)$, the set $D_{(h_{k-1},h_k)\cdot f(x)(k)}\subset (2^N)^N$ is comeager, for $D$ as above. It follows that $\forall^\ast h=(h_0,\dots,h_k)$, the set $D_{(h_{k-1},\pi\circ h_k)\cdot f(x)(k)}\subset (2^N)^N$ is comeager, for any finite permutation $\pi$ of $N$. This concludes condition (1) in Lemma~\ref{lemma: main technical sufficient condition for genericity}. 
    
Recall that $f(x)(k) = \gamma(x(k))$ and $f(x)(k+1) = \gamma(x(k+1))$. Since $x(k)$ is separated and $x(k+1)$ is injective and separated, it follows from Lemma~\ref{lemma: examples of main construction} that the pair $(f(x)(k), f(x)(k+1))$ satisfies condition (2) in Lemma~\ref{lemma: main technical sufficient condition for genericity}. Note that both parts (a) and (b) of condition (2) are invariant under the group action 
and so true for the pair
\[((h_{k-1},h_k)\cdot f(x)(k), (h_k,\mathrm{id})\cdot f(x)(k+1))\] as well.
This concludes the proof of the claim.
\end{proof}
We conclude from Lemma~\ref{lemma: examples of main construction} that
\begin{equation*}
\begin{split}
    (\forall^\ast (h_0,\dots,h_k)\in\mathrm{Sym}(N)^{k+1})(\forall^\ast(h'_{k},h_{k+1})\in\mathrm{Sym}(N)^2)\textrm{ the fiber } \\ C_{(h_0,\dots,h_{k-1})\cdot f(x)\restriction k,\, (h_{k-1},h'_k\cdot h_k) \cdot f(x)(k),\, (h'_{k} \cdot h_{k}, h_{k+1})\cdot f(x)(k+1)}\textrm{ is comeager in }((2^N)^N)^{n\setminus k+2},
\end{split}
    \end{equation*}
and therefore 
\begin{equation*}
        (\forall^\ast h\in\mathrm{Sym}(N)^{k+2})\textrm{ the fiber } C_{h\cdot f(x)\restriction k+2}\textrm{ is comeager in }((2^N)^N)^{n\setminus k+2}.
\end{equation*}

Finally, we prove the base case of the induction. Let $C_1$ be the set of all $x\in (2^N)^N$ so that the fiber $C_x\subset ((2^N)^N)^{n\setminus 1}$ is comeager. Then $C_1\subset (2^N)^N$ is comeager. 
    Recall that $f(x)(0) = \gamma_0 (x(0))$, and $x(0)\in (2^\N)^\N$ is injective by assumption. By Lemma~
    \ref{lemma: examples of main construction n=1 case} and Lemma~\ref{lemma: main technical sufficient condition for genericity n=1 case}, for a generic choice of $\beta_0$, 
    \begin{equation*}
        (\forall^* h_0\in \mathrm{Sym}(N)) \,h_0\cdot (f(x)(0)) \in C_1.
    \end{equation*}
Let $C_2$ be the set of all $(a,b)\in (2^N)^N\times (2^N)^N$ so that the fiber $C_{a,b}\subset ((2^N)^N)^{n\setminus 2}$ is comeager. We claim that, $\forall^\ast h_0\in \mathrm{Sym}(N)$, the pair
\begin{equation*}
    (h_0\cdot f(x)(0), (h_0,\mathrm{id})\cdot f(x)(1))    
\end{equation*}
 satisfies the conditions of Lemma~\ref{lemma: main technical sufficient condition for genericity}. For part (1), note that as $(C_2)_{h_0\cdot f(x)(0)}$ is comeager $\forall^\ast h_0\in \mathrm{Sym}(N)$, then the same conclusion holds for $\pi\circ h_0$, $\forall^\ast h_0\in\mathrm{Sym}(N)$, for any finite permutation $\pi$ of $N$. For part (2), the pair $(f(x)(0), f(x)(1))$ satisfies the conditions (a) and (b), for a generic choice of $\alpha$ and $\beta_0$, by Lemma~\ref{lemma: example of main construction - combination}, as $x(0)$ is injective and $x(1)$ is injective and separated. As before, since these conditions are invariant under the group action, they are also satisfied by the pair $(h_0\cdot f(x)(0), (h_0,\mathrm{id})\cdot f(x)(1))$. We then conclude from Lemma~\ref{lemma: main technical sufficient condition for genericity} that
\begin{equation*}
   (\forall^\ast h_0\in\mathrm{Sym}(N)) (\forall^*(h'_0,h_1)\in \mathrm{Sym}(N)^2)  (h'_0\cdot h_0\cdot f(x)(0), (h'_0\cdot h_0,h_1)\cdot f(x)(1)) \in C_2, 
\end{equation*}
and therefore
\begin{equation*}
    (\forall^*(h_0,h_1)\in \mathrm{Sym}(N)^2)  (h_0\cdot f(x)(0), (h_0,h_1)\cdot f(x)(1)) \in C_2,
\end{equation*}
concluding that $(\star_1)$ holds.

Finally, we conclude the proof of the proposition. In case $n<\omega$, we conclude at stage $n-1$ that $(\forall^\ast h\in \mathrm{Sym}(N)^n)\left[h\cdot f(x) \in C\right]$, as required. Assume now that $n=\omega$. Let $C_k \subset ((2^N)^N)^{k}$ be the set of all $a\in ((2^N)^N)^k$ for which the fiber $C_a\subset ((2^N)^N)^{\omega\setminus k}$ is comeager. We have that $\forall^\ast h\in\mathrm{Sym}(N)^\omega$, for any $k<\omega$, $h\cdot f(x)\restriction k\in C_k$. It follows that for any sequence of finite permutations $\pi\in\mathrm{Sym}(N)^k$, $\forall^\ast h\in\mathrm{Sym}(N)^\omega$, $(\pi\circ h)\cdot f(x)\restriction k\in C_k$.

Note that the action $a\colon \mathrm{Sym}(N){\color{blue}\curvearrowright} (2^N)^{\color{blue}N}$ is generically ergodic, that is, there is a comeager subset of $(2^N)^N$ in which every orbit is dense. Similarly, the action $\mathrm{Sym}(N)^\omega\curvearrowright ((2^N)^N)^\omega$ is generically ergodic, and for any $k<\omega$ the action $\mathrm{Sym}(N)^{\omega\setminus k}\curvearrowright ((2^N)^N)^{\omega\setminus k}$ is generically ergodic. Note that the orbit of some $b\in ((2^N)^N)^{n\setminus k}$ is dense if and only if for every $k<l<\omega$ the orbit of $b$ restricted to $((2^N)^N)^{[k,l)}$ is dense. By thinning out the comeager set $C$, we may assume that if $\xi\in ((2^N)^N)^\omega$ is such that $\xi\restriction k\in C_k$ for all $k<\omega$, then the orbit of $\xi\restriction[k,\omega)$ is dense for $\mathrm{Sym}(N)^{\omega\setminus k}\curvearrowright ((2^N)^N)^{\omega\setminus k}$, for every $k<\omega$.

\begin{claim}
Assume that $\xi\in ((2^N)^N)^\omega$ is such that $\pi\cdot(\xi\restriction k )\in C_k$, for any sequence of finite permutations $\pi\in \mathrm{Sym}(N)^k$, for all $k<\omega$.
Then
\begin{equation*}
    (\forall^\ast g\in \mathrm{Sym}(N)^\omega) g\cdot \xi \in C.
\end{equation*}
\end{claim}
\begin{proof}
It suffices to prove that for any dense open set $C \subset D \subset ((2^N)^N)^\omega$, the set $\set{g\in\mathrm{Sym}(N)^\omega}{g\cdot \xi \in D}$ is dense open in $\mathrm{Sym}(N)^\omega$. 
Fix a sequence of finite permutations $\pi \in \mathrm{Sym}(N)^k$. We need to find an extension of it, $g\in \mathrm{Sym}(N)^\omega$, so that $g\cdot \xi\in D$. Since $(\pi,\mathrm{id})\cdot(\xi\restriction k+1)\in C_{k+1}$, the fiber $D_{(\pi,\mathrm{id})\cdot(\xi\restriction k+1)}\subset ((2^N)^N)^{\omega\setminus {k+1}}$ is dense open. Since the orbit of $\xi\restriction [k+1,\omega)$ is dense, we may find some $h\in \mathrm{Sym}(N)^{\omega\setminus k+1}$ so that $h\cdot \xi\restriction[k+1,\omega) \in D_{(\pi,\mathrm{id})\cdot(\xi\restriction k+1)}$. Then $g=(\pi,\mathrm{id},h)$ is the desired extension of $\pi$ so that $g\cdot \xi \in D$.
\end{proof}
It follows now that 
\begin{equation*}
        (\forall^\ast h\in\mathrm{Sym}(N)^\omega)(\forall^\ast g\in \mathrm{Sym}(N)^\omega) g\cdot (h\cdot f(x))) \in C,
\end{equation*}
and so $(\forall^\ast h\in \mathrm{Sym}(N)^\omega)\left[h\cdot f(x) \in C\right]$, as required.
\end{proof}

We now fix sufficiently generic $\alpha$ and $\beta_0$, so that the conclusion of the proposition holds, and so that $f$ is a reduction from $F_n$ to $F^\ast_n$. For any $x\in (2^\N)^\N$, $(\forall^\ast g\in \mathrm{Sym}(N)^n)\left[g\cdot f\circ\psi(x) \in C\right]$. We may now construct the reduction using a large section uniformization theorem. By \cite[18.6]{Kechris-DST-1995} there is a Borel map $\rho\colon X_n \to \mathrm{Sym}(N)^n$ so that $\rho(x)\cdot f\circ\psi(x)\in C$ for any $x\in X_n$. Since $\psi$ and $f$ are reductions, and $F^\ast_n$ is invariant under the action, we conclude that
\begin{equation*}
x\mapsto \rho(x)\cdot f\circ\psi(x)    
\end{equation*}
is a Borel reduction of $F_n$ to $F^\ast_n\restriction C$.

\subsection{Different base sets for $F_n$}\label{section: appendix}

Recall the definition of the equivalence relation $F_n$ on the domain  $X_n\subset ((2^\N)^\N)^n$ in Section~\ref{Section: def of Fn into}. Given a sequence of countable infinite sets $\Vec{N}=\seqq{N_i}{i<n}$, define analogously an equivalence relation $F_n(\Vec{N})$ on a domain $X_n(\Vec{N})\subset\prod_{i<n} (2^{N_i})^{N_{i+1}}$. Fix bijections $\pi_i \colon \N \to N_i$. These lead naturally to a homeomorphism 
\begin{equation*}
    \prod_{i<n} (2^{N_i})^{N_{i+1}} \to \prod _{i<n} (2^\N)^\N = ((2^\N)^\N)^n.
\end{equation*}
Define $X_n(\Vec{N})$ to be the preimage of $X_n$, and $F_n(\Vec{N})$ to be the pullback of $F_n$. 

Equivalently, these objects can be defined directly as in Section~\ref{Section: def of Fn into}. For example, given $x\in \prod_{i<n} (2^{N_i})^{N_{i+1}}$, we may define $A^x_1 = \set{x(0)(t)}{t\in N_1}$, a countable subset of $2^{N_0}$, $A^x_2 = \set{a^{x,s}_1}{s\in N_2}$, where $    a^{x,s}_1 = \set{x(0)(t)}{x(1)(s)(t)=1} \subset A^x_1$, and define $A^x_3, \dots , A^x_n$ analogously. We then have that $x\mathrel{F_n(\Vec{N})}y$ if and only if $A^x_{k+1}= A^y_{k+1}$ for all $k < n$.

\subsection{Proof of Lemma~\ref{lemma: technical reduction to inj and sep}}


We will construct a Borel reduction $\psi\colon F_n\to F_n$ so that for any $x\in \mathrm{Image}(\psi)$, $x(i)\in (2^\N)^\N$ is injective for $i<n$ and separated for $0<i<n$.

First we describe the map in terms of the classifying invariants $A^x_n$ associated to $x\in X_n$. Note that if $A^x_1,\dots,A^x_n$ are all infinite, then there is some $x'\in X_n$ which is injective and is $F_n$-equivalent to $x$ (that is, $A^x_i = A^{x'}_i$ for $1\leq i \leq n$). For an injective $x\in X_n$, the condition of being separated corresponds to: for any $1\leq i < n$, for any $u\neq v \in A_i$ there is some $Z \in A_{i+1}$ so that $Z$ contains exactly one of $\{u,v\}$.
    
    Given some $A_1,\dots,A_n = A^x_1,\dots, A^x_n$ we construct new sets $B_1,\dots,B_n$, which will correspond to  $A^y_1,\dots,A^y_n$ for some $y\in X_n$ which will be defined as $y=\psi(x)$. We may assume that there are infinitely many reals $S = \{\star_1, \star_2, \dots\}$ which are not in $A^x_1$ for any $x$. 
    Define inductively 
    \begin{itemize}
        \item $B_1 = A_1 \cup S$;
        \item $B_{k+1} = A_{k+1} \cup \set{\{a,b\}}{a\in A_k,\, b\in B_k\setminus A_k}$.
    \end{itemize}
Note that $B_i\setminus A_i$ is infinite for $i=1,\dots,n$. For $1\leq i <n$, given $u,v\in B_k$, we find some set $Z\in B_{k+1}$ separating them. If $u,v$ are both in $A_k$, let $a\in B_k\setminus A_k$, then $\{u,a\}\in B_{k+1}$ separates $\{u,v\}$. If $u,v$ are both in $B_k\setminus A_k$, let $a\in A_k$, then $\{u,a\}\in B_{k+1}$ separates $\{u,v\}$. If $u\in A_k$ and $v\in B_k\setminus A_k$, let $a\in B_k\setminus A_k$ be different than $v$, then $\{u,a\}\in B_{k+1}$ separates $\{u,v\}$.

Finally, note that the sets $(A_1,\dots,A_n)$ are uniquely defined from $(B_1,\dots,B_n)$, where the members of $A_i$ are the members of $B_i$ which do not include any member of $S$ in their transitive closure. This fact corresponds to the map we are constructing being a reduction.

\begin{remark}\label{remark: psi joint separation}
    We will later use the following fact. Suppose $x,x'$ are such that $A^x_i = A^{x'}_i$ for $i=1,\dots,k$, $k<n$. Let $B'_i$ be the result of the construction applied to $x'$. Then $B_k = B'_k$, and for any distinct $u,v\in B_k$ there is $Z$ which is both in $B_{k+1}$ and $B'_{k+1}$ so that $Z$ separates $\{u,v\}$.
\end{remark}

It is left to find a Borel map sending $x$ to $y$ so that $A_i^y$ are as $B_i$ to $A_i^x$ as above. Given $x\in X_n$, we define \[\psi_0(x)=y=\seqq{y(k)}{k<n},\] so that $y(0)\in (2^\N)^{\N\sqcup \N}$, and $y(k)\in (2^{\N\sqcup \N^k})^{\N\sqcup \N^{k+1}}$ for $k\geq 1$, as follows. First we define $y(0)$.     For $n\in \N$ (in the left copy of $\N\sqcup \N$), $y(0)(n)=x(0)(n)$. For $n\in \N$ (in the right copy of $\N\sqcup \N$), $y(0)(n)=\star_n$. That is, $y(0)$ comprises of two sequences, one is $x$ and the other is the sequence of new reals $S$ as above. 
    
    Given $y(k)\in (2^{\N\sqcup \N^k})^{\N\sqcup \N^{k+1}}$, define $y(k+1)\in (2^{\N\sqcup \N^{k+1}})^{\N\sqcup \N^{k+2}}$:
    \begin{itemize}
        \item for $n\in \N$, for $m\in \N$,     $y(k+1)(n)(m) = x(n)(m)$, and for $t\in \N^{k+1}$, $y(k+1)(n)(t)=0$.
        \item for $(m,t)\in \N^{k+2} = \N\times \N^{k+1}$, $y(k+1)((m,t))(a)=1$ if and only if $a = m \in \N$ or $a=t\in \N^{k+1}$.
    \end{itemize}
Define $\Vec{N}=\seqq{N_i}{i<n}$ by $N_0 = \N\sqcup \N$, $N_k = \N\sqcup \N^{k+1}$. Then $\psi_0$ is a Borel reduction of $F_n$ to $F_n(\Vec{N})$ so that if $y=\psi_0(x)$ then the sets $A^y_1,A^y_2,\dots$ are constructed from $A^x_1,A^x_2,\dots$ as above. 

Let $\pi$ be the homeomorphism which is a reduction of $F_n(\Vec{N})$ to $F_n$. 
If $y=\pi\circ \psi_0(x)$ then the sets $A^y_1, A^y_2,\dots$ are infinite and separated. The only issue at this point is that $y(k)$ may be a non-injective enumeration of the set $A^y_k$. We invoke a cleanup function eliminating multiplicities, to end up with injective sequences. Recall from Section~\ref{subsection : warmup n=1 case} the Borel map $C\colon (2^\N)^\N\to (2^\N)^\N$ so that for $x\in (2^\N)^\N$,
\begin{itemize}
    \item if the set enumerated by $x$ is finite, $C(x)=x$,
    \item if the set enumerated by $x$ is infinite, $C(x)$ is an injective enumeration of the same set.
\end{itemize}
Another property of this map is that $C(x)$ does not depend on the reals appearing in the sequence $x$, but only on whether $x(n), x(m)$ are equal, for $n,m \in \N$.

Next, extend this to a map $\hat{C}_0\colon (2^\N)^{\color{blue}\N} \times 2^{\color{blue}\N} \to (2^\N)^{\color{blue}\N} \times 2^{\color{blue}\N}$ which, after applying $C$ to the first coordinate, corrects the second coordinate to carve out the same subset. That is, given $(x,\upsilon)\in (2^\N)^\N\times 2^\N$ and $\hat{C}_0(x,\upsilon) = (x',\upsilon')$, $\set{x(i)}{\upsilon(i)=1} = \set{x'(i)}{\upsilon'(i)=1}$. This in turn extends to a map
\begin{equation*}
    \hat{C}\colon (2^\N)^{\color{blue}\N} \times (2^{\color{blue}\N})^\N \to (2^\N)^{\color{blue}\N} \times (2^{\color{blue}\N})^\N,
\end{equation*}
defined so that if $\hat{C}(x,y) = (x',y')$, then $\hat{C}_0(x,y(i)) = (x',y'(i))$, for all $i\in\N$.
For $k+1<n$, define $C_k\colon ((2^\N)^\N)^n\to ((2^\N)^\N)^n$ by
\begin{equation*}
    C_k(x(0),\dots,x(k),x(k+1),x(k+2),\dots) = (x(0), \dots \hat{C}(x(k),x(k+1)), x(k+2),\dots).
\end{equation*}
If $n<\omega$ define $    C_{n-1}(x(0),\dots,x(n-1)) = (x(0),\dots, C(x_{n-1}))$.
Note that the maps $C_k$, $k<n$, commute with one another on the domain $X_n$. Finally, define
\begin{equation*}
C^\ast\colon ((2^\N)^\N)^n\to ((2^\N)^\N)^n\,\textrm{ by }\,  C^\ast = C_0\circ C_1\circ \dots.  
\end{equation*}
Note that $C^\ast$ is well defined also in the case $n=\omega$, as the $n$'th coordinate of $C^\ast(x)$ is fixed by $C_{n+1}\circ C_{n+2}\circ\dots$. The map
\begin{equation*}
    \psi = C^\ast \circ \pi \circ \psi_0 \colon X_n \to X_n
\end{equation*}
is now the desired reduction, concluding the proof of Lemma~\ref{lemma: technical reduction to inj and sep}. \hfill \qedsymbol

\section{Some ideas and obstacles towards the main theorem}\label{Section: obstacles second round}
We begin working towards a proof of Theorem~\ref{Theorem : Main}. First we make a slight reformulation. Then we briefly sketch the ideas for the case $n=1$, and emphasize some difficulties for extending these to $n\geq 2$.

\subsection{A reformulation}\label{subsection : reformulation}
We will prove Theorem~\ref{Theorem : Main} in the following equivalent formulation. For equivalence relations $F$ and $E$ on the same domain, say that $E$ \textbf{extends} $F$ if $F\subset E$. For $n<m\leq \omega$ we may view $F_n$ as an equivalence relation on $X_m$, defined by $x\mathrel{F_n}y\iff x\restriction n\mathrel{F_n} y\restriction n$, for $x,y\in X_m$. In this case $F_n$ extends $F_m$, for $n<m$. 
\begin{thm}\label{Theorem : Main reformulated}
Fix $1\leq n\leq \omega$. For any equivalence relation $E$, classifiable by countable structures, which extends $F_n$, either
    \begin{itemize}
        \item $F_n$ is Borel reducible to $E$, or
        \item For some $k<n$, $E$ extends $F_k$ on a comeager subset of $X_n$.
    \end{itemize}   
\end{thm}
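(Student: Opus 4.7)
The plan is to prove the contrapositive: assume $E$ is classifiable by countable structures, extends $F_n$, and for every $k<n$ does not extend $F_k$ on any comeager subset of $X_n$; and construct a Borel reduction $F_n \leq_B E$.

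The first step is to observe that the identity $\mathrm{id}_{X_n}$, viewed as a Borel homomorphism $F_n \to_B E$, does not factor through any $u^n_k$, $k<n$, on a comeager set. Indeed, a witnessing Borel homomorphism $h\colon F_k \to_B E$ with $h(u^n_k(x)) \mathrel{E} x$ on a comeager set would give, for any two $F_k$-equivalent points $x,y$ in that set, the chain $x \mathrel{E} h(u^n_k(x)) \mathrel{E} h(u^n_k(y)) \mathrel{E} y$, making $E$ extend $F_k$ on a comeager set, contrary to hypothesis. Hence Lemma~\ref{lemma: breaking down homomorphism} applies and yields a tower of equivalence relations $E_1, \ldots, E_n = E$ with Borel projections $\pi^{k+1}_k\colon E_{k+1} \to_B E_k$ and Borel homomorphisms $f_k \colon F_k \to_B E_k$ commuting with $u^{k+1}_k$ on comeager sets, such that each $f_k$ fails to factor through any $u^k_l$, $l<k$, on a comeager set.

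I would then proceed by induction on $n$ to show $F_n \leq_B E_n$. The base case $n=1$ is precisely the Kanovei--Sabok--Zapletal dichotomy (Theorem~\ref{thm;ksz-thm-6-24}) applied to $f_1\colon F_1\to_B E_1$: since $f_1$ does not factor through the trivial map $u^1_0$, the comeager-triviality alternative fails, forcing $F_1 \leq_B E_1$. For the inductive step, assume $r_{n-1}\colon F_{n-1} \leq_B E_{n-1}$. To build $r_n$, I would use the generic homomorphism $f$ of Definition~\ref{defn: the homomorphism f} with sufficiently generic parameters $\alpha, \beta_0$, precomposed with the injective-and-separated normalization $\psi$ of Lemma~\ref{lemma: technical reduction to inj and sep}. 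A Kuratowski--Ulam and Vaught-transform argument as in Section~\ref{section: complexity on comeager sets} then shows that $(\forall^\ast g \in \mathrm{Sym}(N)^n)\, g\cdot f(\psi(x))$ lies in any prescribed comeager set $C \subset X_n$; by choosing $C$ to be a comeager domain on which $f_n$ is defined, the tower diagram commutes, and generic translates separate distinct $F_n$-classes inside a common $F_{n-1}$-class, a Borel selector \cite[Theorem~18.6]{Kechris-DST-1995} yields the sought $r_n$. For the limit case $n = \omega$, the generic-ergodicity argument at the end of Theorem~\ref{thm: warmup proof retains complexity on comeager sets} handles the compatibility across all finite stages simultaneously.

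The main obstacle is certifying that $r_n$ is a genuine reduction rather than merely a homomorphism. When $u^n_{n-1}(x) \mathrel{F_{n-1}} u^n_{n-1}(y)$ but $x$ is not $F_n$-equivalent to $y$, the inductive reduction $r_{n-1}$ cannot separate $r_n(x)$ from $r_n(y)$ at the $E_{n-1}$-level, and the separation must come from the non-factoring of $f_n$. Translating this qualitative non-factoring into a dense-$G_\delta$ set of Baire-category witnesses that separate such pairs after a generic group action requires the full force of the technical Lemmas~\ref{lemma: main technical sufficient condition for genericity} and~\ref{lemma: main technical sufficient condition for genericity n=1 case}, applied to the action $a_n$ of Section~\ref{subsection : orbit ER presentation} and the construction $\gamma,\gamma_0$ of Section~\ref{section: the main construction}. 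This is the heart of the argument: the preceding machinery has been engineered precisely so that non-factoring, together with generic choices of $\alpha,\beta_0$ and a generic group element, upgrades the Borel homomorphism $\seqq{f_k}{k\leq n}$ to a Borel reduction of $F_n$ into $E$.
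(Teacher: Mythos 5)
Your proposal follows the same route as the paper: pass to the tower decomposition (Lemma~\ref{lemma: breaking down homomorphism}, equivalently Lemma~\ref{lemma: main proof breakdown of E}), take the generic map $f\circ\psi$ with sufficiently generic parameters $\alpha,\beta_0$, and verify it is a reduction by exploiting the genericity machinery of Sections~\ref{Section: permutations}--\ref{section: the main construction}. The opening observation that the identity homomorphism does not factor through $u^n_k$, and hence Lemma~\ref{lemma: breaking down homomorphism} applies, is correct and is exactly the translation between the two formulations given in Section~\ref{subsection : reformulation}.

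Where the proposal deviates is the inductive framing, and that framing conceals a gap at the base case. The paper does not induct on $n$: it fixes the single map $r = f\circ\psi$ and, for $x\not\mathrel{F_n}y$ in the image of $\psi$, takes the least $k$ with $x\restriction k\not\mathrel{F_k}y\restriction k$, applies Lemma~\ref{lemma: main proof last coordinate reduction} at that level (using $f_k(x\restriction k)=f_n(x)\restriction k$), and propagates the resulting $E_k$-inequivalence up to $E$ via $E\subseteq E_k$ on a comeager set. This handles $n=\omega$ uniformly, since any $F_\omega$-inequivalent pair already disagrees at some finite level. If you instead argue by induction with the hypothesis ``$r_{n-1}\colon F_{n-1}\leq_B E_{n-1}$ is a reduction,'' the hypothesis only helps if $r_{n-1}$ is literally $(f\circ\psi)$ restricted to the first $n-1$ coordinates, because that is what $\pi^n_{n-1}\circ r_n$ is. But your base case produces $r_1$ from the KSZ dichotomy as a black box, which supplies \emph{some} reduction of $F_1$ to $E_1$, not the assertion that $\gamma_0\circ\psi$ itself is a reduction. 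Those are different statements, and the inductive step does not go through with an arbitrary $r_1$. To close the gap you need what the paper proves: that the concrete map $f_1\circ\psi$ is a reduction of $F_1$ to $E_1$, which is exactly Lemma~\ref{lemma: main proof last coordinate reduction} with $k=0$, and which is precisely why the paper builds and uses the separate $\gamma_0$-construction of Section~\ref{section: variation of n=1 case construction} rather than citing \cite[Theorem~6.24]{ksz}.

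A smaller point: the final Borel selector is unnecessary. The $\mathrm{Sym}(N)^n$-orbit relation refines $F^\ast_n$, which $E$ extends, so the $E$-class of $g\cdot(f\circ\psi(x))$ does not depend on $g$; the paper therefore asserts directly that $f\circ\psi$ is the reduction, with the generic group elements appearing only as witnesses to inequivalence inside Lemma~\ref{lemma: main proof last coordinate reduction}, not as a correction applied to the map.
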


Given a Borel homomorphism $f\colon F\to_B E$, define $E^\ast$, on the same domain as $F$, as the pullback of $E$: $    x\mathrel{E^\ast}y\iff f(x)\mathrel{E}f(y)$. Then (1) $E^\ast$ extends $F$, and (2) $E^\ast$ is Borel reducible to $E$ (witnessed by $f$). The definition above, of $F_n$ as an equivalence relation on $X_m$, is the pullback of $F_n$ by the homomorphism $u^m_n$.

\begin{proof}[Proof of Theorem~\ref{Theorem : Main} from Theorem~\ref{Theorem : Main reformulated}]
Given an analytic equivalence relation $E$ and a Borel homomorphism $f\colon F_n\to_B E$, as in Theorem~\ref{Theorem : Main}, apply Theorem~\ref{Theorem : Main reformulated} to $E^\ast$. In the first case, we conclude that $F_n$ is Borel reducible to $E^\ast$, and is therefore Borel reducible to $E$ as well. Otherwise, there is $k<n$ and a comeager $C\subset X_n$ on which $E^\ast$ extends $F_k$.
We may find a Borel partial function $g^\ast\colon X_k\to X_n$, defined on a comeager subset of $X_k$, so that $g^\ast(x)$ extends $x$, and $g^\ast(x)\in C$, for any $x$ in the domain of $g^\ast$. 
In particular, $g^\ast$ is a homomorphism from $F_k$ to $E^\ast$. Define $g=f\circ g^\ast$. Then $g\colon F_k\to_B E$ is a Borel homomorphism.

Finally, we see that $f$ factors through $u^n_k$, via $g$, on the comeager set $C$. Given $x\in C$, since $E^\ast$ extends $F_k$ on $C$, $x$ and $g^\ast\circ u^n_k(x)$ are $E^\ast$-related, and so $f(x)$ and $f(g^\ast\circ u^n_k(x))=g\circ u^n_k(x)$ are $E$-related. 
\end{proof}

\subsection{The case $n=1$}\label{subsection : warmup dichotomy n=1 case}
Fix an analytic equivalence relation $E$ on $(2^\N)^\N$ so that $F_1\subset E$. Recall that $F_0$ is a trivial equivalence relation, with just one class, so the second clause in Theorem~\ref{Theorem : Main reformulated}, stating that $E$ extends $F_0$ on a comeager set, states that $E$ has a comeager equivalence class.
Assume that $E$ does not have a comeager class. \cite[Theorem~6.24]{ksz} then proves that $F_1$ is Borel reducible to $E$ as follows. Recall the homomorphism $f\colon F_1 \to_B F_1$ defined in Section~\ref{subsection : warmup n=1 case}. We claim that it reduces $F_1$ to $E$. Since $E$ extends $F_1$, it remains to show that $x\not\mathrel{F_1}y \implies f(x)\not\mathrel{E}f(y)$.
Note that there are three different ways for $x,y\in(2^\N)^\N$ to be not $F_1$-related:
\begin{enumerate}[a.]
    \item $x$ and $y$ enumerate disjoint sets;
    \item one of the two sets is contained in the other;
    \item non of the above.
\end{enumerate}
For example, if $x\not\mathrel{F_1}y$ as in case a. above, one can show that then $(f(x),f(y))\in (E^c)^\ast \subset E^c$, and so $f(x)\not\mathrel{E}f(y)$, as required. 

\begin{Notation}\label{notation: union warmup}
For $a,b\in (2^\N)^\N$, write $a\cup b$ for some member of $(2^\N)^\N$ enumeration the union of the sets enumerated by $a$ and $b$. We will ask questions about whether such sequence is equivalent to another, according to $F_1$ or $E$. As both extend $F_1$, the answer does not depend on the enumeration of $a\cup b$. We will similarly use the notations $a\setminus b$ and $a\cap b$, whenever these are not empty, for some member of $(2^\N)^\N$ enumerating the corresponding sets.

Whenever $a\cup b$, $a\cap b$, or $a\setminus b$ are infinite, {\bf we always take the notation to be an injective enumeration} of the corresponding set. One important aspect of the definition of $f$ in Section~\ref{subsection : warmup n=1 case} is that for any $a,b$ in the image of $f$, both are infinite, and the sets $a\cap b$ and $a\setminus b$ are either empty or infinite.
\end{Notation}
Assume now that $x\not\mathrel{F_1} y$ according to case c. Following the definition of $f$, we may write $f(x)$ and $f(y)$ as ${a}\cup {c}$ and ${b}\cup {c}$, where $a=f(x\setminus y)$, $b=f(y\setminus x)$, and $c=f(x\cap y)$.

It follows from $E$ being meager that $\forall^\ast (a,b,c)\in (2^\N)^\N\times (2^\N)^\N\times (2^\N)^\N \left[ a\cup c \not\mathrel{E} b\cup c \right]$. Let $C\subset ((2^\N)^\N)^3$ be the corresponding comeager set. As before, for a ``sufficiently generic'' choice of the function $g\colon 2^\N\to (2^\N)^\N$ in Section~\ref{subsection : warmup n=1 case} it can be verified that for disjoint non-empty $z_1,z_2,z_3$, for almost all $(g_1,g_2,g_3)\in (S_\infty)^3$, $(g_1\cdot f(z_1), g_2\cdot f(z_2), g_3\cdot f(z_3))\in C$. Applying this to the disjoint non-empty sets $x\setminus y, y\setminus x, x\cap y$, it then follows that $f(x)\not\mathrel{E}f(y)$, as required. 

\subsection{The case $n\geq 2$}
Our efforts so far were to find a Borel homomorphism $f\colon F_n \to_B F_n$, landing in comeager sets (after a Vaught transform). Using this homomorphism we will be able to extend the ideas in Section~\ref{subsection : warmup dichotomy n=1 case} to prove the following: if $x\not\mathrel{F_n}y$ differ \emph{only at the last coordinate} (so their restrictions to $X_{n-1}$ are $F_{n-1}$-equivalent), then $f(x)\not\mathrel{E}f(y)$. See Lemma~\ref{lemma: main proof last coordinate reduction}.

A new difficulty arising in the $n\geq 2$ case is dealing with $x\not\mathrel{F_n}y$ which are already $F_k$-inequivalent for some $k<n$. The issue with trying to extend these arguments directly is a proliferation of cases to consider.
For example, suppose $x(0)$ and $y(0)$ are already $F_1$-inequivalent. There are three cases a., b., and c. as above. Consider case c., so we have that $x(0) \cap y(0), x(0)\setminus y(0), y(0)\setminus x(0)$ are not empty. We would now need to view each set in $x(1)$, which is considered a subset of $x(0)$, as a union of two sets, a subset of $x(0)\cap y(0)$ and a subset of $x(0)\setminus y(0)$. Given two different members of $x(1)$, we would need to worry about whether they may agree on either their restrictions to $x(0)\cap y(0)$ or $x(0)\setminus y(0)$. Similarly, we would have to keep track on which members of $x(1)$ and $y(1)$ agree on the intersection $x(0)\cap y(0)$. While this may be handled for $n=2$, when $n>>2$ there are ever more cases to consider and divisions to keep track of.

The solution will be to ``decompose'' the equivalence relation $E$ (which extends $F_n$) to a sequence of equivalence relations $E_k$ so that $E_k$ extends both $F_k$ and $E$. While this is not generally possible, it is possible generically. This is the content of Lemma~\ref{lemma: main proof breakdown of E}.


The proof of Lemma~\ref{lemma: main proof last coordinate reduction} relies on the Baire-categoric techniques developed above, and will be a natural extension of the arguments in Section~\ref{section: complexity on comeager sets}. The proof of Lemma~\ref{lemma: main proof breakdown of E} will involve higher set theoretic techniques as well.

\section{Proof of the main theorem}\label{Section: proof of main thm}

We now prove the main result, Theorem~\ref{Theorem : Main}, in its equivalent formulation, Theorem~\ref{Theorem : Main reformulated}. As above, it will be convenient to work with $F_n^\ast$ instead of $F_n$ (see Section~\ref{section: change of countable set}). We prove the following, which is equivalent to Theorem~\ref{Theorem : Main reformulated}.
\begin{thm}
Fix $1\leq n\leq \omega$. For any $E$ which is classifiable by countable structures and which extends $F^\ast_n$, either
    \begin{itemize}
        \item $F_n$ is Borel reducible to $E$, or
        \item For some $k<n$, $E$ extends $F^\ast_k$ on a comeager subset of $X^\ast_n$.
    \end{itemize}   
\end{thm}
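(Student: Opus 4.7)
The plan is to proceed by induction on $n\leq\omega$. The base case $n=1$ reduces to Theorem~\ref{thm;ksz-thm-6-24}: since $F_0^\ast$ is the trivial equivalence relation on a singleton, ``$E$ extends $F_0^\ast$ on a comeager set'' is exactly ``$E$ has a comeager class,'' so the two alternatives of Theorem~\ref{thm;ksz-thm-6-24} match the two alternatives above. For the induction step, fix $E$ classifiable by countable structures extending $F_n^\ast$, and assume the second alternative fails at every $k<n$. I will produce a Borel reduction $F_n\leq_B E$ built from the main construction $R=f\circ\psi$, where $\psi$ is the cleanup of Lemma~\ref{lemma: technical reduction to inj and sep} and $f=\gamma_0\times\gamma^{n\setminus\{0\}}$ is the homomorphism of Section~\ref{subsec: defn of homomorphism f}, composed with a large-section uniformization.

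Composing any Borel reduction witnessing $F_n\sim_B F_n^\ast$ with the identity on $X_n^\ast$ produces a Borel homomorphism $\iota\colon F_n\to_B E$, and the assumption that $E$ does not extend $F_k^\ast$ on a comeager set for any $k<n$ is precisely the non-factoring hypothesis of Lemma~\ref{lemma: breaking down homomorphism}. Applying this lemma to $\iota$ yields equivalence relations $E_k$ ($k<n$), Borel homomorphisms $\pi^n_k\colon E\to_B E_k$, $\pi^{k+1}_k\colon E_{k+1}\to_B E_k$, and $f_k\colon F_k\to_B E_k$, with each $f_k$ failing to factor through $u^k_l$ for $l<k$ and the associated diagram commuting on a comeager set. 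Pulling $E_k$ back along $f_k$ produces an equivalence relation on $X_k^\ast$ extending $F_k^\ast$ that satisfies the inductive hypotheses at level $k$, so by the induction $F_k\leq_B E_k$ for each $1\leq k<n$.

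Next I show that for sufficiently generic $\alpha,\beta_0$, the map $R$, after post-composition with a large-section uniformization into the Vaught transform of a suitable comeager set, is a reduction $F_n\to E$. The homomorphism direction is automatic since $R$ already reduces $F_n$ to $F_n^\ast$ and $E\supset F_n^\ast$. For the converse, suppose $x\not\mathrel{F_n}y$ and let $k$ be maximal with $x\restriction k\mathrel{F_k}y\restriction k$, so $k<n$. If $k=n-1$, so $x$ and $y$ differ only at the top coordinate, I invoke the forthcoming Lemma~\ref{lemma: main proof last coordinate reduction}: this is the extension of Theorem~\ref{thm: warmup proof retains complexity on comeager sets} in which the background equivalence relation is replaced by $E$, proved by applying Lemmas~\ref{lemma: main technical sufficient condition for genericity} and~\ref{lemma: main technical sufficient condition for genericity n=1 case} to a comeager set separating $E$-inequivalent ``last-coordinate-different'' pairs, with coherent handling of the disjoint top-level fragments analogous to $x\cap y$, $x\setminus y$, $y\setminus x$ of Section~\ref{subsection : warmup dichotomy n=1 case}. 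If $k<n-1$, then $u^n_{k+1}(x)\not\mathrel{F_{k+1}}u^n_{k+1}(y)$; assuming $R(x)\mathrel{E}R(y)$ and applying $\pi^n_{k+1}$, the commutativity of the diagram on a comeager set together with the coordinate-wise nature of $R$ (giving $u^n_{k+1}\circ R=R^{(k+1)}\circ u^n_{k+1}$) contradicts the induction hypothesis at level $k+1$. For $n=\omega$, any $F_\omega$-inequivalent pair disagrees at a finite level, reducing to the finite case.

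The main technical obstacle is Lemma~\ref{lemma: main proof last coordinate reduction}, which must execute the genericity argument of Section~\ref{section: complexity on comeager sets} relative to an arbitrary $E$ rather than $F_n^\ast$ itself, and must do so simultaneously for the several disjoint top-level pieces; the interplay between the $\gamma_0$-action at the bottom coordinate and the $\gamma$-actions at higher coordinates, already delicate in the warm-up, must now respect the finer comeager structure coming from $E$. The other nontrivial ingredient is Lemma~\ref{lemma: breaking down homomorphism}, which is the only place where classifiability of $E$ by countable structures is used, and where (as foreshadowed in Section~\ref{section: introduction}) symmetric-model techniques are required to extract the tower $E_k$ of sub-equivalence relations in a coherent way.
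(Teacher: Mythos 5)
Your overall strategy is recognizable: you identify the two key lemmas (the decomposition of $E$ into a tower $E_k$, and the last-coordinate inequivalence lemma) and the reduction candidate $f\circ\psi$. But the inductive framing introduces a real gap at the step you handle when $x,y$ first disagree at some level $k+1 < n$. You say this ``contradicts the induction hypothesis at level $k+1$,'' but the induction hypothesis only yields the existence of \emph{some} Borel reduction $F_{k+1}\leq_B E_{k+1}$. What your argument actually requires is that the \emph{specific} truncation $R^{(k+1)} = f_{k+1}\circ\psi_{k+1}$, for the very same $\alpha,\beta_0$ used at level $n$, is a reduction to $E_{k+1}$; otherwise, from $R^{(k+1)}(u^n_{k+1}(x))\mathrel{E_{k+1}}R^{(k+1)}(u^n_{k+1}(y))$ you cannot infer $u^n_{k+1}(x)\mathrel{F_{k+1}}u^n_{k+1}(y)$. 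To close this you would have to strengthen the induction hypothesis to a statement about the specific map $f_k\circ\psi_k$, together with a genericity requirement on $(\alpha,\beta_0)$ that is uniform across all levels $k\leq n$ (plausible, since there are only countably many levels, but not what you have written). Relatedly, applying the induction at level $k$ needs $E_k$ (or its pullback by $f_k$) to be classifiable by countable structures; Lemma~\ref{lemma: breaking down homomorphism} does not assert this, and you do not verify it.

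The paper avoids the induction entirely. After producing the tower $E_k$ from Lemma~\ref{lemma: main proof breakdown of E} (equivalently Lemma~\ref{lemma: breaking down homomorphism}), it applies Lemma~\ref{lemma: main proof last coordinate reduction} \emph{directly at the level $m$ where $x$ and $y$ first disagree}: since $E_m$ extends $F^\ast_m$ but not $F^\ast_{m-1}$ on a comeager set, and $f(x)\restriction m = f_m(x\restriction m)$, the lemma gives $f(x)\restriction m\not\mathrel{E_m}f(y)\restriction m$, and then the containment $E\subset E_m$ on a comeager set yields $f(x)\not\mathrel{E}f(y)$. This flattened version makes no use of classifiability of the intermediate $E_k$'s (indeed the paper records afterwards a strictly more general version with arbitrary analytic $E_k$'s) and needs no inductive bookkeeping. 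Your base case via Theorem~\ref{thm;ksz-thm-6-24} is correct but unnecessary under the paper's route, which handles $n=1$ uniformly by applying the last-coordinate lemma with $k=0$. Finally, a small inaccuracy: the large-section uniformization into a Vaught transform that you invoke belongs to the proof of Theorem~\ref{thm: warmup proof retains complexity on comeager sets}, where the reduction must land inside a prescribed comeager set; for the main theorem the reduction is just $f\circ\psi$ itself, and one needs only the genericity of $\alpha,\beta_0$, not a post-composed uniformization.
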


Towards that end, fix $1\leq n \leq \omega$, and $E$ as above which extends $F^\ast_n$. Assume that $E$ does not extend $F^\ast_k$ on a comeager set, for any $k<n$. We must prove that $F_n$ is Borel reducible to $E$.
Recall the definition of $f\colon F_n\to_B F_n$, Definition~\ref{defn: the homomorphism f} in Section~\ref{section: the main construction}, and the definition of $\psi\colon F_n\to F_n$ from Lemma~\ref{lemma: technical reduction to inj and sep}. Since $E$ extends $F^\ast_n$, $f\circ\psi\colon F_n\to_B E$ is a homomorphism.
\begin{claim}
    There are maps $\alpha$ and $\beta_0$ so that $f\circ\psi$ is a reduction of $F_n$ to $E$.
\end{claim}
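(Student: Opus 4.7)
The plan is to combine the Baire-category machinery of Sections~\ref{Section: permutations}--\ref{section: complexity on comeager sets} with a structural decomposition of $E$. Viewing the inclusion ${F_n^\ast}\subseteq E$ as a Borel homomorphism, the assumption that $E$ does not extend $F_k^\ast$ on a comeager set for any $k<n$ is precisely the statement that this homomorphism does not factor through $u^n_k$ on a comeager set. Applying Lemma~\ref{lemma: breaking down homomorphism} produces equivalence relations $E_k$, together with Borel homomorphisms $\pi^n_k\colon E\to_B E_k$, $\pi^{k+1}_k\colon E_{k+1}\to_B E_k$, and $f_k\colon F_k^\ast\to_B E_k$, fitting into a diagram that commutes on a comeager set $C\subset X_n$, with each $f_k$ failing to factor through any lower $u^k_l$.

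The main technical ingredient I would then establish is a \emph{last-coordinate separation} lemma: for generically chosen $\alpha$ and $\beta_0$, for every $1\leq k\leq n$ and every pair $x,y\in X_k$ with $u^k_{k-1}(x)\mathrel{F_{k-1}}u^k_{k-1}(y)$ but $x\not\mathrel{F_k}y$, one has $f_k(f(x))\not\mathrel{E_k}f_k(f(y))$. This is the promised higher-level analogue of Theorem~\ref{thm: warmup proof retains complexity on comeager sets}. The proof would follow the template of Section~\ref{subsection : warmup dichotomy n=1 case}: since $x$ and $y$ diverge only at the final coordinate, one applies the union/intersection/difference trichotomy at that coordinate, rewrites the images via $\gamma$ in terms of the three disjoint pieces, and uses Lemmas~\ref{lemma: examples of main construction} and~\ref{lemma: main technical sufficient condition for genericity} to push the pair generically into a comeager subset of the complement $(E_k)^c$; such a set is non-meager precisely because $f_k$ does not factor through $u^k_{k-1}$ on a comeager set.

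To finish, fix such generic $\alpha,\beta_0$, and let $x,y\in X_n$ with $x\not\mathrel{F_n}y$. Let $k$ be the least level at which $u^n_k(x)\not\mathrel{F_k}u^n_k(y)$. Since $\psi$ preserves $F_l$-equivalence for each $l\leq n$ (cf.\ Remark~\ref{remark: psi joint separation}), the pair $u^n_k(\psi(x)), u^n_k(\psi(y))$ is $F_{k-1}$-equivalent but $F_k$-inequivalent, and the separation lemma yields $f_k(f(u^n_k(\psi(x))))\not\mathrel{E_k}f_k(f(u^n_k(\psi(y))))$. After a further Vaught-style correction of $\psi$ ensuring that its image meets the comeager set on which the diagram commutes, this transfers to $\pi^n_k(f(\psi(x)))\not\mathrel{E_k}\pi^n_k(f(\psi(y)))$, and hence $f(\psi(x))\not\mathrel{E}f(\psi(y))$. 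Since $f\circ\psi$ was already a homomorphism (because $E$ extends $F_n^\ast$), it is now seen to be a Borel reduction.

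The hardest part will be the last-coordinate separation lemma in the regime $k\geq 2$: at level $k$ two commuting copies of $S_\infty$ act via $c=(b,a)$ from Section~\ref{subsection : orbit ER presentation}, and one must secure simultaneous genericity in both coordinates of $\gamma$ so that the image of the diverging triple lands in the desired comeager witness. This is precisely the setting Lemma~\ref{lemma: main technical sufficient condition for genericity} was designed for, but orchestrating it uniformly in $k$, and uniformly over all Borel homomorphisms $f_k$ coming from the breakdown, is the main obstacle. A secondary subtlety is the limit case $n=\omega$, where the countable family of genericity conditions imposed on $\alpha$ and $\beta_0$ at each finite level must be intersected to a single viable choice.
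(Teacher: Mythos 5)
Your proposal follows essentially the same two-lemma route as the paper: a decomposition of $E$ into a tower of intermediate relations $E_k$ (the paper's Lemma~\ref{lemma: main proof breakdown of E}, equivalently Lemma~\ref{lemma: breaking down homomorphism} which you cite), plus a last-coordinate separation lemma (the paper's Lemma~\ref{lemma: main proof last coordinate reduction}) proved via the union/intersection/difference trichotomy and the genericity machinery of Lemmas~\ref{lemma: main technical sufficient condition for genericity} and~\ref{lemma: examples of main construction}, followed by the observation that $f_k(x\restriction k)=f_n(x)\restriction k$ lets one propagate $E_k$-inequivalence at the least level of disagreement up to $E$-inequivalence. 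The notational conflation between the map $f_k=\gamma_0\times\gamma^{k\setminus\{0\}}$ and the breakdown homomorphism $f_k\colon F_k^\ast\to_B E_k$ is harmless once the intent is clear, and your remarks about the Vaught-style correction and the uniformity issues for $n=\omega$ match the care the paper takes.
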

Towards that end, fix $x,y\in X_n$ in the image of $\psi$ so that $x\not\mathrel{F_n}y$. We need to prove that ${f(x)}\not\mathrel{E}{f(y)}$. We will split into countably many cases. In each case we will show that $f(x)\not\mathrel{E}f(y)$ for generically chosen $\alpha$ and $\beta_0$. This will conclude the proof of the claim.

\begin{lemma}\label{lemma: main proof last coordinate reduction}
    Fix $0\leq k<\omega$. Let $E_{k+1}$ be an analytic equivalence relation, defined on a comeager subset of $X^\ast_{k+1}$, extending $F^\ast_{k+1}$ on this domain. Assume that $E_{k+1}$ does not extend $F^\ast_k$ on any comeager set. Let $x,y\in X_{k+1}$ be in the image of $\psi$. Assume that ${x}\not\mathrel{F_{k+1}}y$ yet ${(x\restriction k)}\mathrel{F_k}{(y\restriction k)}$. Then ${f_{k+1}(x)}\not\mathrel{E_{k+1}}{f_{k+1}(y)}$, where $f_k\colon F_k\to_B F_k$ is the homomorphism $\gamma_0\times \gamma^{k\setminus\{0\}}$ as in Definition~\ref{defn: the homomorphism f}.
\end{lemma}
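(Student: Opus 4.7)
Write $a=f_{k+1}(x)$ and $b=f_{k+1}(y)$. For generic $\alpha,\beta_0$ the map $f_{k+1}$ is a reduction $F_{k+1}\to F^\ast_{k+1}$ (Remark~\ref{remark: reduction F_n to F_n generically}), and its first $k$ components form a reduction $F_k\to F^\ast_k$ (Remarks~\ref{remark: gamma},~\ref{remark: gamma0}); the hypotheses therefore give $a\restriction k\mathrel{F^\ast_k}b\restriction k$ and $a\not\mathrel{F^\ast_{k+1}}b$, while the goal is $a\not\mathrel{E_{k+1}}b$. The strategy has three phases.

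\emph{Phase 1 (align the first $k$ coordinates).} On the image of $\psi$ the sequences are injective and separated, so $F^\ast_k$ agrees with the orbit equivalence relation of the action $a_k$. Choose $(g_0,\dots,g_{k-1})\in\mathrm{Sym}(N)^k$ sending $a\restriction k$ to $b\restriction k$, extend arbitrarily by some $g_k$, and replace $a$ by $(g_0,\dots,g_k)\cdot_{a_{k+1}}a$. Since $E_{k+1}\supset F^\ast_{k+1}$ is invariant under the orbit action $a_{k+1}$, the $E_{k+1}$-class of $a$ is preserved, while now $a\restriction k=b\restriction k$. Work henceforth in the closed fiber product
\[\Delta_k := \set{(a',b')\in X^\ast_{k+1}\times X^\ast_{k+1}}{a'\restriction k=b'\restriction k}.\]

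\emph{Phase 2 (separation lemma).} The key new claim is that $S := \set{(a',b')\in\Delta_k}{a'\not\mathrel{E_{k+1}}b'}$ is comeager in $\Delta_k$. If not, $\Delta_k\cap E_{k+1}$ is non-meager in $\Delta_k$; by analyticity and Kuratowski--Ulam over the projection $\Delta_k\to X^\ast_k$ there is a non-meager $Z\subset X^\ast_k$ such that for $z\in Z$ the fiber relation $E^z:=\set{(\xi,\eta)}{(z,\xi)\mathrel{E_{k+1}}(z,\eta)}$ is non-meager in $F_z\times F_z$, with $F_z$ the $u^{k+1}_k$-fiber over $z$. Because $E^z$ is analytic and invariant under the generically ergodic $\mathrm{Sym}(N)$-action coming from the last component of $a_{k+1}$, a $0$--$1$ law delivers a comeager $E^z$-class $C_z\subset F_z$. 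The $a_{k+1}$-invariance of $E_{k+1}$ yields a canonical correspondence $C_z\leftrightarrow C_{z'}$ whenever $z\mathrel{F^\ast_k}z'$; Vaught-transforming and uniformizing the assignment $z\mapsto C_z$ then produces a comeager Borel $B\subset X^\ast_{k+1}$ on which $E_{k+1}$ extends $F^\ast_k$, contradicting the hypothesis.

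\emph{Phase 3 (genericity of the pair and conclusion).} A two-sided version of the argument proving Theorem~\ref{thm: warmup proof retains complexity on comeager sets}, carrying $(a,b)$ simultaneously and applying Lemma~\ref{lemma: main technical sufficient condition for genericity} and Lemma~\ref{lemma: main technical sufficient condition for genericity n=1 case} on both coordinates with \emph{shared} perturbations on the first $k$ components, shows that for generic $\alpha,\beta_0$, for $\forall^\ast(h_0,\dots,h_{k-1},h_k,h_k')\in\mathrm{Sym}(N)^{k+2}$, setting $h=(h_0,\dots,h_k)$ and $h'=(h_0,\dots,h_{k-1},h_k')$, the pair $(h\cdot_{a_{k+1}}a,\,h'\cdot_{a_{k+1}}b)$ lies in $\Delta_k\cap S$. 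Picking one such tuple yields $h\cdot a\not\mathrel{E_{k+1}}h'\cdot b$, and $F^\ast_{k+1}$-invariance of $E_{k+1}$ on both sides pushes this back to $a\not\mathrel{E_{k+1}}b$. The main obstacle I expect is Phase~2: passing from ``$E^z$ non-meager'' to ``$E^z$ has a comeager class'', and then Borel-patching these distinguished classes coherently across $Z$ to contradict the global non-extension hypothesis. This is the step where the classifiability-by-countable-structures assumption on the ambient $E$ is expected to enter, via the tower supplied by Lemma~\ref{lemma: breaking down homomorphism}, which equips each $E_k$ with the appropriate orbit/invariance structure that makes the $0$--$1$ law and the patching available.
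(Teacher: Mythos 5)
Your Phases 1 and 2 are essentially sound and track the paper's first move: reduce to $x\restriction k = y\restriction k$ (the paper does this on the $F_{k+1}$ side using that $f_{k+1}$ is a homomorphism, you do it on the $F^\ast_{k+1}$ side using the orbit action---either is fine), and then argue via Kuratowski--Ulam and the topological $0$--$1$ law that, if the fiberwise inequivalence were not comeager, $E_{k+1}$ would extend $F^\ast_k$ on a comeager set. Incidentally, your parenthetical at the end is off: classifiability by countable structures is \emph{not} needed here; the paper explicitly notes that assumption enters only in Lemma~\ref{lemma: breaking down homomorphism} (the decomposition of $E$), while Lemma~\ref{lemma: main proof last coordinate reduction} works for any analytic $E_{k+1}$.

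The genuine gap is in Phase~3, and it is exactly what the paper's case analysis (a/b/c) is designed to deal with. You claim that for generic $(h_0,\dots,h_{k-1},h_k,h_k')$, the pair $\bigl((h_0,\dots,h_k)\cdot a,\,(h_0,\dots,h_{k-1},h_k')\cdot b\bigr)$ lands in a comeager subset of $\Delta_k$. This is only plausible when $x(k)$ and $y(k)$ enumerate \emph{disjoint} sets (case~a). If they overlap (cases~b, c), then $A^{f(x)}_{k+1}$ and $A^{f(y)}_{k+1}$ share elements, and this is an $F^\ast_{k+1}$-invariant of the \emph{pair}: no choice of $(h,h')\in\mathrm{Sym}(N)^{k+1}\times\mathrm{Sym}(N)^{k+1}$ destroys the overlap. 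By contrast, a generic pair $(a',b')\in\Delta_k$ has $A^{a'}_{k+1}\cap A^{b'}_{k+1}=\emptyset$, so the orbit of $(a,b)$ under the shared-prefix action is confined to a meager subset of $\Delta_k$. Thus $(h\cdot a,\,h'\cdot b)$ can never land in a pre-assigned comeager subset of $\Delta_k$, and the transfer from the separation lemma to the specific pair $(f_{k+1}(x),f_{k+1}(y))$ breaks down. To repair this you must do what the paper does in case~c: split the index set $N$ into $S_1$ (tuples landing in $x(k)\cap y(k)$) and $S_2=N\setminus S_1$, pass via the homeomorphism $\iota$ to the product $((2^N)^N)^k\times(2^N)^{S_1}\times(2^N)^{S_2}\times(2^N)^{S_2}$ that carries the pair as $\iota(a,\xi_0,\xi_1)$, $\iota(a,\xi_0,\xi_2)$, and prove a separate comeager-inequivalence statement there (the paper's Lemma~\ref{lemma: proof of theorem case c generics inequiv}, whose proof needs a non-obvious swap trick via a bijection $S_2\to S_1$). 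The genericity lemma then has to be applied in this adapted product space with different symmetric groups $\mathrm{Sym}(S_1)\times\mathrm{Sym}(S_2)\times\mathrm{Sym}(S_2)$, which in turn needs the joint-separation property of $x(k),y(k)$ guaranteed by $\psi$ (Remark~\ref{remark: psi joint separation}). None of this is captured by treating $\Delta_k$ as the ambient fiber product.
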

\begin{lemma}\label{lemma: main proof breakdown of E}
    There are analytic equivalence relations $E_k$ for $k<n$, defined on comeager subsets of $X^\ast_k$, so that
    \begin{enumerate}
        \item $E_k$ extends $F^\ast_k$, on a comeager set;
        \item $E_{k+1}$ does not extend $F^\ast_k$ on any comeager set;
        \item $E \subset E_k$, on a comeager set, for each $k<n$. That is, on a comeager set, if $x\mathrel{E}y$ then $x\restriction k \mathrel{E_k} y\restriction k$.
    \end{enumerate}
In fact we get the following picture, on comeager sets, with $E_n = E$:
\begin{center}

\begin{tikzpicture}[
node/.style={}]
\node[node]      (F1)   at (0,0)        {$F^\ast_1$};
\node[node]      (F2) [right=0.01cm of F1]  {$\supseteq \,\,F^\ast_2$};
\node[node]      (F3) [right=0.01cm of F2]  {$\supseteq \,\,F^\ast_3$};
\node[node]      (conts) [right=0.01cm of F3]  {$\supseteq \,\,\dots$};
\node[node]      (Fn) [right=0.01cm of conts]  {$\supseteq \,\, F^\ast_n$};

\node[node]     (cont1) [below=0.01cm of F1]
{\rotatebox[origin=c]{270}{$\subseteq$}};
\node[node]     (E1) [below=0.01cm of cont1]
{$E_1$};

\node[node]     (cont2) [below=0.01cm of F2]
{\hspace{6mm}\rotatebox[origin=c]{270}{$\subseteq$}};
\node[node]     (E2) [below=0.01cm of cont2]
{$\supseteq \,\, E_2$};

\node[node]     (cont3) [below=0.01cm of F3]
{\hspace{6mm}\rotatebox[origin=c]{270}{$\subseteq$}};
\node[node]     (E3) [below=0.01cm of cont3]
{$\supseteq \,\, E_3$};

\node[node]      (lowconts) [right=0.01cm of E3]  {$\supseteq \,\,\dots$};

\node[node]     (contn) [below=0.01cm of Fn]
{\hspace{6mm}\rotatebox[origin=c]{270}{$\subseteq$}};
\node[node]     (En) [below=0.01cm of contn]
{$\supseteq \,\, E_n$};

\end{tikzpicture}
\end{center}
\end{lemma}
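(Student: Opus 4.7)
The plan is to reduce Lemma~\ref{lemma: main proof breakdown of E} to Lemma~\ref{lemma: breaking down homomorphism}, applying the latter to the identity inclusion and then converting its output into equivalence relations on the spaces $X^\ast_k$ via pullback.

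Concretely, regard the identity map $\iota\colon X^\ast_n\to X^\ast_n$ as a Borel homomorphism from $F^\ast_n$ to $E$; this is a homomorphism precisely because $E$ extends $F^\ast_n$. First I would verify the hypothesis of Lemma~\ref{lemma: breaking down homomorphism}, namely that $\iota$ does not factor through $u^n_k$ on any comeager set, for any $k<n$. Indeed, if there were a Borel homomorphism $h\colon F^\ast_k\to_B E$ with $h\circ u^n_k(x)\mathrel{E}x$ on a comeager set of $x\in X^\ast_n$, then, because the pullback of $E$ along $\iota$ is $E$ itself and $h$ is $F^\ast_k$-invariant, a Kuratowski-Ulam argument would exhibit a comeager set on which $E$ extends $F^\ast_k$, contradicting the standing assumption.

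Applying Lemma~\ref{lemma: breaking down homomorphism} with $f=\iota$ yields equivalence relations $\tilde E_k$, Borel homomorphisms $\pi^{k+1}_k\colon \tilde E_{k+1}\to_B \tilde E_k$, and $f_k\colon F_k\to_B \tilde E_k$ for $k<n$, making the diagram commute on a comeager set, with the further property that each $f_k$ fails to factor through $u^k_l$ for every $l<k$. Define the \emph{pullback} equivalence relation $E_k$ on a comeager subset of $X^\ast_k$ by
\begin{equation*}
    x\mathrel{E_k}y\iff f_k(x)\mathrel{\tilde E_k}f_k(y).
\end{equation*}
Then $E_k$ is manifestly an equivalence relation. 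Property (1) of the lemma, that $E_k$ extends $F^\ast_k$ on a comeager set, is immediate from $f_k$ being a Borel homomorphism $F_k\to_B\tilde E_k$. Property (3), that $x\mathrel{E}y$ implies $u^n_k(x)\mathrel{E_k}u^n_k(y)$ on a comeager set, follows from commutativity of the Lemma~\ref{lemma: breaking down homomorphism} diagram: on the comeager agreement set, $f_k\circ u^n_k\mathrel{\tilde E_k}\pi^n_k\circ\iota$, and $\pi^n_k$ is a homomorphism from $E$ to $\tilde E_k$.

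The main step, and the expected obstacle, is property (2): $E_{k+1}$ does not extend $F^\ast_k$ on any comeager set. I would prove this by contradiction. Suppose $E_{k+1}$ extends $F^\ast_k$ on some comeager $C\subseteq X^\ast_{k+1}$. Unfolding the pullback definition, $f_{k+1}$ is then constant modulo $\tilde E_{k+1}$ along $F^\ast_k$-classes intersected with $C$. Applying Kuratowski-Ulam to the fibration $u^{k+1}_k\colon X^\ast_{k+1}\to X^\ast_k$, the set of $x_k\in X^\ast_k$ whose fiber meets $C$ in a non-meager set is comeager, and a Borel uniformization (Jankov-von Neumann) along fibers yields a Borel map $h$, defined on a comeager subset of $X^\ast_k$ and taking values in $\dom(\tilde E_{k+1})$, with $h\circ u^{k+1}_k(x)\mathrel{\tilde E_{k+1}}f_{k+1}(x)$ on a comeager subset of $X^\ast_{k+1}$. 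The hypothesis that $E_{k+1}$ extends $F^\ast_k$ on $C$ ensures moreover that $h$ is itself an $F_k$-homomorphism into $\tilde E_{k+1}$. This contradicts the non-factoring conclusion of Lemma~\ref{lemma: breaking down homomorphism} for $f_{k+1}$. The hard part is the careful selection of $h$: it must be Borel, respect $F_k$, and satisfy the factoring identity all on a single comeager set, which is exactly what the combination of Kuratowski-Ulam with a fiberwise uniformization is designed to produce.
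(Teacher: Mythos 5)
Your proposal is circular. Lemma~\ref{lemma: breaking down homomorphism} and Lemma~\ref{lemma: main proof breakdown of E} are equivalent statements (as the remark following the lemma notes), and in the paper Lemma~\ref{lemma: breaking down homomorphism} has no independent proof: the paper's only argument for it \emph{is} the proof of Lemma~\ref{lemma: main proof breakdown of E}, via the pullback translation of Section~\ref{subsection : reformulation}. So when you "apply Lemma~\ref{lemma: breaking down homomorphism}" to the identity homomorphism and then pull back, you are invoking as a black box precisely the result you are being asked to establish. Your proposal faithfully re-derives the direction (Lemma~\ref{lemma: breaking down homomorphism} $\Rightarrow$ Lemma~\ref{lemma: main proof breakdown of E}) of the equivalence, but it supplies no content for the forward direction, which is where the work lies.

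The tell is that your argument never uses the hypothesis that $E$ is classifiable by countable structures. The paper's introduction explicitly flags that this hypothesis is used \emph{only} in the proof of Lemma~\ref{lemma: breaking down homomorphism} / Lemma~\ref{lemma: main proof breakdown of E}, and poses Question~\ref{question: breaking homomorphism} asking whether the lemma holds for all analytic equivalence relations. A proof that does not touch that hypothesis cannot be correct. The paper's actual argument is genuinely set-theoretic: it exploits the Scott analysis to assign a hereditarily countable invariant $B_x$ to each $x$ (Lemma~\ref{lemma: pin to set}), uses the $E$-pin $(\Q_n,\sigma_n)$ over the symmetric model $V(A_n)$ to produce a canonical set $B\in V(A_n)$ definable from $A_n$, truncates the transitive closure of $B$ at carefully chosen ranks to obtain a chain $B_1,\dots,B_n$ with $B_k$ definable from $A_k$ but not $A_{k-1}$ (using the Monro-model fact, Lemma~\ref{lemma: monro models subset of Mk is in Mkplus1}), and then converts each $B_k$ into the equivalence relation $E_k$ on the comeager set of $\P_k$-generics over a fixed countable model (Lemma~\ref{lemma: Ek from set in Ak}). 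None of this machinery appears in your sketch, and the categorical/uniformization tools you propose (Kuratowski--Ulam, Jankov--von~Neumann) do not substitute for it.
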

\begin{remark}
The lemma is equivalent to Lemma~\ref{lemma: breaking down homomorphism}. See Section~\ref{subsection : reformulation}.    
\end{remark}

Finally, the proof of the main theorem terminates as follows. Let $k$ be minimal so that $x\restriction k \not\mathrel{F_k} y\restriction k$. It follows from Lemma~\ref{lemma: main proof last coordinate reduction}, and that $f_k(x\restriction k) = f_n(x)\restriction k$, that $f(x) \restriction k \not\mathrel{E_{k}} f(y)\restriction k$, and therefore $f(x) \not\mathrel{E} f(y)$, as required.

We note that, without the assumption that $E$ is classifiable by countable structures, the proof works if we assume that $E$ can be decomposed as in Lemma~\ref{lemma: main proof breakdown of E} above. In the terminology of Theorem~\ref{Theorem : Main}, we get the following variation. 
\begin{thm}
    Given analytic equivalence relations $E_k$ for $k\leq n$ and a diagram of Borel homomorphisms which commute on comeager sets as in Lemma~\ref{lemma: breaking down homomorphism}, so that $f_k$ does not factor through $u^k_l$ for $l<k\leq n$, then ${F_n} \leq_B {E_n}$.
\end{thm}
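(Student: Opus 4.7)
The plan is to run the proof of Theorem~\ref{Theorem : Main reformulated} from Section~\ref{Section: proof of main thm} verbatim, using the hypothesized decomposition in place of Lemma~\ref{lemma: main proof breakdown of E}. Classifiability by countable structures enters that proof only through Lemma~\ref{lemma: main proof breakdown of E}; once an external decomposition is assumed, the remaining argument goes through for any analytic $E_n$. First I would transfer the given diagram to the ``$F_k$ side'' by taking pullbacks. For each $k\leq n$ set
\begin{equation*}
    x\mathrel{E^\ast_k}y\iff f_k(x)\mathrel{E_k}f_k(y),
\end{equation*}
an analytic equivalence relation on a comeager subset of $X_k$, identified with a subset of $X^\ast_k$ via the canonical homeomorphism. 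Since each $f_k$ is a homomorphism, $E^\ast_k\supseteq F^\ast_k$. The hypothesis that $f_k$ does not factor through $u^k_{k-1}$ on any comeager set says precisely that $E^\ast_k$ does not extend $F^\ast_{k-1}$ on any comeager set. Applying the homomorphism $\pi^n_k\colon E_n\to_B E_k$ to the commutation $\pi^n_k\circ f_n\sim f_k\circ u^n_k$ (valid on a comeager set) gives, on a comeager set of $X^\ast_n$, that $x\mathrel{E^\ast_n}y$ implies $u^n_k(x)\mathrel{E^\ast_k}u^n_k(y)$. Thus $\{E^\ast_k\}_{k\leq n}$ is a decomposition in the sense of Lemma~\ref{lemma: main proof breakdown of E}.

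Having secured the decomposition, I would define the reduction as in Section~\ref{Section: proof of main thm}. Fix sufficiently generic $\alpha,\beta_0$ and let $\phi=\gamma_0\times\gamma^{n\setminus\{0\}}$ from Definition~\ref{defn: the homomorphism f} and $\psi$ from Lemma~\ref{lemma: technical reduction to inj and sep}. As in Section~\ref{section: complexity on comeager sets}, using the generic ergodicity of the $(S_\infty)^n$-action together with Kechris' large-section uniformization \cite[18.6]{Kechris-DST-1995}, I would produce a Borel $\rho\colon X_n\to\mathrm{Sym}(N)^n$ so that $\rho(x)\cdot\phi(\psi(x))$ lands, for every $x$, inside the comeager set on which the decomposition above is valid. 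The candidate reduction is then
\begin{equation*}
    x\longmapsto f_n\bigl(\rho(x)\cdot\phi(\psi(x))\bigr),
\end{equation*}
a Borel homomorphism $F_n\to_B E_n$ by composition.

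To check the reduction property, fix $x\not\mathrel{F_n}y$ and let $k\geq 1$ be minimal with $u^n_k(\psi(x))\not\mathrel{F_k}u^n_k(\psi(y))$. Using the identity $u^n_k\circ\phi=\gamma_0\times\gamma^{k\setminus\{0\}}$, Lemma~\ref{lemma: main proof last coordinate reduction} applied to $E^\ast_k$ (which extends $F^\ast_k$ but not $F^\ast_{k-1}$ on any comeager set) yields
\begin{equation*}
    u^n_k\bigl(\phi(\psi(x))\bigr)\not\mathrel{E^\ast_k}u^n_k\bigl(\phi(\psi(y))\bigr).
\end{equation*}
By the choice of $\rho$, the contrapositive of the inclusion $E^\ast_n\subseteq(u^n_k)^{-1}(E^\ast_k)$ on the chosen comeager set then gives that the $f_n$-images of $\rho(x)\cdot\phi(\psi(x))$ and $\rho(y)\cdot\phi(\psi(y))$ are not $E_n$-related, as required. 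The main obstacle is the same as in Section~\ref{section: complexity on comeager sets}: the commutation of the diagram and the conclusion of Lemma~\ref{lemma: main proof last coordinate reduction} are available only for generic inputs, so the $\rho$-uniformization trick is indispensable to force the specific points $\phi(\psi(x))$ into the requisite comeager sets. No new ingredient beyond those already developed in Section~\ref{Section: proof of main thm} is needed.
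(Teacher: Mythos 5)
Your proposal is correct and takes essentially the same approach the paper intends: the paper presents this theorem as a remark, noting that the argument of the main theorem goes through once a decomposition in the sense of Lemma~\ref{lemma: main proof breakdown of E} is supplied, and your pullback construction of $E^\ast_k$ from the hypothesized $f_k$ and $\pi^n_k$ (using that $f_k$ not factoring through $u^k_{k-1}$ is equivalent to $E^\ast_k$ not extending $F^\ast_{k-1}$ on a comeager set, exactly as in Section~\ref{subsection : reformulation}) is the natural way to produce such a decomposition from the hypothesized diagram. The $\rho$-uniformization step you add to land in the relevant comeager set is a reasonable way to make explicit a genericity point that the paper leaves implicit.
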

We conclude the paper by proving Lemma~\ref{lemma: main proof last coordinate reduction} and Lemma~\ref{lemma: main proof breakdown of E}.

\subsection{Proof of Lemma~\ref{lemma: main proof last coordinate reduction}}
For this subsection we fix $f = f_{k+1} \colon F_{k+1} \to F_{k+1}$. Since $f$ is a homomorphism and ${x\restriction k} \mathrel{F_k} {y\restriction k}$, we may assume that $x\restriction k = y\restriction k$. Now $x(k), y(k)\in (2^\N)^\N$ are $F_1$-{\bf in}equivalent. There are three options:
\begin{enumerate}[a.]
    \item The two subsets of $2^\N$ enumerated by $x(k), y(k)$ are disjoint;
    \item one of the two sets is contained in the other;
    \item neither of the above.
\end{enumerate}
When we use set notation, such as $x(k)\cap y(k)$, or $x(k)\setminus y(k)$, we refer to the sets enumerated by $x(k), y(k)$, respectively.
We assume that $k\geq 1$. For $k=0$ the arguments are similar to Section~\ref{subsection : warmup dichotomy n=1 case}.

\subsection*{Case a} Assume that $x(k)$ and $y(k)$ enumerate disjoint subsets of $2^\N$.
Since $x\restriction k = y\restriction k$, and by the definition of $f$, we may write $f(x) = (a,r)$ and $f(y) = (a,s)$, where $a\in ((2^N)^N)^k$ and $r,s\in (2^N)^N$. 
We view $(a,r,s)$ as a member of the space $((2^N)^N)^k\times (2^N)^N\times (2^N)^N$.
The following is a consequence of our assumption that $E_{k+1}$ does not extend $F_k$ on any comeager set.
\begin{lemma}\label{lemma: proof of theorem case a generics inequiv} 
$        \forall^\ast (x,y,z)\in ((2^N)^N)^k\times (2^N)^N\times (2^N)^N\left[(x,y)\not\mathrel{E_{k+1}}(x,z) \right]$.
\end{lemma}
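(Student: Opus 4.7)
The plan is to prove the lemma by contraposition: assume that $R:=\{(x,y,z):(x,y)\mathrel{E_{k+1}}(x,z)\}$ is non-meager in $((2^N)^N)^k\times(2^N)^N\times(2^N)^N$, and derive a comeager subset of $X^\ast_{k+1}$ on which $E_{k+1}$ extends $F^\ast_k$, contradicting the hypothesis.

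First I would record the relevant invariances. Since $E_{k+1}\supseteq F^\ast_{k+1}$ and $F^\ast_{k+1}$ is, on a comeager subdomain, the orbit equivalence relation of $a_{k+1}\colon(S_\infty)^{k+1}\curvearrowright X^\ast_{k+1}$, the set $R$ is invariant under the diagonal action of $(S_\infty)^{k+1}$ on triples given by $g\cdot(x,y,z)=((g_0,\dots,g_{k-1})\cdot_{a_k}x,\,(g_{k-1},g_k)\cdot_c y,\,(g_{k-1},g_k)\cdot_c z)$. Consequently, for fixed $x$ the fiber $R_x\subseteq(2^N)^N\times(2^N)^N$ is invariant under $\mathrm{Sym}(N)\times\mathrm{Sym}(N)$ acting on $(y,z)$ by outer permutation, and for fixed $(x,y)$ the fiber $R^y_x\subseteq(2^N)^N$ is invariant under the analogous action on $z$ alone.

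Next I would apply generic ergodicity as a $0$-$1$ law. The outer-permutation action $\mathrm{Sym}(N)\curvearrowright(2^N)^N$ has comeager orbits, so invariant sets with the Baire property are meager or comeager; iterating this with Kuratowski-Ulam, $R^y_x$ is meager or comeager, the set of $y$ with $R^y_x$ non-meager is invariant and hence meager or comeager, and the set of $x$ with $R_x$ non-meager is invariant under $a_k$ and hence meager or comeager. Starting from $R$ non-meager, each of these nested sets must be comeager, so $R$ itself is comeager: $\forall^\ast(x,y,z)\,(x,y)\mathrel{E_{k+1}}(x,z)$.

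Finally I would extract the required comeager extension. Let $D_0\subseteq X^\ast_{k+1}$ be the comeager set of $(x,y)$ for which $R^y_x$ is comeager. Any two $(x,y),(x,y')\in D_0$ sharing their first $k$ coordinates are $E_{k+1}$-equivalent, by picking $z$ generic in $R^y_x\cap R^{y'}_x$ and invoking transitivity of $E_{k+1}$. The key observation is that $D_0$ is $F^\ast_{k+1}$-invariant: if $g\cdot_{a_{k+1}}(x,y)=(x^*,y^*)$, the homeomorphism $z\mapsto(g_{k-1},g_k)\cdot_c z$ carries $R^y_x$ bijectively onto $R^{y^*}_{x^*}$, so the latter is comeager as well. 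Given any $(x,y),(x',y')\in D_0$ with $x\mathrel{F^\ast_k}x'$, I find $g'\in(S_\infty)^k$ with $g'\cdot_{a_k}x=x'$ (on the injective subdomain), set $y'':=(g'(k-1),1)\cdot_c y$, and conclude $(x',y'')\in D_0$ by invariance, $(x,y)\mathrel{F^\ast_{k+1}}(x',y'')$ so $(x,y)\mathrel{E_{k+1}}(x',y'')$, and the ``same $x'$'' case inside $D_0$ gives $(x',y'')\mathrel{E_{k+1}}(x',y')$; chaining yields $(x,y)\mathrel{E_{k+1}}(x',y')$, so $D_0$ witnesses $E_{k+1}\supseteq F^\ast_k$, the desired contradiction. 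I expect the main obstacle to lie in the second step, namely carefully verifying the $0$-$1$ dichotomy for the nested Kuratowski-Ulam fibers so that the invariances line up properly with the generic ergodicity of the various $S_\infty$-actions; the third step is then clean bookkeeping using $F^\ast_{k+1}$-invariance of $D_0$.
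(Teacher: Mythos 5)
Your argument is correct and follows the same route as the paper's proof: introduce the relation $E_x$ given by $y\mathrel{E_x}z\iff(x,y)\mathrel{E_{k+1}}(x,z)$ (equivalently, the fibers $R^y_x$), use the $0$-$1$ law coming from generic ergodicity plus Kuratowski--Ulam to show that if the lemma fails then almost every $E_x$ has a comeager class, and observe that the set $D_0$ of pairs $(x,y)$ with $y$ in that comeager class is a comeager, $F^\ast_{k+1}$-invariant set on which $E_{k+1}$ extends $F^\ast_k$. One small slip worth fixing: the action $\mathrm{Sym}(N)\curvearrowright(2^N)^N$ has \emph{dense}, not comeager, orbits on a comeager set (each orbit is meager, the orbit relation being essentially $=^+$); what you need and in fact use is that invariant Baire-measurable sets are meager or comeager, which is precisely what generic ergodicity gives.
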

\begin{proof}
    Given $x\in ((2^N)^N)^k$, consider the equivalence relation $E_x$ on $(2^N)^N$, $y\mathrel{E_x}z\iff (x,y)\mathrel{E_{k+1}}(x,z)$. The lemma is equivalent to the statement: for almost every $x\in ((2^N)^N)^k$ every $E_x$ class is meager. If this fails, then for almost every $x\in ((2^N)^N)^k$ there is a comeager equivalence class in $E_x$. In this case, we conclude that $E_{k+1}$ extends $F_k$ on the comeager set of all $(x,y)\in ((2^N)^N)^k\times (2^N)^N$ so that $y$ is in the comeager equivalence class of $E_x$. 
\end{proof}

\subsubsection*{Cosmetic modifications}

As in Section~\ref{subsection : orbit ER presentation}, there is a natural action
\begin{equation*}
\mathrm{Sym}(N)^k\times \mathrm{Sym}(N)\times\mathrm{Sym}(N)    \curvearrowright  ((2^N)^N)^k\times (2^N)^N\times (2^N)^N.
\end{equation*}
More specifically, expressing the group as 
\begin{equation*}
    \mathrm{Sym}(N)^{k-1}\times {\color{blue}\mathrm{Sym}(N)}\times \mathrm{Sym}(N)\times\mathrm{Sym}(N)
\end{equation*}
 and the space as
\begin{equation*}
    ((2^N)^N)^{k-1}\times (2^N)^{\color{blue}N} \times (2^{\color{blue}N})^N \times (2^{\color{blue}N})^N,
\end{equation*}
the blue copy of $\mathrm{Sym}(N)$ acts on the three blue copies of $N$ diagonally, and the two copies of $\mathrm{Sym}(N)$ act separately on the two copies of $(2^{N})^N$, as in Section~\ref{subsection : orbit ER presentation}.
The point is that the two projections $((2^N)^N)^k\times (2^N)^N\times (2^N)^N \to ((2^N)^N)^{k+1}$ (the maps $(a,b,c)\mapsto (a,b)$ and $(a,b,c)\mapsto (a,c)$) are equivariant.

\begin{claim}\label{claim: main proof case a generic permutation works} Given a comeager set $C\subset ((2^N)^N)^k\times (2^N)^N\times (2^N)^N$, for almost any $\beta_0$ and $\alpha$,
\begin{equation*}
    \forall^\ast (g,h_1,h_2)\in \mathrm{Sym}(N)^k\times \mathrm{Sym}(N)\times\mathrm{Sym}(N)\left[(g,h_1,h_2)\cdot(a, r, s) \in C\right].
\end{equation*}
\end{claim}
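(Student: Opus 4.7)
The plan is to follow the level-by-level inductive structure of the proof of Theorem~\ref{thm: warmup proof retains complexity on comeager sets}, first handling the shared part $a$ by induction over its levels $0, 1, \dots, k-1$, and then performing one final step that incorporates both top branches $r$ and $s$ simultaneously. For $0 \leq j \leq k-1$, the induction hypothesis $(\star_j)$ states that for almost every $h \in \mathrm{Sym}(N)^{j+1}$ the fiber of $C$ over $h \cdot (a\restriction (j{+}1))$ is comeager in the remaining $((2^N)^N)^{k-j-1} \times (2^N)^N \times (2^N)^N$. The base case invokes $f(x)(0) = \gamma_0(x(0))$ together with Lemma~\ref{lemma: examples of main construction n=1 case} and Lemma~\ref{lemma: main technical sufficient condition for genericity n=1 case}, exactly as in Theorem~\ref{thm: warmup proof retains complexity on comeager sets}. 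The intermediate inductive steps also proceed as in the warmup, applying Lemma~\ref{lemma: main technical sufficient condition for genericity} (with one $Y$-component) to the pair $(a(j), a(j+1)) = (\gamma(x(j)), \gamma(x(j+1)))$ moved by the accumulated permutations, using that $x$ lies in $\mathrm{Image}(\psi)$ so that $x(j), x(j+1)$ are injective and separated, and invoking Lemma~\ref{lemma: examples of main construction} (or its $\gamma_0/\gamma$ combination variant at $j=0$) to verify the assumptions.

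The crucial final step, from $(\star_{k-1})$ to the full statement of the claim, applies Lemma~\ref{lemma: main technical sufficient condition for genericity} in its version with \emph{two} $Y$-components. Taking $S = X = Y_1 = Y_2 = N$, the plan is to apply the lemma to the triple
\[(\zeta, \xi_1, \xi_2) = \bigl(a(k-1), r, s\bigr) = \bigl(\gamma(x(k-1)), \gamma(x(k)), \gamma(y(k))\bigr),\]
moved by the accumulated permutations, with the action of $\mathrm{Sym}(X) \times \mathrm{Sym}(Y_1) \times \mathrm{Sym}(Y_2)$ accounting exactly for the diagonal permutation of the shared value-$N$ of $r, s$ and the enumeration of $a(k-1)$, together with the separate enumerations $h_1, h_2$ of $r$ and $s$. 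Condition~(1) of the lemma follows from $(\star_{k-1})$ and is preserved under finite modifications of the shared permutation. Condition~(2)(a) for each $\xi_i$ separately is obtained from the argument behind the first bullet of Lemma~\ref{lemma: examples of main construction}, using that $x(k)$ and $y(k)$ are separated.

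The main obstacle, and essentially the only genuinely new step beyond the warmup, is verifying condition~(2)(b) for the triple. Unwinding $\gamma$ as in Lemma~\ref{lemma: examples of main construction}, this amounts to finding, for any finite partial functions $\tau, \tau_1, \tau_2 \colon N \to \{0, 1\}$, infinitely many $(s, d) \in N$ so that the values
\[\alpha(x(k-1)[s, t], k', d),\quad \alpha(x(k)[t', s], d, k''),\quad \alpha(y(k)[t'', s], d, k''')\]
realize the prescribed bits as $(t, k'), (t', k''), (t'', k''')$ range over the domains of $\tau, \tau_1, \tau_2$ respectively. For a generic choice of $\alpha$ this succeeds provided all of these $\alpha$-arguments are pairwise distinct in $(2^{<\N})^{<\N} \times \N \times \N$. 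Distinctness within each of the three groups, and between the $a(k-1)$-group and either of the other two, is handled as in Lemma~\ref{lemma: examples of main construction}: $x(k-1)$ separated controls the first group, $x(k)$ and $y(k)$ injective control each top group, and choosing $d$ larger than any second or third coordinate in $\tau_1, \tau_2$ separates the $a(k-1)$-tuples from the $r$- and $s$-tuples via their middle coordinate.

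The truly new case is distinctness of tuples \emph{between} the $r$-group and the $s$-group, both of which have $d$ in the middle coordinate. If $k'' \neq k'''$, or if $t', t''$ have different lengths, the tuples differ immediately; otherwise the equal-length case requires $x(k)[t', s] \neq y(k)[t'', s]$. Here the hypothesis of Case~a enters decisively: since $x(k)$ and $y(k)$ enumerate \emph{disjoint} subsets of $2^\N$, every entry of $x(k)$ differs from every entry of $y(k)$, so for each relevant pair $(t', t'')$ one can enlarge $s$ to include a coordinate witnessing $x(k)(t'_i) \circ s \neq y(k)(t''_i) \circ s$ for some $i$. Choosing $s$ long enough to handle all such pairs simultaneously, and letting $d$ range over arbitrarily large values, yields infinitely many valid $(s, d)$ and hence a dense open set of admissible $\alpha$'s for each finite configuration. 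As in the proof of Lemma~\ref{lemma: examples of main construction}, only countably many such dense open requirements on $\alpha$ enter the argument (independently of $x, y$), so a generic $\alpha$ satisfies them all. This verifies condition~(2)(b), whereupon Lemma~\ref{lemma: main technical sufficient condition for genericity} delivers the final step and completes the claim.
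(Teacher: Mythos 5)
Your proposal is correct and follows the same overall strategy as the paper: run the $(\star_j)$ induction of the warmup theorem along the shared levels $0,\dots,k-1$, then apply Lemma~\ref{lemma: main technical sufficient condition for genericity} with two $Y$-components for the final split, reducing the new content to a verification that the triple $(\gamma(x(k-1)),\gamma(x(k)),\gamma(y(k)))$ (or the $\gamma_0$ variants at $k\le 1$) satisfies condition~(2). The paper states this verification as an unproven variation of Lemma~\ref{lemma: examples of main construction} with the added hypothesis that $y_1,y_2$ be disjoint; you correctly identify that the only genuinely new case is distinctness of $\alpha$-arguments \emph{between} the $r$-group and the $s$-group, and that disjointness of the sets enumerated by $x(k)$ and $y(k)$ (the Case~a hypothesis) supplies exactly what is needed there, so your unpacking matches the paper's intended argument.
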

Applying this claim to the comeager set $C$ we get from Lemma~\ref{lemma: proof of theorem case a generics inequiv}, we conclude that for some $(g,h_1,h_2)$, $(g,h_1)\cdot (a,r) \not\mathrel{E_{k+1}} (g,h_2)\cdot (a,s)$, and so $f(x)\not\mathrel{E_{k+1}}f(y)$, as required for Lemma~\ref{lemma: main proof last coordinate reduction}. We finish Case a. by proving the claim.

\begin{proof}[Proof of the claim]
We will use the following variation of Lemma~\ref{lemma: examples of main construction}, in the case that $k>1$.
\begin{lemma}
For a generic $\alpha$ the following holds. Suppose $x\in (2^\N)^\N$ is separated and $y_1,y_2\in (2^\N)^\N$ are injective and separated. Assume further that $y_1,y_2$ are disjoint. Then the triplet $(\zeta,\xi_1,\xi_2) = (\gamma(x),\gamma(y_1),\gamma(y_2))\in (2^S)^X\times (2^X)^{Y_1} \times (2^X)^{Y_2}$, where $S = X = Y_1 = Y_2 = N$, satisfies the conditions in part (2) of Lemma~\ref{lemma: main technical sufficient condition for genericity}.
\end{lemma}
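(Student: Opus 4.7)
The plan is to adapt the proof of Lemma~\ref{lemma: examples of main construction} essentially line-by-line, with one new ingredient to handle the interaction between $y_1$ and $y_2$ via their disjointness. Condition (2)(a) of Lemma~\ref{lemma: main technical sufficient condition for genericity} is a statement about each $\xi_i = \gamma(y_i)$ separately, and since each $y_i$ is separated, it follows directly from the first bullet of Lemma~\ref{lemma: examples of main construction} applied to $y_1$ and to $y_2$ in turn. So the only work is in condition (2)(b).

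For (2)(b), fix finite partial functions $\tau\colon N\to\{0,1\}$, $\tau_1\colon N\to\{0,1\}$, $\tau_2\colon N\to\{0,1\}$. Unwinding the definitions exactly as in Lemma~\ref{lemma: examples of main construction}, the desired conclusion is: for infinitely many $(s,d)\in N$,
\begin{equation*}
\begin{split}
\alpha(x[s,t],k,d) & =\tau(t,k)\textrm{ for all $(t,k)\in\dom\tau$},\\
\alpha(y_1[t,s],d,k) & =\tau_1(t,k)\textrm{ for all $(t,k)\in\dom\tau_1$},\\
\alpha(y_2[t,s],d,k) & =\tau_2(t,k)\textrm{ for all $(t,k)\in\dom\tau_2$}.
\end{split}
\end{equation*}
As in the proof of Lemma~\ref{lemma: examples of main construction}, using that $x$ is separated we pick $s^\ast$ so that any $s$ containing $s^\ast$ gives $x[s,t_1]\neq x[s,t_2]$ for distinct $t_1,t_2\in\dom\tau$; using injectivity of $y_1$ (resp.\ $y_2$) we pick $s^{\ast\ast}_1$ (resp.\ $s^{\ast\ast}_2$) so that any $s$ containing it gives $y_1[t,s]$ distinct for distinct $t\in\dom\tau_1$ (resp.\ for $y_2,\tau_2$).

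The new step is to also separate $y_1[t_1,s]$ from $y_2[t_2,s]$ whenever $t_1\in\dom\tau_1$ and $t_2\in\dom\tau_2$ have the same length (otherwise the tuples trivially differ in length). Since $y_1,y_2$ enumerate disjoint subsets of $2^\N$, for every such pair $(t_1,t_2)$ and every coordinate $j$ we have $y_1(t_1(j))\neq y_2(t_2(j))$ in $2^\N$, so we can find $s^{\ast\ast\ast}$ such that for any $s$ containing $s^{\ast\ast\ast}$, $y_1(t_1(j))\circ s \neq y_2(t_2(j))\circ s$ for some $j$, hence $y_1[t_1,s]\neq y_2[t_2,s]$. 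Similarly we also need to separate the $x$-tuples from the $y_i$-tuples; but here the $x$-tuple has length $|s|$ while the $y_i$-tuple has length $|t|$, so by choosing $|s|$ strictly greater than every length occurring in $\dom\tau\cup\dom\tau_1\cup\dom\tau_2$ they are automatically distinct.

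Fixing such an $s$ containing $s^\ast, s^{\ast\ast}_1, s^{\ast\ast}_2, s^{\ast\ast\ast}$ and of sufficient length, every input to $\alpha$ appearing in the three equations above is distinct. A standard genericity argument, identical to the one concluding Lemma~\ref{lemma: examples of main construction}, shows that for a dense open (hence ultimately comeager) set of $\alpha\in 2^{(2^{<\N})^{<\N}\times\N\times\N}$ there are arbitrarily large $d$ for which all the prescribed equations on $\alpha$ hold at $(s,d)$. Only countably many such dense open sets are involved, independently of the choices of $x,y_1,y_2$, so for a generic $\alpha$ we obtain infinitely many suitable $(s,d)$ and the lemma follows. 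The main thing to verify carefully is the cross-separation step $y_1[t_1,s]\neq y_2[t_2,s]$, which is where disjointness is actually used; the rest is bookkeeping parallel to Lemma~\ref{lemma: examples of main construction}.
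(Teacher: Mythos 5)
Your proof is correct and follows the same approach the paper implicitly suggests: the paper states this lemma inline as a ``variation of Lemma~\ref{lemma: examples of main construction}'' without providing a proof, and your argument is precisely the natural line-by-line adaptation that was intended. Condition (2)(a) does reduce to two independent applications of the first bullet of Lemma~\ref{lemma: examples of main construction}; for condition (2)(b) you correctly identify that the only new requirement beyond the original argument is the cross-separation of the $y_1$-tuples from the $y_2$-tuples as inputs to $\alpha$, and you correctly observe that disjointness of the sets enumerated by $y_1$ and $y_2$ gives $y_1(t_1(j))\neq y_2(t_2(j))$ unconditionally, so a suitable $s^{\ast\ast\ast}$ exists. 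One small remark: the separation of the $x$-triples $(x[s,t],k,d)$ from the $y_i$-triples $(y_i[t,s],d,k)$ is already automatic for $d$ large, since $d$ sits in the third coordinate in the first case and the second coordinate in the second case while the residual coordinates $k$ are bounded by the (finite) domains; the length-of-$s$ argument you give is therefore a valid but redundant backup. The concluding genericity/density bookkeeping is as in the original proof, with the dense open sets now indexed by the finite constraint data for all three of $\tau,\tau_1,\tau_2$, and remains a countable family independent of $x,y_1,y_2$.
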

If $k=1$, the following variation of Lemma~\ref{lemma: example of main construction - combination} will be used:
\begin{lemma}
    For generic $\alpha$ and $\beta_0$ the following holds. Suppose $x\in (2^\N)^\N$ is injective, and $y_1,y_2\in (2^\N)^\N$ are injective and separated. Then the triplet $(\zeta,\xi_1,\xi_2) =  (\gamma_0(x),\gamma(y_1),\gamma(y_2))\in (2^S)^X\times (2^X)^{Y_1} \times (2^X)^{Y_2}$, where $S = X = Y_1 = Y_2 = N$, satisfies the conditions in part (2) of Lemma~\ref{lemma: main technical sufficient condition for genericity}.
\end{lemma}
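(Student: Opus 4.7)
The plan is to adapt the proof of Lemma~\ref{lemma: example of main construction - combination}, extending it to accommodate the second sequence $y_2$. The overall scheme remains the same, and most of the verification decouples into two independent instances of the earlier arguments; the genuinely new point is the simultaneous compatibility of the constraints on $\alpha$ arising from $y_1$ and $y_2$.

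For part (a) of condition (2), I would verify the property one index at a time. Fixing $i\in\{1,2\}$ and a finite partial $\tau\colon N\to\{0,1\}$, the statement that for almost every $\alpha$ there are infinitely many $(t,k)\in N$ with $\gamma(y_i)(t,k)(\ndash)\supseteq \tau$ is exactly the first bullet in the proof of Lemma~\ref{lemma: examples of main construction}, and uses only that $y_i$ is separated. Intersecting the two countable families of dense open subsets of the $\alpha$-parameter space preserves comeagerness.

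For part (b), I would fix finite partial $\tau,\tau_1,\tau_2\colon N\to\{0,1\}$ and seek infinitely many $(s,d)\in\N^{<\N}\times\N$ satisfying $\beta_0(x\circ s)(d)(t,k)=\tau(t,k)$ on $\mathrm{dom}(\tau)$ and $\alpha(y_i[t,s],d,k)=\tau_i(t,k)$ on $\mathrm{dom}(\tau_i)$ for $i=1,2$, after unwinding $\gamma_0(x)(s,d)=\beta_0(x\circ s)(d)$ and $\gamma(y_i)(t,k)(s,d)=\alpha(y_i[t,s],d,k)$. Following the blueprint of Lemma~\ref{lemma: examples of main construction} and Lemma~\ref{lemma: example of main construction - combination}, I would first choose $s$ long enough to secure three separation conditions: (i) within each $y_i$, the tuples $y_i[t,s]$ for distinct $t\in\mathrm{dom}(\tau_i)$ are distinct, using that $y_i$ is separated; (ii) across $i=1,2$, the tuples $y_1[t^1,s]$ and $y_2[t^2,s]$ are distinct, so that the $\alpha$-constraints from the two indices land at disjoint inputs; and (iii) $x\circ s$ differs from any previously constrained value, so that $\beta_0(x\circ s)$ may be extended arbitrarily, using that $x$ is injective as in the proof of Lemma~\ref{lemma: example of main construction - combination}. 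Given such an $s$, cofinally many $d\in\N$ leave all the relevant $\alpha$- and $\beta_0$-values uncommitted, and we extend the parameters to realize $\tau,\tau_1,\tau_2$. This produces a countable family of dense open sets whose intersection is the desired comeager set.

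The hard part is the cross-index separation in (ii). The hypotheses that $y_1$ and $y_2$ are individually injective and separated are not by themselves sufficient: if $y_1(t_1)=y_2(t_2)$ as elements of $2^\N$ for some $t_1,t_2$, then $y_1[(t_1),s]=y_2[(t_2),s]$ for every $s$, and the $\alpha$-constraints at these indices collapse to the same equation, which may force $\tau_1((t_1),k)=\tau_2((t_2),k)$ and obstruct generic extension. In the intended application, Case~a of the proof of Lemma~\ref{lemma: main proof last coordinate reduction}, the sequences $y_1,y_2$ enumerate disjoint subsets of $2^\N$, so no such coincidence occurs, and for $s$ containing a long enough initial segment of $\N$ one has $y_1[t^1,s]\neq y_2[t^2,s]$ for all relevant $t^1,t^2$. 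This disjointness should accordingly be incorporated as a hypothesis (as it already is in the parallel lemma for the $k>1$ case stated just above), and with it in hand the remainder of the argument is a direct merger of the proofs of the two source lemmas.
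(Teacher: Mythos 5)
Your approach is sound and follows the natural blueprint of merging the proofs of Lemma~\ref{lemma: examples of main construction} and Lemma~\ref{lemma: example of main construction - combination}; the paper itself does not give a proof for this variant, so there is no text to compare against directly.

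Your key observation about the missing disjointness hypothesis is correct, and it is a genuine gap in the lemma as stated. If $y_1(t_1)=y_2(t_2)$ as reals then $y_1\circ(t_1)=y_2\circ(t_2)$ as members of $(2^\N)^{<\N}$, whence $y_1[(t_1),s]=y_2[(t_2),s]$ for \emph{every} $s$, and the constraint $\alpha(y_1[(t_1),s],d,k)=\tau_1((t_1),k)$ collides with $\alpha(y_2[(t_2),s],d,k)=\tau_2((t_2),k)$. Since part~(2)(b) of Lemma~\ref{lemma: main technical sufficient condition for genericity} quantifies over all $\tau_1,\tau_2$, one may choose them to disagree at this shared location, making the required $(s,d)$ nonexistent. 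The parallel lemma for $k>1$ stated just before this one does include the disjointness hypothesis, and both are only applied in Case~a where the sequences $x(k)$ and $y(k)$ enumerate disjoint sets, so the downstream argument is unaffected; but the hypothesis should be added here as you propose.

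One place where the sketch is slightly compressed is the coordination between the $\alpha$-constraints and the $\beta_0$-constraint for a fixed good $s$. The $\beta_0$-constraint $\beta_0(x\circ s)(d)\supseteq\tau$ is governed by the comeager sets $C_n$ built into $\beta_0$, which for generic $\beta_0$ yields an \emph{infinite} (not cofinite) set of admissible $d$; the $\alpha$-constraints can then be realized at a sufficiently large such $d$ by a density argument, since as $d$ increases the triples $(y_i[t,s],d,k)$ leave any previously committed finite piece of $\alpha$. This is exactly the structure of the argument for Lemma~\ref{lemma: example of main construction - combination}, and your separation conditions (i), (ii), (iii) are what make the required $\alpha$-inputs pairwise distinct, with (ii) now relying on the added disjointness. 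The countable family of dense open sets is indexed only by the finite data $(\tau,\tau_1,\tau_2,s,m)$ and a finite approximation of $\alpha$, independently of $x,y_1,y_2$, exactly as noted at the end of the proof of Lemma~\ref{lemma: examples of main construction}, so the comeager set of good $(\alpha,\beta_0)$ is uniform as required.
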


As in Claim 5.4 
we get: $\forall^\ast (h_0,\dots,h_{k-1}) \in \mathrm{Sym}(N)^k$ the triplet 
    \begin{equation*}
        ((h_{k-2},h_{k-1})\cdot f(x(k-1)), (h_{k-1}, \mathrm{id})\cdot f(x(k)), (h_{k-1}, \mathrm{id})\cdot f(y(k))) \in (2^N)^N \times (2^N)^N \times (2^N)^N
    \end{equation*}
    satisfies the conditions of Lemma~\ref{lemma: main technical sufficient condition for genericity} with $k=2$, $S = X = Y_1 = Y_2 = N$, with respect to the comeager set
    \begin{equation*}
    \set{(a,b,c) \in ((2^N)^N)^3}{((h\restriction k-1)\cdot(f(x)\restriction k-1), a , b, c) \in C}
    \end{equation*}
We conclude from Lemma~\ref{lemma: main technical sufficient condition for genericity} that
\begin{equation*}
     (\forall^\ast (h_0,\dots,h_{k-1})\in\mathrm{Sym}(N)^{k}) (\forall^\ast (h'_{k-1},h^1_k,h^2_k)\in \mathrm{Sym}(N)^3)
\end{equation*}
\begin{equation*}
((h\restriction k-1)\cdot(f(x)\restriction k-1), (h_{k-2},h'_{k-1}\cdot h_{k-1})\cdot f(x(k-1)), (h'_{k-1}\cdot h_{k-1}, h^1_k)\cdot f(x(k)), (h'_{k-1}\cdot h_{k-1}, h^2_k)\cdot f(y(k)) )  
\end{equation*}
is in $C$, and therefore $\forall^\ast (h_0,\dots,h_{k-1},h^1_k,h^2_k)\in\mathrm{Sym}(N)^{k}\times\mathrm{Sym}(N)\times\mathrm{Sym}(N)$
\begin{equation*}
((h\restriction k-1)\cdot(f(x)\restriction k-1), (h_{k-2}, h_{k-1})\cdot f(x(k-1)), (h_{k-1}, h^1_k)\cdot f(x(k)), (h_{k-1}, h^2_k)\cdot f(y(k)))\in C,  
\end{equation*}
concluding the proof of the claim.
\end{proof}

We remark that if $k=0$, the proof is similar, using the following variation of Lemma~\ref{lemma: examples of main construction n=1 case}.
\begin{lemma}        
Suppose $x,y \in (2^\N)^\N$ are injective enumerations of two disjoint subsets of $2^\N$. Then for any comeager $C\subset (2^N)^N\times (2^N)^N$, for a generic $\beta_0$, the pair $(\gamma_0(x),\gamma_0(y))\in (2^N)^N\times (2^N)^N$ satisfies the assumption in Lemma~\ref{lemma: main technical sufficient condition for genericity n=1 case}, with $X=N$, $Y_1 = Y_2 = N$, and $M_1 = M_2 = \N$.
\end{lemma}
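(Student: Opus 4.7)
The strategy is to mirror the proof of Lemma~\ref{lemma: examples of main construction n=1 case}, but carry the construction out for the two disjoint injective sequences in parallel. The plan is first to invoke Lemma~\ref{lemma: main technical sufficient condition for genericity n=1 case} with $k=2$, $X = Y_1 = Y_2 = N$, and $M_1 = M_2 = \N$ applied to the given comeager $C$, producing a comeager $D' \subset (2^N)^\N \times (2^N)^\N$ against which it suffices to verify the following joint extension property for the pair $(\gamma_0(x),\gamma_0(y))$: for any finite partial injections $\tau_1,\tau_2 \colon \N \to N$ there exist extensions $\alpha_1,\alpha_2 \colon \N \to N$ with $(\gamma_0(x)\circ\alpha_1,\,\gamma_0(y)\circ\alpha_2) \in D'$.

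Next I would strengthen the choice of the comeager sets $C_n \subset ((2^N)^\N)^n$ used in the construction of $\beta_0 = \alpha_0 \circ \iota$. Fix a bijection $e \colon \N \to \N^{<\N}\setminus\{\emptyset\}$, set $N_n = \{e(1),\dots,e(n)\}$, and fix a bijection $s_n \colon \N \to N_n \times \N$ so that the identification $((2^N)^\N)^n \cong (2^N)^{N_n \times \N}$ combines with $\hat{s}_n$ to give a homeomorphism to $(2^N)^\N$. In addition to the constraints already imposed on $C_n$, require that $C_{2n}$ be contained in $(\hat{\sigma}_1 \times \hat{\sigma}_2)^{-1}(D')$ for every pair of bijections $\sigma_1,\sigma_2 \colon \N \to N_n \times \N$ agreeing with $s_n$ on a cofinite set, using the identification $((2^N)^\N)^{2n} \cong ((2^N)^\N)^n \times ((2^N)^\N)^n$. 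There are only countably many such pairs, and each preimage is comeager, so this refinement can be absorbed into a comeager choice of $C_{2n}$.

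To verify the extension property, given $\tau_1,\tau_2$ pick $n$ large enough that both images lie in $N_n \times \N$, and extend each $\tau_i$ to a bijection $\alpha_i \colon \N \to N_n \times \N$ agreeing with $s_n$ on all but finitely many values. Unwinding the definitions, the pair $(\gamma_0(x)\circ\alpha_1,\gamma_0(y)\circ\alpha_2)$ coincides, under $\hat{\alpha}_1 \times \hat{\alpha}_2$, with the tuple
\[
\bigl(\beta_0(x \circ e(1)), \dots, \beta_0(x \circ e(n)),\; \beta_0(y \circ e(1)), \dots, \beta_0(y \circ e(n))\bigr) \in ((2^N)^\N)^{2n}.
\]
The $2n$ inputs $x\circ e(i),\,y\circ e(j)$ are pairwise distinct elements of $(2^\N)^{<\N}$: injectivity of $x$ and of $y$ (together with injectivity of $e$) handles distinctness within each half, and since each $e(i)$ is non-empty and the sets enumerated by $x$ and $y$ are disjoint, $x \circ e(i) \neq y \circ e(j)$ for all $i,j$. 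By the defining property of $\alpha_0$ and the injectivity of $\iota$, the tuple therefore lies in $C_{2n}$, and hence $(\gamma_0(x)\circ\alpha_1,\gamma_0(y)\circ\alpha_2) \in D'$, as required.

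The main subtlety I anticipate is purely bookkeeping: excluding $\emptyset$ from the range of $e$ is essential, since otherwise $x \circ \emptyset$ and $y \circ \emptyset$ both equal the empty sequence and the pairwise-distinctness argument underpinning the use of $\alpha_0$ would fail. One must also confirm that the new family of comeager constraints on $C_{2n}$ coexists with all previously imposed constraints from Lemmas~\ref{lemma: examples of main construction n=1 case} and \ref{lemma: example of main construction - combination}; this is automatic because the total collection of constraints remains countable.
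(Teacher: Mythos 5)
Your instinct to worry about the empty sequence is correct, but the fix you chose creates a new problem that breaks the argument. Once you set $e\colon\N\to\N^{<\N}\setminus\{\emptyset\}$, the sets $N_n\times\N$ never cover any point of $\{\emptyset\}\times\N\subset N$. So for a finite partial injection $\tau_i\colon\N\to N$ whose image contains some $(\emptyset,k)$, there is \emph{no} $n$ for which ``both images lie in $N_n\times\N$,'' and the extension step is simply unavailable. You cannot sidestep this by allowing $\alpha_i$ to be merely injective into $N$: the whole point of your argument is that $\alpha_i$ is a bijection onto $N_n\times\N$ agreeing with $s_n$ cofinitely, so that $\hat\alpha_1\times\hat\alpha_2$ carries the pair into $D'$ via the constraint on $C_{2n}$.

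The deeper issue is that this is not just a bookkeeping gap: the unrestricted extension property asserted by the lemma cannot hold. For \emph{every} $\beta_0$ and every $x,y$, one has $\gamma_0(x)(\emptyset,k)=\beta_0(\langle\rangle)(k)=\gamma_0(y)(\emptyset,k)$; the two sequences agree on all of $\{\emptyset\}\times\N$ by construction. Hence if $\tau_1(m_1)=\tau_2(m_2)=(\emptyset,k)$, then for any extensions $\alpha_1,\alpha_2$ the pair $(\gamma_0(x)\circ\alpha_1,\gamma_0(y)\circ\alpha_2)$ is forced into the closed nowhere dense set $W=\{(\eta_1,\eta_2):\eta_1(m_1)=\eta_2(m_2)\}$, and one can choose $C$ comeager with the associated $D'$ equal to the complement of $W$. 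So there is a genuine degeneracy here that no choice of $\beta_0$ can cure. What \emph{does} hold, and is what the application needs, is the extension property restricted to $\tau_1,\tau_2$ whose images avoid $\{\emptyset\}\times\N$; since such finite partial permutations are dense in $\mathrm{Sym}(N)^2$, this restricted version still yields density of the Vaught transform in the proof of Lemma~\ref{lemma: main technical sufficient condition for genericity n=1 case}. Your construction (with $e$ ranging over $\N^{<\N}\setminus\{\emptyset\}$) in fact proves exactly this restricted statement, so the repair is to state and use that weaker hypothesis rather than the one literally written in Lemma~\ref{lemma: main technical sufficient condition for genericity n=1 case}; alternatively one can modify $\gamma_0$ (e.g.\ by prepending a fixed entry of $x$ to $x\circ t$ before applying $\beta_0$) so that the degenerate cell disappears.
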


\subsection*{Case b} Assume that $x(k)$ enumerates a subsets of $y(k)$. We skip the details of this case, as they are similar and slightly simpler than Case c. 

\subsection*{Case c} Assume that $x(k)$ and $y(k)$ enumerate two sets so that $x(k) \cap y(k)$, $x(k)\setminus y(k)$, and $y(k)\setminus x(k)$ are not empty. As before we focus on the case $k\geq 1$.
We may assume that any member of $2^N$ which appears both in $x(k)$ and $y(k)$ appears in the same coordinate $x(k)(i), y(k)(i)$.
Recall the definition of $\gamma(x(k))\in (2^N)^N$. We may identify it with a member of $(2^N)^{S_1\sqcup S_2}$, where $S_1$ is the set of all $(t,k)\in N$ so that the image of $t$ is contained in $x(k)\cap y(k)$, and $S_2= N\setminus S_1$. Note that both $S_1$ and $S_2$ are infinite. The space $(2^N)^{S_1\sqcup S_2}$ is naturally identified with $(2^N)^{S_1}\times (2^N)^{S_2}$, giving a homeomorphism
\begin{equation*}
 \iota\colon ((2^N)^N)^k \times (2^N)^{S_1} \times (2^N)^{S_2} \to ((2^N)^N)^k \times (2^N)^{N}.
\end{equation*}
Note that we may write $f(x) = \iota(a,\xi_0,\xi_1)$ and $f(y)=\iota(a,\xi_0,\xi_2)$ for some $a\in ((2^N)^N)^k$, $\xi_0\in (2^N)^{S_1}$, $\xi_1,\xi_2\in (2^N)^{S_2}$.
The following is a consequence of our assumption that $E_{k+1}$ does not extend $F_k$ on a comeager set.
\begin{lemma}\label{lemma: proof of theorem case c generics inequiv}
 $\forall^\ast (a,\xi_0,\xi_1,\xi_2)\in ((2^N)^N)^k \times (2^N)^{S_1} \times (2^N)^{S_2} \times (2^N)^{S_2} \left[ \iota(a,\xi_0,\xi_1) \not\mathrel{E_{k+1}} \iota(a,\xi_0,\xi_2) \right]$
\end{lemma}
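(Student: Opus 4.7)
My plan is to mimic the proof of Lemma~\ref{lemma: proof of theorem case a generics inequiv}, adapted to the case where $x(k), y(k)$ overlap (so the last coordinate decomposes as $(2^N)^{S_1}\times (2^N)^{S_2}$). For each fixed $(a,\xi_0)$ define the equivalence relation on $(2^N)^{S_2}$
\[
\xi_1 \mathrel{E_{a,\xi_0}} \xi_2 \iff \iota(a,\xi_0,\xi_1)\mathrel{E_{k+1}} \iota(a,\xi_0,\xi_2).
\]
The lemma is equivalent to: for comeager $(a,\xi_0)$, every $E_{a,\xi_0}$-class is meager. Supposing this fails, I will derive a contradiction with the hypothesis that $E_{k+1}$ does not extend $F_k^\ast$ on any comeager set.

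First, at each fixed $(a,\xi_0)$, I would show each $E_{a,\xi_0}$-class is either meager or comeager by generic ergodicity of the natural action of $\mathrm{Sym}(S_2)\leq\mathrm{Sym}(N)$ (permutations fixing $S_1$ pointwise, embedded as the $(k+1)$-th factor of $\mathrm{Sym}(N)^{k+1}$ with identities elsewhere) on $(2^N)^{S_2}$: this action fixes $(a,\xi_0)$, is a subaction of the action from Section~\ref{subsection : orbit ER presentation} inducing $F_{k+1}^\ast$ so it preserves $E_{k+1}$, and has dense orbits on the comeager set of injective sequences. Next, the set $B = \{(a,\xi_0) : \text{some }E_{a,\xi_0}\text{-class is comeager}\}$ is analytic and invariant under a generically ergodic ambient action on $((2^N)^N)^k \times (2^N)^{S_1}$, so $B$ is itself either meager or comeager. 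Under the failure assumption $B$ is non-meager, hence comeager. For $(a,\xi_0)\in B$ let $C_{a,\xi_0}$ be the unique comeager $E_{a,\xi_0}$-class; iterated Kuratowski-Ulam then yields that
\[
C^\ast = \{\iota(a,\xi_0,\xi_1) : (a,\xi_0)\in B,\ \xi_1\in C_{a,\xi_0}\}
\]
is comeager in $X_{k+1}^\ast$, and any two points of $C^\ast$ sharing both $a$ and $\xi_0$ are $E_{k+1}$-equivalent.

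Finally, I would upgrade this to the statement ``$E_{k+1}$ extends $F_k^\ast$ on a comeager set''. For $x,y\in C^\ast$ with $x\mathrel{F_k^\ast}y$, take $(g,h)\in\mathrm{Sym}(N)^{k+1}$ witnessing $F_k^\ast$-equivalence of the first $k$ coordinates; by a Baire-category argument find $h$ so that $(g,h)\cdot x\in C^\ast$, then conclude via $F_{k+1}^\ast$-invariance of $E_{k+1}$ that $x\mathrel{E_{k+1}}y$. The main obstacle I anticipate is making the invariance of $B$ precise under the coupled action from Section~\ref{subsection : orbit ER presentation} (where moving $(a,\xi_0)$ also perturbs the last coordinate), and the Vaught-transform style upgrade from ``same $(a,\xi_0)$'' to arbitrary $F_k^\ast$-equivalence of first $k$ coordinates; both are technical rather than conceptually new, but require care to ensure the comeager-set manipulations remain compatible with the group action.
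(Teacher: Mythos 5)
Your setup — fixing $(a,\xi_0)$ and defining $E_{a,\xi_0}$ on $(2^N)^{S_2}$, then applying generic ergodicity of $\mathrm{Sym}(S_2)$, and a meager/comeager dichotomy for the set $B$ — is reasonable as far as it goes, but there is a genuine gap in the final ``upgrade'' step, and it is not merely technical. Your comeager set $C^\ast$ only certifies $E_{k+1}$-equivalence for two points of the form $\iota(a,\xi_0,\xi_1)$, $\iota(a,\xi_0,\xi_1')$ that share the \emph{same} $(a,\xi_0)$. To conclude that $E_{k+1}$ extends $F^\ast_k$ on a comeager set you must handle $x=\iota(a,\xi_0,\xi_1)$ and $y=\iota(a',\xi_0',\xi_1')$ with $a\mathrel{F^\ast_k}a'$ but $\xi_0\neq\xi_0'$. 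Choosing $(g,h)$ so that $(g,h)\cdot x\in C^\ast$ lands you at some $\iota(a',\zeta_0,\zeta_1)$, but the group action on the last coordinate (through $g(k-1)$ on the exponent and $h$ on the subscript $N=S_1\sqcup S_2$) gives no control over $\zeta_0$; in particular $h$ generically mixes $S_1$ and $S_2$, so there is no reason for $\zeta_0$ to equal $\xi_0'$, and your $C^\ast$ gives no information relating two points with different $S_1$-parts. This is exactly where the paper's proof diverges: it works with the \emph{coarser} relation $E_a$ on the full $(2^N)^{S_1}\times(2^N)^{S_2}$ and bridges between distinct $\xi_0$-values using the crucial observation that a bijection $S_2\to S_1$ induces an $E_a$-preserving ``swap'' $(\zeta_0,\zeta_1)\mapsto(s^{-1}(\zeta_1),s(\zeta_0))$ (this is a legitimate group move since $S_1$ and $S_2$ are both infinite). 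From ``$(\xi_0,\xi_1)\mathrel{E_a}(\xi_0,\xi_2)$ generically'' one then chains
\[
(\xi_0,\xi_1)\mathrel{E_a}(\xi_0,s(\xi_2))\mathrel{E_a}(\xi_2,s(\xi_0))\mathrel{E_a}(\xi_2,\xi_3),
\]
producing a comeager $E_a$-class and reducing to the clean situation of Case~a. Without this swap (or an equivalent device for changing $\xi_0$), the step you describe as ``Vaught-transform style upgrade'' does not close, so I would not call it technical rather than conceptual — it is the main new content of Case~c.
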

\begin{proof}
Assume for contradiction that the statement fails. Since almost every $E_{k+1}$ class is dense, and $\iota$ is a homeomorphism, it follows that  \[\forall^\ast (a,\xi_0,\xi_1,\xi_2)\in ((2^N)^N)^k \times (2^N)^{S_1} \times (2^N)^{S_2} \times (2^N)^{S_2} \left[ \iota(a,\xi_0,\xi_1) \mathrel{E_{k+1}} \iota(a,\xi_0,\xi_2) \right]\]
Given $a\in ((2^N)^N)^k$, consider the equivalence relation $E_a$ on $(2^N)^{S_1}\times (2^N)^{S_2}$, $(\xi_0,\xi_1)\mathrel{E_a}(\zeta_0,\zeta_1)\iff \iota(a,\xi_0,\xi_1)\mathrel{E_{k+1}}\iota(a,\zeta_0,\zeta_1)$. Then for almost every $a\in ((2^N)^N)^k$, $\forall^\ast (\xi_0,\xi_1,\xi_2)\in (2^N)^{S_1} \times (2^N)^{S_2} \times (2^N)^{S_2} \left[ (\xi_0,\xi_1) \mathrel{E_{a}} (\xi_0,\xi_2) \right]$. 

Fix a bijection $S_2 \to S_1$, giving a homeomorphism $s\colon (2^N)^{S_1} \to (2^N)^{S_2}$. Then 
\begin{equation*}
    (\forall^\ast a\in ((2^N)^N)^k) (\forall^\ast (\xi_0,\xi_1,\xi_2)\in (2^N)^{S_1} \times (2^N)^{S_2} \times (2^N)^{S_1}) \left[ (\xi_0,\xi_1) \mathrel{E_{a}} (\xi_0, s(\xi_2)) \right].
\end{equation*}
Note that for any $a$ the map $(\zeta_0,\zeta_1)\mapsto (s^{-1}(\zeta_1),s(\zeta_0))$ is an $E_a$-invariant homeomorphism $(2^N)^{S_1}\times (2^N)^{S_2} \to (2^N)^{S_1}\times (2^N)^{S_2}$. 
We conclude that 
\begin{equation*}
    (\forall^\ast a\in ((2^N)^N)^k) (\forall^\ast (\xi_0,\xi_1,\xi_2,\xi_3)\in (2^N)^{S_1} \times (2^N)^{S_2} \times (2^N)^{S_1} \times (2^N)^{S_2})
\end{equation*}
\begin{equation*} 
    (\xi_0,\xi_1)\mathrel{E_a} (\xi_0, s(\xi_2)) \mathrel{E_a} (\xi_2, s(\xi_0)) \mathrel{E_a} (\xi_2,\xi_3)
\end{equation*}
That is, for almost every $a\in ((2^N)^N)^k$ there is a comeager equivalence class for $E_a$. As in Lemma~\ref{lemma: proof of theorem case a generics inequiv} we conclude that $E_{k+1}$ extends $F_k$ on a comeager set, a contradiction. 
\end{proof}

\subsubsection*{Cosmetic modifications}
As in Section~\ref{subsection : orbit ER presentation}, there is a natural action
\begin{equation*}
\mathrm{Sym}(N)^k\times \mathrm{Sym}(S_1)\times\mathrm{Sym}(S_2)\times\mathrm{Sym}(S_2)    \curvearrowright  ((2^N)^N)^k\times (2^N)^{S_1}\times (2^N)^{S_2}\times (2^N)^{S_2}.
\end{equation*}
More specifically, expressing the group as 
\begin{equation*}
    \mathrm{Sym}(N)^{k-1}\times {\color{blue}\mathrm{Sym}(N)}\times \mathrm{Sym}(S_1)\times\mathrm{Sym}(S_2)\times\mathrm{Sym}(S_2)
\end{equation*}
and the space as
\begin{equation*}
    ((2^N)^N)^{k-1}\times (2^N)^{\color{blue}N} \times (2^{\color{blue}N})^{S_1} \times (2^{\color{blue}N})^{S_2}\times (2^{\color{blue}N})^{S_2},
\end{equation*}
the blue copy of $\mathrm{Sym}(N)$ acts on the four blue copies of $N$ diagonally, while the groups $\mathrm{Sym}(S_1)$, $\mathrm{Sym}(S_2)$, $\mathrm{Sym}(S_2)$ act separately on the spaces $(2^N)^{S_1}$, $(2^N)^{S_2}$, $(2^N)^{S_2}$.
The point is that the projection maps $((2^N)^N)^k\times (2^N)^{S_1}\times (2^N)^{S_2}\times (2^N)^{S_2} \to ((2^N)^N)^{k+1}$, $(a,b,c,d) \mapsto \iota(a,b,c)$ and $(a,b,c,d) \mapsto \iota(a,b,d)$, are equivariant.

The following holds for a generic choice of $\beta_0$ and $\alpha$ as in Section~\ref{section: the main construction}.
\begin{claim} Given a comeager set 
\begin{equation*}
C\subset ((2^N)^N)^k \times (2^N)^{S_1} \times (2^N)^{S_2} \times (2^N)^{S_2},    
\end{equation*}
\begin{equation*}
    \forall^\ast (g,\delta_0,\delta_1,\delta_2)\in \mathrm{Sym}(N)^k \times \mathrm{Sym}(S_1) \times \mathrm{Sym}(S_2) \times \mathrm{Sym}(S_2),
\end{equation*}
\begin{equation*}
(g\cdot a,(g(k-1),\delta_0)\cdot \xi_0, (g(k-1),\delta_1)\cdot \xi_1, (g(k-1),\delta_1)\cdot\xi_2) \in C.
\end{equation*}
\end{claim}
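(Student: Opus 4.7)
The plan is to follow the structure of the proof of the Case~a claim, replacing the two-factor group $\mathrm{Sym}(N)\times\mathrm{Sym}(N)$ at the last stage by the three-factor group $\mathrm{Sym}(S_1)\times\mathrm{Sym}(S_2)\times\mathrm{Sym}(S_2)$. This will require a three-parameter version of Lemma~\ref{lemma: main technical sufficient condition for genericity} (whose proof is identical to the two-parameter version, with one additional $Y_i$-factor carried through the combinatorics) and a three-target analog of Lemma~\ref{lemma: examples of main construction}.

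First, I would propagate genericity through the first $k-1$ coordinates of $a\in((2^N)^N)^k$ exactly as in the inductive proof of $(\star_k)$ in Section~\ref{section: complexity on comeager sets}. Viewing $C$ as a comeager subset of $((2^N)^N)^{k-1}\times (2^N)^N\times(2^N)^{S_1}\times(2^N)^{S_2}\times(2^N)^{S_2}$, the same inductive argument, which relies only on the injectivity and separatedness of $x(0),\dots,x(k-1)$ (guaranteed by $x\in\mathrm{Image}(\psi)$ via Lemma~\ref{lemma: technical reduction to inj and sep}) and on Lemma~\ref{lemma: main technical sufficient condition for genericity}, yields that $\forall^\ast(h_0,\dots,h_{k-2})\in\mathrm{Sym}(N)^{k-1}$ the fiber of $C$ at $(h_0,\dots,h_{k-2})\cdot(a\restriction k-1)$ is comeager.

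Next, at the coupling between coordinate $k-1$ and the triple $(\xi_0,\xi_1,\xi_2)$, I would verify that for $\forall^\ast(h_0,\dots,h_{k-1})\in\mathrm{Sym}(N)^k$ the quadruple
\[\bigl((h_{k-2},h_{k-1})\cdot\gamma(x(k-1)),\;(h_{k-1},\mathrm{id})\cdot\xi_0,\;(h_{k-1},\mathrm{id})\cdot\xi_1,\;(h_{k-1},\mathrm{id})\cdot\xi_2\bigr)\]
satisfies the hypotheses of the three-parameter version of Lemma~\ref{lemma: main technical sufficient condition for genericity} with $(Y_1,Y_2,Y_3)=(S_1,S_2,S_2)$ and $S=X=N$, relative to the comeager set of those $(b,c_0,c_1,c_2)$ for which $C_{(h\restriction k-1)\cdot(a\restriction k-1),b,c_0,c_1,c_2}$ remains comeager. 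Condition~(1) follows from the previous paragraph (as finite permutations of $X=N$ are absorbed by the $\forall^\ast$ quantifier). For condition~(2), I would adapt the combinatorial argument of Lemma~\ref{lemma: examples of main construction}: separatedness of $x(k-1)$ yields a finite $s^\ast\in\N^{<\N}$ so that $x(k-1)[s,t_1]\neq x(k-1)[s,t_2]$ for any $s\supseteq s^\ast$ and distinct $t_1,t_2$; injectivity and separatedness of $x(k),y(k)$ yield an $s^{\ast\ast}$ so that for $s\supseteq s^{\ast\ast}$ the tuples $x(k)[t,s]$ and $y(k)[t,s]$ remain distinct on the relevant domains; and a standard diagonal genericity argument over countably many dense open conditions on $\alpha$ realizes the desired pattern for infinitely many $d$. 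Applying the three-parameter Lemma~\ref{lemma: main technical sufficient condition for genericity} and composing quantifiers as in the concluding step of the Case~a claim then gives the statement. The case $k=0$ is handled analogously using Lemma~\ref{lemma: examples of main construction n=1 case} and the corresponding three-target variant of Lemma~\ref{lemma: main technical sufficient condition for genericity n=1 case}.

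The main obstacle is the verification of condition~(2)(b) with three simultaneous targets, one of which (namely $\xi_0$) lives on the restricted index set $S_1\subset N$ rather than on all of $N$. The key observation is that since $x\restriction k=y\restriction k$, we have $\gamma(x(k))\restriction S_1=\gamma(y(k))\restriction S_1=\xi_0$, so the $\tau_0$-portion of the pattern $(\star)$ probes $\alpha$ only on inputs derived from $x(k)$, reducing that coordinate effectively to a single-generator condition. Moreover $S_1$ is infinite, providing the needed supply of tuples $t$ with image in $x(k)\cap y(k)$ to realize any finite $\tau_0$. With these two observations in place, the diagonal construction on $\alpha$ from the proof of Lemma~\ref{lemma: examples of main construction} extends without essential change, completing the proof of the claim.
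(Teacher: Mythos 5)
Your overall scaffolding is right: run the inductive genericity propagation as in Section~\ref{section: complexity on comeager sets}, then verify the hypotheses of Lemma~\ref{lemma: main technical sufficient condition for genericity} (which, as stated, already handles an arbitrary number $k$ of factors $Y_i$, so no ``three-parameter version'' needs to be reproved) for the quadruple at coordinates $k-1,k$, using a three-target variant of Lemma~\ref{lemma: examples of main construction}. However, you have misidentified where the new difficulty lies, and in doing so you omit the one ingredient that actually makes Case~c work.

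You claim the main obstacle is condition~(2)(b), where the only new feature is that $\tau_1$ has domain in $S_1$ rather than $N$. But since the $\tau_i$ in (2)(b) are fixed finite partial functions and the element $x\in X=N$ we must produce is unconstrained, (2)(b) goes through essentially as in Lemma~\ref{lemma: examples of main construction} (with a few more tuple-collision cases to check between the $y_1$- and $y_2$-derived tuples, all handled by injectivity plus coherence of the enumerations on the intersection). The genuinely new requirement is condition~(2)(a) for $Y_1 = S_1$: there one must produce infinitely many $(s,d)\in S_1$, i.e.\ an $s$ whose entries are \emph{indices at which $x(k)$ and $y(k)$ agree}, and which \emph{simultaneously} separates the finitely many relevant $t$'s. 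Ordinary separatedness of $x(k)$ and of $y(k)$ individually --- which is all you invoke, via ``injectivity and separatedness of $x(k),y(k)$'' --- does not give this: the separating index $i$ it produces may well lie in $x(k)\setminus y(k)$, forcing $(s,d)$ into $S_2$. What is needed, and what the paper explicitly assumes in the Case~c variant lemma, is the \emph{jointly separated} property from Remark~\ref{remark: psi joint separation}, which is precisely what $\psi$ was engineered to guarantee. Your observation that ``$S_1$ is infinite'' is true but irrelevant; infinitude alone does not yield the required $s^\ast$ with image inside the agreement set. Without invoking joint separation you cannot close condition~(2)(a) for the $S_1$-coordinate, so the proof as written has a real gap. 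The rest of your argument --- the cosmetic equivariance setup, the composition of $\forall^\ast$-quantifiers, and the $k=0$ reduction to a three-target variant of Lemma~\ref{lemma: examples of main construction n=1 case} --- is fine.
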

Applying this claim to the comeager set $C$ we get from Lemma~\ref{lemma: proof of theorem case c generics inequiv}, we conclude that for some $(g,\delta_0,\delta_1,\delta_2)$, \[\iota((g,\delta_0,\delta_1)\cdot (a,\xi_0,\xi_1)) \not\mathrel{E_{k+1}} \iota((g,\delta_0,\delta_2)\cdot (a,\xi_0,\xi_2)),\] and so $\iota((a,\xi_0,\xi_1)) \not\mathrel{E_{k+1}} \iota((a,\xi_0,\xi_2))$, that is, $f(x)\not\mathrel{E_{k+1}}f(y)$, as required for Lemma~\ref{lemma: main proof last coordinate reduction}.

The proof of the claim is similar to that in Case a., where the following variations of Lemma~\ref{lemma: examples of main construction} and Lemma~\ref{lemma: example of main construction - combination} are used. Recall Remark~\ref{remark: psi joint separation}. Since $x,y$ are in the image of $\psi$, then $x(k), y(k)$ are \textbf{jointly separated}, that is,  for any distinct $n,k\in\N$ there is $i$ in $x(k)\cap y(k)$ so that $x(k)(i)=y(k)(i)$ separates $n,k$.
\begin{lemma}
For generic $\alpha$ and $\beta_0$ the following holds. Fix $x\in (2^\N)^\N$, and $y_1,y_2\in (2^\N)^\N$ so that the sets $y_1\cap y_2$, $y_1\setminus y_2$, $y_2\setminus y_1$, are non-empty. Assume that $y_1,y_2$ are injective and jointly separated.
Let $S_1\subset N$ be the set of all $(t,k)$ for which the image of $t$ is contained in $y_1\cap y_2$, $S_2 = N\setminus S_1$. Let $\xi_1\in (2^N)^{S_1}$, $\xi_2,\xi_3\in (2^N)^{S_2}$ be so that $(\xi_1,\xi_2)$ and $(\xi_1,\xi_3)$ correspond to $\gamma(y_1)$ and $\gamma(y_2)$ via the identification of $(2^N)^{S_1}\times (2^N)^{S_2}$ with $(2^N)^N$. Then the conditions in part (2) of Lemma~\ref{lemma: main technical sufficient condition for genericity} are satisfied for $(\zeta,\xi_1,\xi_2,\xi_3)$, where $k=3$, $S = X = N$, $Y_1 = S_1$, $Y_2 = Y_3 = S_2$, for either
\begin{itemize}
    \item $\zeta = \gamma(x)$, assuming $x$ is separated,
    \item $\zeta = \gamma_0(x)$, assuming $x$ is injective.
\end{itemize}
\end{lemma}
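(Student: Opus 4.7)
The plan is to follow the template of Lemma~\ref{lemma: examples of main construction}, systematically adapted to the partition $N=S_1\sqcup S_2$ and the triple $(\xi_1,\xi_2,\xi_3)$, with joint separation of $y_1,y_2$ replacing the individual separation hypothesis. The first step is to record the analogs of the formulas $\gamma(y)(t,k)(s,d)=\alpha(y[t,s],d,k)$: for $(t,k)\in S_1$ one has $\xi_1(t,k)(s,d)=\alpha(y_1[t,s],d,k)=\alpha(y_2[t,s],d,k)$, the two being equal because $t$'s image lies in $y_1\cap y_2$ (so $y_1\circ t=y_2\circ t$); for $(t,k)\in S_2$ we have $\xi_2(t,k)(s,d)=\alpha(y_1[t,s],d,k)$ and $\xi_3(t,k)(s,d)=\alpha(y_2[t,s],d,k)$; and $\zeta(s,d)(t,k)$ is either $\alpha(x[s,t],k,d)$ (when $\zeta=\gamma(x)$) or $\beta_0(x\circ s)(d)(t,k)$ (when $\zeta=\gamma_0(x)$).

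For condition (2)(a), fix $i\in\{1,2,3\}$ and a finite $\tau\colon N\to\{0,1\}$. For $i=1$, joint separation implies that the subsequence of $y_1$ indexed by $y_1\cap y_2$ is separated; as in Lemma~\ref{lemma: examples of main construction}, I would build $s^\ast\in\N^{<\N}$ with image inside $y_1\cap y_2$ such that the tuples $y_1[t_1,s^\ast]$ are pairwise distinct for distinct $t_1$ appearing in $\mathrm{dom}(\tau)$; genericity of $\alpha$ then yields infinitely many $k$ with $(t_1,k)\in S_1$ satisfying $\xi_1(t_1,k)(\ndash)\supseteq\tau$. For $i\in\{2,3\}$ the argument is identical using $y_1$ or $y_2$, except that each chosen $t$ must have image meeting the symmetric difference of $y_1$ and $y_2$ in order to land in $S_2$, which is possible since $y_1\setminus y_2$ and $y_2\setminus y_1$ are non-empty.

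For condition (2)(b), the goal is to find, for each finite $\tau,\tau_1,\tau_2,\tau_3$, infinitely many $(s,d)\in N$ satisfying all four extension constraints simultaneously. I would construct $s\in\N^{<\N}$ via three kinds of contributions: first, indices ensuring that distinct $t$'s in the relevant domains give distinct $y_j[t,s]$ for $j=1,2$, which follows from injectivity of $y_1,y_2$ once $s$'s range covers enough of $\N$; second, indices witnessing that $y_1[t,s]\neq y_2[t,s]$ whenever $(t,k)$ lies in the domain of $\tau_2$ or $\tau_3$ (i.e.\ in $S_2$), realized by including in $s$ some coordinate $i_0$ in the image of $t$ with $y_1(i_0)\neq y_2(i_0)$, together with a separating bit for those two reals supplied by joint separation; third, the $\zeta$-side separations, handled exactly as in Lemma~\ref{lemma: examples of main construction} when $\zeta=\gamma(x)$ with $x$ separated, and as in Lemma~\ref{lemma: example of main construction - combination} when $\zeta=\gamma_0(x)$ with $x$ injective. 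Once such an $s$ is in hand, the relevant $\alpha$-inputs are pairwise distinct, so for a generic $\alpha$ (and, in the second case, a generic $\beta_0$) infinitely many values of $d$ simultaneously enforce all four extension constraints, exactly as in the closing paragraph of Lemma~\ref{lemma: examples of main construction}.

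The main obstacle I anticipate is the second contribution above: the need to separate the $\xi_2$-side from the $\xi_3$-side of the equations, even when $y_1$ and $y_2$ share a large common part. The resolution relies on the defining property of $S_2$, namely that every $(t,k)\in S_2$ has $t$ visiting some point outside $y_1\cap y_2$, combined with joint separation (to convert that visit into an actual bit-difference between $y_1[t,s]$ and $y_2[t,s]$). Only finitely many such witnessing coordinates are needed, one per element of the finite domains of $\tau_2$ and $\tau_3$, so a single finite extension of $s$ accommodates all of them without interfering with the other separations; the rest of the argument is then parallel to the proofs already on hand.
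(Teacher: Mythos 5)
Your proposal follows the same approach as the paper's proof and is essentially correct, but there is a confusion about where joint separation actually enters that is worth sorting out. Joint separation is the essential new hypothesis in condition (2)(a) with $i=1$: the finite sequence you build, say $t^\ast$, must simultaneously have image inside the common part of $y_1$ and $y_2$ (so that $(t^\ast,k)\in S_1$) and separate the finitely many $s$-coordinates appearing in $\dom(\tau)$; joint separation is exactly what lets you take those separating rows from $y_1\cap y_2$. By contrast, in (2)(b) the inequality $y_1[t,s]\neq y_2[t,s]$ for $(t,k)\in S_2$ requires no joint separation at all: membership in $S_2$ gives a row $i_0$ in the image of $t$ with $y_1(i_0)\neq y_2(i_0)$ as distinct reals, so some column $m$ has $y_1(i_0)(m)\neq y_2(i_0)(m)$, and it is that column $m$ (not the row $i_0$) that you add to $s$. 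Relatedly, the paper's proof spells out only (2)(a), explicitly calling it ``the new aspect'' and treating (2)(b) as routine, so your assessment that (2)(b) is the main obstacle misplaces the novelty, though the (2)(b) details you supply are a reasonable supplement. One further small slip: in your (2)(a) sketch, with $\xi_1(t,k)(s,d)=\alpha(y_1[t,s],d,k)$ and $(t,k)\in S_1$ being what is chosen, the built sequence with image in $y_1\cap y_2$ occupies the $t$-slot, so the tuples that must be pairwise distinct are $y_1[t^\ast,s_a]$ as $(s_a,d_a)$ ranges over $\dom(\tau)$, not $y_1[t_1,s^\ast]$.
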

\begin{proof}
The new aspect here is in part (2)(a) of Lemma~\ref{lemma: main technical sufficient condition for genericity}. Fix a finite partial function $\tau\colon N\to \{0,1\}$. For each $i=1,2,3$, we need to find infinitely many $(s,d)\in Y_i$ so that $\xi_i(s,d)(-)$ extends $\tau$. This is analogous to the arguments about $\gamma(x)$ in the proof of Lemma~\ref{lemma: examples of main construction}, just that now we must find $(s,d)\in S_1$, when $i=1$, and $(s,d)\in S_2$, when $i=2,3$.

By assumption, given $t_1\neq t_2\in \N^{<\N}$, there is $i\in y_1\cap y_2$ so that $y_1\circ t_1(i) = y_2 \circ t_1(i) \neq y_2 \circ t_2(i) = y_1\circ t_2(i)$. We can therefore find $s^\ast\in \N^{<\N}$ whose image is in $y_1\cap y_2$, so that for any $s$ which contains $s^\ast$ and any distinct $t_1,t_2$ in the domain of $\tau$, $y_1[s,t_1]\neq y_1[s,t_2]$ and $y_2[s,t_1]\neq y_2[s,t_2]$. (Using the notation from the proof of Lemma~\ref{lemma: examples of main construction}.)

Now for any $d\in\N$, $(s^\ast,d)\in S_1$. As before, for a generic $\alpha$, there are infinitely many $d$ for which $\xi_1(s^\ast,d)(-)=\gamma(y_1)(s^\ast,d)(-)$ extends $\tau$. Next, fix some $s$ extending $s^\ast$ so that the image of $s$ is not contained in $y_1\cap y_2$. Then $(s,d)\in S_2$ for any $d\in\N$. Again we may find infinitely many $d$ for which $\xi_2(s,d)(-)=\gamma(y_1)(s,d)(-)$ extends $\tau$, and infinitely many $d$ for which $\xi_3(s,d)(-)=\gamma(y_2)(s,d)(-)$ extends $\tau$.
\end{proof}

We remark that for $k=0$ the following variation of Lemma~\ref{lemma: examples of main construction n=1 case} is used.
\begin{lemma}        
Suppose $x,y,z \in (2^\N)^\N$ are injective enumerations of pairwise disjoint subsets of $2^\N$. Then for any comeager $C\subset ((2^N)^N)^3$, for a generic $\beta_0$, the triplet $(\gamma_0(x),\gamma_0(y),\gamma_0(z))\in (2^N)^N\times (2^N)^N \times (2^N)^N$ satisfies the assumption in Lemma~\ref{lemma: main technical sufficient condition for genericity n=1 case}, with $X=N$, $Y_1 = Y_2 = Y_3 = N$, and $M_1 = M_2 = M_3 = \N$.
\end{lemma}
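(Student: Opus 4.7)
The plan is to carry out the proof of Lemma~\ref{lemma: examples of main construction n=1 case} with $x$ replaced by the triple $(x,y,z)$, exploiting that pairwise disjointness of the images keeps $x\circ t$, $y\circ t'$, $z\circ t''$ pairwise distinct in $(2^\N)^{<\N}$ whenever $t,t',t''$ are nonempty. This will allow the joint genericity of $\beta_0 = \alpha_0 \circ \iota$ to act on all three coordinates simultaneously. The only serious bookkeeping issue is the empty sequence in $\N^{<\N}$.

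First I would apply Lemma~\ref{lemma: main technical sufficient condition for genericity n=1 case} in its comeager form to extract from $C$ a comeager $D' \subset ((2^N)^\N)^3$, reducing the goal to the following property of $\zeta = (\gamma_0(x), \gamma_0(y), \gamma_0(z))$: for any finite partial injections $\tau_i\colon \N\to N$, $i=1,2,3$, there are total injective extensions $\sigma_i\colon \N\to N$ such that $(\gamma_0(x)\circ\sigma_1, \gamma_0(y)\circ\sigma_2, \gamma_0(z)\circ\sigma_3)\in D'$. Next, fix a bijection $e\colon\N\to\N^{<\N}$, set $N_n = \set{e(i)}{i<n}$, and fix bijections $s_n\colon\N\to N_n\times\N$ with induced homeomorphisms $\hat s_n\colon (2^N)^{N_n\times\N}\to (2^N)^\N$. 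For each $n$, let
\begin{equation*}
U_n \subset ((2^N)^{N_n\times\N})^3
\end{equation*}
be the intersection, over all triples $(s^{(1)}, s^{(2)}, s^{(3)})$ of bijections $\N\to N_n\times\N$ each agreeing with $s_n$ on cofinitely many values, of the preimages of $D'$ under $\hat{s^{(1)}}\times\hat{s^{(2)}}\times\hat{s^{(3)}}$; this is a countable intersection of comeager sets, hence comeager. Using $e$ and $s_n$, identify $U_n$ with a comeager subset $\tilde U_n \subset ((2^N)^\N)^{3n}$.

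Then I would choose the comeager sets $C_m \subset ((2^N)^\N)^m$ used to build $\alpha_0$ so that $C_{3n}\subset\tilde U_n$ for every $n$, while also subsuming the requirements of Lemma~\ref{lemma: examples of main construction n=1 case}. The transversal theorem produces a continuous $\alpha_0\colon 2^\N\to (2^N)^\N$ with $(\alpha_0(v_1),\dots,\alpha_0(v_m))\in C_m$ for any pairwise distinct $v_1,\dots,v_m\in 2^\N$, and I set $\beta_0 = \alpha_0\circ\iota$. Given finite partial injections $\tau_1,\tau_2,\tau_3$, pick $n$ large enough that all images lie in $N_n\times\N$, and bijections $\sigma_i\colon\N\to N_n\times\N$ extending $\tau_i$ and agreeing with $s_n$ cofinitely. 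Under the identifications, the triple $(\gamma_0(x)\circ\sigma_1, \gamma_0(y)\circ\sigma_2, \gamma_0(z)\circ\sigma_3)$ becomes the $3n$-tuple
\begin{equation*}
\seqq{\beta_0(x\circ e(i)),\, \beta_0(y\circ e(i)),\, \beta_0(z\circ e(i))}{i<n}.
\end{equation*}
Since $x,y,z$ are injective with pairwise disjoint images, these $3n$ sequences in $(2^\N)^{<\N}$ are pairwise distinct, so the tuple lies in $C_{3n}\subset \tilde U_n$, which forces $(\gamma_0(x)\circ\sigma_1,\gamma_0(y)\circ\sigma_2,\gamma_0(z)\circ\sigma_3)\in D'$, as required.

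The hard part, more a technical nuisance than a genuine obstacle, is the case $e(i_0) = \emptyset$, where $x\circ e(i_0) = y\circ e(i_0) = z\circ e(i_0) = \emptyset$ forces one triple coincidence in the $3n$-tuple. This can be handled either by arranging $e$ so $\emptyset$ lies outside the relevant initial segment (which works so long as $n$ is chosen past the position of $\emptyset$ in $e$), or by modifying $C_m$ to permit this single prescribed triple-repetition; either way the transversal construction of $\alpha_0$ is unaffected, and the three-sequence analogue of Lemma~\ref{lemma: examples of main construction n=1 case} goes through.
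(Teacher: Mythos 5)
Your overall plan — feed the proof of Lemma~\ref{lemma: examples of main construction n=1 case} the triple $(x,y,z)$, exploit that pairwise disjointness keeps $x\circ t$, $y\circ t'$, $z\circ t''$ pairwise distinct for nonempty $t,t',t''$, and let the transversal construction of $\alpha_0$ do the rest — is the natural one, and you correctly flag the only place it breaks: the coincidence $x\circ\emptyset=y\circ\emptyset=z\circ\emptyset=\emptyset$, which forces a triple repetition in the $3n$-tuple you feed to $\alpha_0$. But this is not the technical nuisance you describe; it is a genuine obstruction that neither of your fixes resolves. Fix 1 (keeping $\emptyset$ out of $N_n$) cannot work once some $\tau_i$ has a point of the form $(\emptyset,k)$ in its image: then every $N_n\times\N$ large enough to support a bijection $\sigma_i$ extending $\tau_i$ must contain $\emptyset$. (And as literally written, ``$n$ chosen past the position of $\emptyset$ in $e$'' is the \emph{problematic} direction, not the safe one.) Fix 2 is the one you actually need, but it is not merely a matter of shrinking the $C_m$'s: you would need $\phi^{-1}(\tilde U_n)$ to be comeager, where $\phi\colon((2^N)^\N)^{3n-2}\to((2^N)^\N)^{3n}$ duplicates one coordinate twice. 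That map lands inside a closed nowhere dense ``diagonal'' of the target, so comeagerness of $\tilde U_n$ tells you nothing about $\phi^{-1}(\tilde U_n)$; Kuratowski--Ulam does not help along a meager slice.

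In fact the obstruction is sharper than a missing technicality. Since $\gamma_0(x)(\emptyset,k)=\beta_0(\emptyset)(k)$ is independent of $x$, the three coordinates of $(\gamma_0(x),\gamma_0(y),\gamma_0(z))$ are forced to agree on the entire $\emptyset$-column, and one can cook up a dense open $C\subset((2^N)^N)^3$ (e.g.\ $\{(a_1,a_2,a_3):a_1(\emptyset,0)\neq a_2(\emptyset,0)\}$) and $\tau_1,\tau_2$ hitting $(\emptyset,0)$ in the coordinates that map to it, so that \emph{no} extensions $\alpha_i$ can push $(\gamma_0(x)\circ\alpha_1,\gamma_0(y)\circ\alpha_2,\gamma_0(z)\circ\alpha_3)$ into the corresponding $D'$. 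So the hypothesis of Lemma~\ref{lemma: main technical sufficient condition for genericity n=1 case} can literally fail for this $\zeta$ when $C$ is arbitrary; the assertion is only salvageable by using more structure — for instance, that in the actual application $C$ comes from an $F_1$-invariant equivalence relation and is itself invariant under the $\mathrm{Sym}(N)^3$-action, so that the offending ``bad'' $\tau_i$ occupy a meager set of finite partial permutations and the conclusion of the lemma ($G$ comeager) can be recovered directly rather than via its stated hypothesis. You need to engage with this invariance or excise the $\emptyset$-column from the combinatorics; as written the proof has a real gap.
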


\subsection{Proof of Lemma~\ref{lemma: main proof breakdown of E}}

First we recall some background.

\subsubsection{$E$-Pins}\label{subsub: E pins}

Let $E$ be an analytic equivalence relation on a Polish space $X$. Assume that $\P$ is a forcing poset and $\tau$ is a $\P$-name which is forced to be a member of the Polish space $X$, as interpreted in the generic extension. The pair $(\P,\tau)$ is an $\mathbf{E}$\textbf{-pin} if 
\begin{equation*}
    \P\times \P \force \tau_l \mathrel{E} \tau_r,
\end{equation*}
where $\tau_l, \tau_r$ are the interpretation of $\tau$ according to the left and right generics respectively.
\begin{lemma}[see{\cite[Proposition 2.1.2]{Larson-Zapletal-Geometric-2020}}]
For $E$, $\P$, $\tau$ as above, $(\P,\tau)$ is an $E$-pin if and only if in any extension of $V$, given two filters $G_1,G_2$ which are separately $\P$-generic over $V$, $\tau[G_1] \mathrel{E} \tau[G_2]$.
\end{lemma}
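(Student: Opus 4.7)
The plan is to prove both implications, with the nontrivial content concentrated in the forward direction. The backward direction is essentially immediate: assuming that in every extension of $V$ any two separately $\P$-generic filters produce $E$-related interpretations of $\tau$, I would apply this hypothesis to a $(\P\times\P)$-generic filter $H=G_l\times G_r$ over $V$, whose two projections are in particular each $\P$-generic over $V$. This yields $\tau_l[H]\mathrel{E}\tau_r[H]$ in $V[H]$, and by the forcing theorem $\P\times\P\force\tau_l\mathrel{E}\tau_r$, so $(\P,\tau)$ is an $E$-pin.

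For the forward direction, fix an extension $W\supseteq V$ and filters $G_1,G_2\in W$ each separately $\P$-generic over $V$; the goal is to deduce $\tau[G_1]\mathrel{E}\tau[G_2]$ in $W$. The key idea is to produce, in some further extension $W'\supseteq W$, a single mediating filter $H\subseteq \P$ which is $\P$-generic over both $V[G_1]$ and $V[G_2]$ simultaneously. Such an $H$ can be arranged by passing to a collapse extension $W'=W[K]$, where $K$ collapses $(2^{|\P|})^+$ to $\omega$; then both $V[G_1]$ and $V[G_2]$ are countable in $W'$ and contain only countably many dense subsets of $\P$, so by a standard diagonal construction one obtains a filter $H$ meeting every dense subset of $\P$ lying in $V[G_1]\cup V[G_2]$.

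Once $H$ is available in $W'$, the product forcing lemma tells us that each of $(G_1,H)$ and $(G_2,H)$ is a $(\P\times\P)$-generic filter over $V$. The $E$-pin hypothesis therefore gives $\tau[G_1]\mathrel{E}\tau[H]$ and $\tau[G_2]\mathrel{E}\tau[H]$, and transitivity of $E$ yields $\tau[G_1]\mathrel{E}\tau[G_2]$ in $W'$. Finally, since $E$ is analytic the statement $\tau[G_1]\mathrel{E}\tau[G_2]$ is $\mathbf{\Sigma}^1_1$ in parameters that already lie in $V$, so by Mostowski absoluteness it reflects back down from $W'$ to $W$, as required. The one genuinely nontrivial step — and the only place where care is needed — is the construction of the mediating filter $H$ via a collapse of $\P$; everything else (the product forcing lemma, transitivity, and analytic absoluteness) is routine.
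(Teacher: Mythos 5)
The paper does not give its own proof of this lemma; it cites \cite[Proposition 2.1.2]{Larson-Zapletal-Geometric-2020} and moves on. Your argument is the standard one for this fact, and it is correct. The backward direction is as routine as you say. For the forward direction, the mediating-filter trick is exactly the right idea: pass to a collapse extension $W'$ of $W$ in which all dense subsets of $\P$ lying in $V[G_1]\cup V[G_2]$ become countable, run the diagonal construction to get a filter $H$ that is $\P$-generic over both $V[G_1]$ and $V[G_2]$, and apply the product lemma to see that $G_1\times H$ and $G_2\times H$ are each $\P\times\P$-generic over $V$. The forcing theorem then gives $\tau[G_1]\mathrel{E}\tau[H]$ in $V[G_1][H]$ and $\tau[G_2]\mathrel{E}\tau[H]$ in $V[G_2][H]$; both models sit inside $W'$, so by upward $\mathbf{\Sigma}^1_1$ absoluteness these hold in $W'$, transitivity gives $\tau[G_1]\mathrel{E}\tau[G_2]$ there, and Mostowski absoluteness (both directions hold for $\mathbf{\Sigma}^1_1$ between transitive models, since well-foundedness is absolute via rank functions) pulls it back down to $W$. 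One cosmetic point: $V[G_i]$ is a proper class, so what you actually want to make countable in $W'$ is the set of dense (or predense) subsets of $\P$ that lie in $V[G_i]$, which is bounded in size by $(2^{|\P|})^W$; the collapse you chose does the job. Everything you flagged as routine really is routine here.
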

The reader is referred to \cite[Chapter~2]{Larson-Zapletal-Geometric-2020} for more on pins. Below we will use specifically-designed pins in certain symmetric ZF models, following~\cite{Sha20}.

\subsubsection{Symmetric models}



Let $\P_n$ be Cohen forcing for producing a generic member of $((2^\N)^\N)^n$. This can be defined as the poset of all Borel sets up to inclusion mod meager (see~\cite{Zapletal-idealized-2008}). An important fact we will use is that given a sufficiently large countable model $M$, the set of $x\in ((2^\N)^\N)^n$ which are $\P_n$-generic over $M$ is comeager.
We will consider the equivalent combinatorial presentation of $\P_n$ as the poset of all finite approximations, ordered by extensions.

Let $G\subset \P_n$ be generic over $V$ and $x\in ((2^\N)^\N)^n$ in $V[G]$ be the corresponding generic member. Note that $x \in X_n$, since $X_n \subset ((2^\N)^\N)^n$ is comeager. Let
\begin{equation*}
    A_k = A^x_k,
\end{equation*}
for $k\leq n$, as in Section~\ref{Section: def of Fn into}. Consider the models $V(A_n)$, the minimal extension of $V$ which contains $A_n$ and satisfies $ZF$. Such models were studied by Monro~\cite{Mon73}. Their relationship to the Friedman-Stanley jumps was introduced in~\cite{Sha20}. 

\begin{remark}
For $k<n$, the poset $\P_n$ can be naturally presented as a product $\P_k \times \P^k_n$, where $\P^k_n$ adds a member of $((2^\N)^\N)^{n\setminus k}$ by finite approximations.
\end{remark}

\begin{fact}\label{fact: An viewed as pin}
    There is a poset $\Q_n$ in $V(A_n)$, definable from $A_n$ over $V$, and a $\Q_n$-name $\sigma_n$ for a member of $X_n$, so that it is forced that $A_n^{\sigma_n} = A_n$. 
\end{fact}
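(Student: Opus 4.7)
The plan is to work in $V(A_n)$ and define $\Q_n$ as the poset of finite approximations to members $x\in X_n$, constrained to be consistent with the hereditary structure of $A_n$. A condition is a tuple $p=(p_0,p_1,\dots,p_{n-1})$ of finite partial functions, where $p_0$ is a finite partial function from $\N$ to $2^{<\N}$ such that each $p_0(l)$ extends to some real in $A_1$; and for $1\leq k<n$, $p_k$ is a finite partial function assigning to each index $l$ in its domain a finite characteristic function on $\dom(p_{k-1})$, such that the ``partial subset'' it determines extends to some element of $A_{k+1}$. The ordering is by componentwise extension. Both the underlying set and the ordering are visibly definable from $A_n$ as a parameter, since $A_1,\dots,A_{n-1}$ are recoverable from $A_n$ as iterated unions inside $V(A_n)$.

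Let $\sigma_n$ be the canonical $\Q_n$-name for the union of the generic filter, regarded as an element of $((2^\N)^\N)^n$. The first step is to verify that $\sigma_n$ is forced to lie in $X_n$: the three conditions of Definition~\ref{defn : domain of Fn} reduce to density statements in $\Q_n$ (arbitrarily long initial segments get committed; distinct committed reals can be separated on a new coordinate; and the coherence condition~(3) holds automatically because the ``partial subset'' constraints refer to elements, not to their enumerations). The inclusion $A^{\sigma_n}_k\subseteq A_k$ is essentially built into the definition: every real enumerated as $\sigma_n(0)(l)$ is a limit of finite strings each of which extends to a member of $A_1$, hence itself lies in $A_1$, and the analogous argument propagates to higher levels using that each $p_k(l)$ is required to extend to a member of $A_{k+1}$.

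The main content is the reverse inclusion $A_k\subseteq A^{\sigma_n}_k$, which I would establish by density. Given $a\in A_1$, the set of $p\in\Q_n$ such that some $p_0(l)$ extends $a\restriction N$, for any prescribed $N$, is dense (one simply introduces a fresh index $l\notin\dom(p_0)$). Given $b\in A_{k+1}$, one extends $p$ in stages: first enlarge $p_0,\dots,p_{k-1}$ so that the enumerated portion of $A_k$ already distinguishes $b$ from sufficiently many other members of $A_{k+1}$, then add a new index $l$ to $\dom(p_k)$ with $p_k(l)$ beginning to carve $b$ out of the enumerated elements of $A_k$. Iterating, every element of $A_k$ is forced to appear in the generic enumeration.

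The main obstacle is verifying that these density extensions can in fact be carried out simultaneously at all levels while staying inside $\Q_n$: every newly committed value at level $j$ must still extend to some member of $A_{j+1}$, and the combinatorial bookkeeping must respect the recursive structure of the $A_k$'s. This is where one genuinely uses that $A_n\in V(A_n)$ and that the hereditary structure $(A_1,\dots,A_n)$ is closed under the recursive definition from Section~\ref{Section: def of Fn into}; in effect, the forcing $\Q_n$ is a hereditary Cohen-style product over $A_n$, whose genericity arguments are absolute between $V(A_n)$ and any larger model in which the transitive closure of $A_n$ is still countable.
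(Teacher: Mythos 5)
There is a genuine gap, and it is precisely in the step you label ``essentially built into the definition.'' Your poset constrains $p_0(l)\in 2^{<\N}$ only to \emph{extend to some} real in $A_1$. But $A_1=\set{x(0)(k)}{k\in\N}$ is the range of a Cohen-generic sequence $x(0)\in(2^\N)^\N$, so $A_1$ is dense in $2^\N$: every finite binary string extends to a member of $A_1$. The constraint on $p_0$ is therefore vacuous, the $p_0$-coordinate of your poset is just Cohen forcing with finite partial functions $\N\to 2^{<\N}$, and each $\sigma_n(0)(l)$ comes out Cohen-generic over $V(A_n)$, hence \emph{not} in $A_1$. The assertion that a real which is a limit of finite strings, each extendible within $A_1$, must itself lie in $A_1$ fails exactly when $A_1$ has no isolated points, which is the situation here. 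The same density of $A_{k+1}$ washes out the constraint at every higher level, so $\sigma_n$ is just a fresh Cohen generic and $A^{\sigma_n}_n\neq A_n$. Your density argument in the other direction also does not quite close: for each $a\in A_1$ and each $N$ it produces some index $l_N$ with $\sigma_n(0)(l_N)\supseteq a\restriction N$, but these indices vary with $N$, and nothing in the poset forces a single $\sigma_n(0)(l)$ to \emph{equal} $a$.

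The paper sidesteps all of this by making conditions commit to actual elements of the $A_k$, not to finite binary strings: $\Q_n$ is the poset of finite partial injections from $\N$ into the transitive closure of $A_n$, so the generic adds genuine enumerations $e_k\colon\N\to A_k$. Once the enumerations are present, the abstractly given sequence $\seqq{A_k}{1\leq k\leq n}$, viewed as an element of $(2^\N)^{A_1}\times(2^{A_1})^{A_2}\times\cdots$, is translated \emph{deterministically} to a member $\sigma_n$ of $((2^\N)^\N)^n$ via the change-of-base-set identification of Section~\ref{section: appendix}; then $A^{\sigma_n}_n=A_n$ holds by construction, with no density argument needed. Your proposal can be repaired by replacing the codomain $2^{<\N}$ of $p_0$ with $A_1$ itself (and analogously at the higher levels), at which point it essentially coincides with the paper's poset.
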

\begin{proof}
Take $\Q_n$ to be the poset to add, by finite approximations, a countable enumerations of the hereditary closure of $A_n$. The sequence $\seqq{A_k}{k<n}$ may be viewed as a member of the space $\prod_{k<n} (2^{A_k})^{A_{k+1}} = (2^\N)^{A_1} \times (2^{A_1})^{A_2} \times (2^{A_2})^{A_3} \times \dots$. After forcing with $\Q_n$, given enumerations of the sets $A_1, A_2, \dots$, we may naturally translate this to a member of $((2^\N)^\N)^n$, as in Section~\ref{section: appendix}, and this will be our $\sigma_n$.    
\end{proof}

\begin{remark}
$(\Q_n,\sigma_n)$ is an $F_n$-pin in the model $V(A_n)$. Given an equivalence relation $E$ which extends $F_n$, then $(\Q_n,\sigma_n)$ is an $E$-pin as well.     
\end{remark}

\begin{defn}
Let $\R^k_n$ to be the product $\Q_k \times \P^k_n$, and let $\rho^k_n$ be an $\R^k_n$-name for the member of $((2^\N)^\N)^n$ whose restriction to $((2^\N)^\N)^k$ is $\sigma_k$, and its restriction to $((2^\N)^\N)^{n\setminus k}$ is added by $\mathbb{P}^k_n$.
\end{defn}
The useful property of $\R^k_n$ is that it allows us to add the set $A_n$ over the model $V(A_k)$ in a sufficiently homogeneous way.
\begin{fact}
There is an $\R^k_n$-generic $R$ over $V(A_k)$ so that $A^{\rho^k_n[R]}_n = A_n$.
\end{fact}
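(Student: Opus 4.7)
Here is my plan.

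The natural approach is to use the original generic $x$ itself to produce the desired $\R^k_n$-generic over $V(A_k)$. Factor $\P_n \cong \P_k \times \P^k_n$ and write $x = (x_k,x^k)$ where $x_k = x\restriction k$. By the product lemma, $x_k$ is $\P_k$-generic over $V$ and $x^k$ is $\P^k_n$-generic over $V[x_k]$. Note also that $A_k = A^{x}_k = A^{x_k}_k$, so $V(A_k) \subseteq V[x_k]$.

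The key step is to show that $x_k$ itself defines a $\Q_k$-generic filter $G_k$ over $V(A_k)$ with $\sigma_k[G_k] = x_k$. Concretely, the conditions of $\Q_k$ are finite partial enumerations of (the layered structure of) the hereditary closure of $A_k$, so $x_k$ naturally induces a filter $G_k \subseteq \Q_k$, and one must verify that $G_k$ meets every dense set $D \in V(A_k)$. Such a $D$ is definable in $V[x_k]$ from $A_k$ together with a parameter in $V$; since $A_k$ is itself definable from $x_k$, the standard homogeneity/genericity argument for symmetric models (cf.\ \cite{Sha20}, which is the blueprint for exactly this situation) reflects any such dense set to a $V$-dense subset of $\P_k$ that $x_k$, being Cohen generic over $V$, must meet. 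This yields $G_k$ as required, and one checks $\sigma_k[G_k] = x_k$ directly from the definition of $\sigma_k$ as the name for the enumeration added by $\Q_k$. Consequently $V(A_k)[G_k] = V[x_k]$, since each side contains the other's defining data.

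It then remains to combine this with $x^k$. Since $V(A_k)[G_k] = V[x_k]$ and $x^k$ is $\P^k_n$-generic over $V[x_k]$, the generic $x^k$ is also $\P^k_n$-generic over $V(A_k)[G_k]$. By the product forcing lemma, $R := (G_k, x^k)$ is $\Q_k \times \P^k_n = \R^k_n$-generic over $V(A_k)$. By construction of $\rho^k_n$, we have $\rho^k_n[R] = (\sigma_k[G_k], x^k) = (x_k, x^k) = x$, so $A^{\rho^k_n[R]}_n = A^x_n = A_n$, as required.

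The main obstacle is the genericity statement in the middle paragraph: that $x_k$ induces a $V(A_k)$-generic filter for $\Q_k$. This is where the symmetric-model machinery enters, and it is precisely the point at which the specific definition of $\Q_k$ (as the poset that enumerates the transitive closure of $A_k$ by finite approximations) matches up with the Cohen-generic nature of $x_k$. Everything else is routine product-forcing bookkeeping.
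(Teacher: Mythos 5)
Your proof is correct and is clearly the intended construction (the paper states this as an unproved Fact): factor $x = (x_k, x^k)$ via $\P_n \cong \P_k \times \P^k_n$, read the $\Q_k$-filter $G_k$ off the enumeration $x_k$, verify $\sigma_k[G_k] = x_k$, and then apply the product lemma, using only the trivial inclusion $V(A_k)[G_k]\subseteq V[x_k]$ to transfer $\P^k_n$-genericity of $x^k$ to the intermediate model. The one genuinely nontrivial point, which you correctly isolate, is the claim that $G_k$ is $\Q_k$-generic over $V(A_k)$; this is the standard genericity analysis for Cohen symmetric extensions carried out in \cite{Sha20} (reflecting a dense $D\in V(A_k)$, definable from $A_k$ and a $V$-parameter, to a $V$-dense set via the homogeneity of $\P_k$ and the full symmetry of the name for $A_k$), and it is worth either citing that explicitly or spelling out the homogeneity argument, since it is the entire content of the Fact.
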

\begin{fact}\label{fact: homogeneity Ak to An factor}
For any two conditions $p,q\in \R^k_n$ there is an automorphism of $\R^k_n$ (in $V(A_k)$) sending $p$ to $q$ and fixing the name for $A^{\rho^k_n}_n$.
\end{fact}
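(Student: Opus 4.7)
The plan is to realize the required automorphism as coming from a finite permutation in the natural action of $(S_\infty)^n$ on $X_n$ via the map $a_n$ from Section~\ref{subsection : orbit ER presentation}, restricted to an automorphism of $\R^k_n$ inside $V(A_k)$. The key conceptual input is that $a_n$ preserves $F_n$-classes on $X_n$, so the $F_n$-invariant $A^x_n$ is preserved; this is exactly the invariance needed for the name $A^{\rho^k_n}_n$.

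First, I would set up the group action. Both factors of $\R^k_n = \Q_k \times \P^k_n$ are Cohen-type posets of finite approximations. Conditions in $\Q_k$ are finite partial approximations to the generic enumerations $e_1,\dots,e_k$ of $A_1,\dots,A_k$ (as in the proof of Fact~\ref{fact: An viewed as pin}); conditions in $\P^k_n$ are finite partial functions specifying bits of a generic element of $((2^\N)^\N)^{n\setminus k}$. A finite permutation $g \in (S_\infty)^n$ acts on these finite approximations by relabeling the countable index sets appearing in them, in the same pattern as $a_n$ acts on $X_n$: the first $k$ coordinates of $g$ act on $\Q_k$ by relabeling the domains of $e_1,\dots,e_k$; the last $n-k$ coordinates act on $\P^k_n$ by permuting the indices of the Cohen-generic levels; and the coordinate at position $k{-}1$ also acts on the "bit positions" of the first Cohen-generic level in $\P^k_n$, matching the coupling in the action $c$ of Section~\ref{subsection : orbit ER presentation}. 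Since each such $g$ has finite support, the induced map on conditions is definable from $g$ over $V(A_k)$ and extends to an order-preserving bijection of the poset, i.e., an automorphism of $\R^k_n$ lying in $V(A_k)$.

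Second, I would verify that these automorphisms fix the name $A^{\rho^k_n}_n$. The interpretation $A^{\rho^k_n[R]}_n$ is computed from $\rho^k_n[R]$ by the formula of Definition~\ref{defn: main definition}, which factors through the $F_n$-class of $\rho^k_n[R]$. By construction, if $\phi_g$ denotes the automorphism associated with $g$, then $\phi_g \cdot \rho^k_n$ is forced to equal $g \cdot_{a_n} \rho^k_n$, and since $a_n$ preserves $F_n$, the two names $A^{\rho^k_n}_n$ and $A^{\phi_g \cdot \rho^k_n}_n$ are forced equal. Thus $\phi_g$ fixes the name as required.

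Third, given conditions $p,q \in \R^k_n$, I would find a finite permutation $g$ with $\phi_g(p) = q$. Because each condition has finite support, only finitely many values in finitely many countable index sets need to be matched, and the transitivity properties of $(S_\infty)^n$-action on finite tuples of $\N$ (combined with the weak homogeneity of the Cohen factors and the freedom to choose appropriate generic bit-patterns) provide the required $g$ by a standard finite combinatorial argument. The main obstacle is this last verification: the subgroup of automorphisms arising from $a_n$-permutations is large but not all of $\mathrm{Aut}(\R^k_n)$, so one must check that it still acts transitively on conditions. This is handled by observing that the coupling across the two factors inherited from $a_n$ is precisely what allows simultaneous matching of $\Q_k$-data and $\P^k_n$-data, and that finitely-supported conditions never require infinitary moves.
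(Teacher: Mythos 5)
Your steps (1) and (2) are the right ingredients: realizing automorphisms as finite $a_n$-permutations, and using $F_n$-invariance of $a_n$ to check that the name $A^{\rho^k_n}_n$ is fixed. The gap is in step (3), and the resolution you sketch does not close it.

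Finite $a_n$-permutations only relabel copies of $\N$; they never alter which elements of $A_1,\dots,A_k$ appear in a $\Q_k$-condition, nor do they flip bits of a $\P^k_n$-condition. A condition in $\Q_k$ specifies finitely many values of the generic enumerations $\N\to A_i$, and the action of $g$ sends such a condition to one with the \emph{same range} in each $A_i$ at relabeled positions. So for $p=\{(0,r)\}$ and $q=\{(0,s)\}$ in $\Q_1$ with $r\neq s$ two elements of $A_1$, every image of $p$ under your action has range $\{r\}$ and can never equal $q$. On the $\P^k_n$ side, the Cohen automorphisms that would change bit values are precisely those that fail to fix $A^{\rho^k_n}_n$, so "weak homogeneity of the Cohen factor" cannot be invoked without undoing what you established in step (2). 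The coupling between the two factors inherited from $a_n$ matters for $F_n$-invariance but is irrelevant to reconciling conditions with different ranges or different bit-patterns, so it does not rescue transitivity.

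What is true, and what the only application of this Fact (the homogeneity argument in the proof of Lemma~\ref{lemma: monro models subset of Mk is in Mkplus1}) actually requires, is \emph{weak} homogeneity: for any $p,q$ there is a finite $a_n$-permutation $g$ with $\phi_g(p)$ compatible with $q$. Your family of automorphisms does achieve this, by the standard move-out-of-the-way argument: choose each $g_i$ to carry the finite support of the relevant coordinate of $p$ to indices disjoint from the corresponding support of $q$, aligning the two partial bijections on the finitely many positions where their ranges already agree; the finiteness of supports makes this possible with finite permutations, and one checks the coupled action still avoids conflicts on the $\P^k_n$ factor. You should argue for this compatibility version rather than full transitivity, and note that it suffices for the homogeneity step.
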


We will use the following property of the models $V(A_n)$ (see~\cite[Lemma~4.5]{Sha20}).
\begin{lemma}\label{lemma: monro models subset of Mk is in Mkplus1}
    If $B\in V(A_n)$ is definable from $A_n$ over $V$, and $B\subset V(A_k)$ for $k<n$, then $B\in V(A_k)$ and is definable from $A_{k}$ over $V$.
\end{lemma}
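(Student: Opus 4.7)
The plan is to use the fact that $A_n$ is added over $V(A_k)$ by the forcing $\R^k_n$, together with the homogeneity in Fact~\ref{fact: homogeneity Ak to An factor}, to show that for each $y \in V(A_k)$ the question ``$y\in B$?'' is decided by the trivial condition of $\R^k_n$, and hence $B$ is already computable inside $V(A_k)$.

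First I would fix a formula $\phi$ and parameters $\bar v \in V$ with $B = \{y : \phi(y, A_n, \bar v)\}$, where $\phi$ is evaluated in $V(A_n)$. Equivalently, by the standard characterization of Monro's $V(A_n)$ as a class definable from $A_n$ over $V$ (for instance as the class of sets hereditarily ordinal-definable from parameters in $V \cup \mathrm{trcl}(\{A_n\})$), this reads as a first-order formula $\tilde\phi(y, A_n, \bar v)$ interpretable in any outer model containing $A_n$. Let $\dot A$ denote the canonical $\R^k_n$-name for $A^{\rho^k_n}_n$, and fix an $\R^k_n$-generic $R$ over $V(A_k)$ with $\dot A[R] = A_n$. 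Then $V(A_n) \subseteq V(A_k)[R]$, and $y \in B$ iff $V(A_k)[R] \models \tilde\phi(y, A_n, \bar v)$.

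Now fix $y \in V(A_k)$, and let $\check y$ be its canonical $\R^k_n$-name. Given any two conditions $q_1, q_2 \in \R^k_n$, Fact~\ref{fact: homogeneity Ak to An factor} produces a $V(A_k)$-automorphism $\pi$ of $\R^k_n$ with $\pi(q_1) = q_2$ which fixes $\dot A$. Since $\pi$ also fixes the check-names $\check y$ and $\check{\bar v}$ (both built from the trivial condition), $q_1 \Vdash_{\R^k_n} \tilde\phi(\check y, \dot A, \check{\bar v})$ iff $q_2 \Vdash_{\R^k_n} \tilde\phi(\check y, \dot A, \check{\bar v})$. Hence either the trivial condition forces $\tilde\phi(\check y, \dot A, \check{\bar v})$ or it forces its negation, and we obtain $y \in B$ iff $1_{\R^k_n} \Vdash_{V(A_k)} \tilde\phi(\check y, \dot A, \check{\bar v})$. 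The right-hand side is a statement internal to $V(A_k)$, definable from $A_k$ because $\R^k_n$ and $\dot A$ are. Therefore $B$ is definable from $A_k$ over $V$, and in particular $B \in V(A_k)$.

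The step I expect to require the most care is justifying that the relativization $\tilde\phi$ of $\phi$ to $V(A_n)$ is a genuine first-order formula in $V(A_k)[R]$, so that the forcing relation applies to it; once one fixes a uniform representation of Monro's model $V(A_n)$ as a definable class in any model containing $A_n$ (as in \cite{Mon73} and \cite[\S4]{Sha20}), this transfer and the homogeneity argument are routine. A secondary subtlety is confirming that the automorphism $\pi$ from Fact~\ref{fact: homogeneity Ak to An factor} extends to $\R^k_n$-names in a way that literally fixes every check-name from $V(A_k)$, which follows automatically from the definition of a forcing automorphism.
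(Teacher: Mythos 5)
Your proof is correct and takes essentially the same approach as the paper: both use the homogeneity of $\R^k_n$ from Fact~\ref{fact: homogeneity Ak to An factor} to show that, for each $y\in V(A_k)$, membership of $y$ in $B$ is decided by the trivial condition, and then define $B$ inside $V(A_k)$ (definably from $A_k$) by this forcing clause. The only cosmetic difference is that the paper packages the $V(A_n)$-definition of $B$ into a single $\R^k_n$-name $\dot B$, whereas you unwind it into a relativized formula $\tilde\phi$; the content is identical.
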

For example, if $r$ is a real in $V(A_1)$ which is definable from $A_1$, then $r\in V$.
\begin{proof}
Let $\dot{B}$ be the $\R^k_n$-name for the set which is defined from $A^{\rho^k_n}_n$ according to the definition of $B$ from $A_n$. It follows from Remark~\ref{fact: homogeneity Ak to An factor} that for $b\in V(A_k)$, if there is some condition in $\R^k_n$ forcing that $\check{b}\in \dot{B}$, then every condition in $\R^k_n$ forces that $\check{b}\in \dot{B}$. We may now define $B$ in $V(A_k)$ as the set of all $b\in V(A_k)$ for which it is forced that $\check{b}\in\dot{B}$.
\end{proof}

\subsubsection{From pins to sets}
Let $E$ be an equivalence relation which is classifiable by countable structures. 
Using the Scott analysis, we have a complete classification of $E$, $x\mapsto B_x$, assigning a hereditarily countable set to each $x$ in the domain of $E$. This map is absolute, so that $B_x$ is always the same set, no matter in which model (containing $x$) we perform the calculation. 

\begin{lemma}\label{lemma: pin to set}
Let $E$ and $x\mapsto B_x$ be as above and assume that $(\Q_n,\sigma_n)$ is an $E$-pin in $V(A_n)$. Then there is a set $B\in V(A_n)$, definable from $A_n$, so that $\Q_n\force \check{B}=B_{\sigma_n}$.
\end{lemma}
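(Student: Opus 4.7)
The plan is to recursively build the set $B$ inside $V(A_n)$ so that, in any $\Q_n$-generic extension $V(A_n)[G]$, it coincides with the invariant $B_{\sigma_n[G]}$. Since the assignment $x\mapsto B_x$ is absolutely defined and $(\Q_n,\sigma_n)$ is an $E$-pin, whenever $G_1,G_2$ are $V(A_n)$-generic filters on $\Q_n$ lying in a common further extension one has $B_{\sigma_n[G_1]}=B_{\sigma_n[G_2]}$; call this common hereditarily countable set $B^\ast$. The task is then to witness $B^\ast$ by a set in $V(A_n)$ definable from $A_n$.

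I would argue by induction on the $\in$-rank $\alpha$ of $B_{\sigma_n}$, an ordinal forced to a single value by the pin property and hence lying in $V(A_n)$. The inductive hypothesis is that the statement of the lemma holds for any pin $(\Q_n,\tau')$ in $V(A_n)$ for a coarser equivalence relation $E'\supseteq E$ classifiable by countable structures whose forced Scott rank is $<\alpha$. Each element $c\in B^\ast$ has rank $<\alpha$; using that $E$ is Borel reducible to an isomorphism relation $\cong_\mathcal{L}$ and that $B_x$ is computed from the Scott analysis of the $\mathcal{L}$-structure associated to $x$, such a $c$ is itself the Scott invariant of a suitable reduct of $\sigma_n[G]$, corresponding to a strictly coarser equivalence relation $E'\supseteq E$ still classifiable by countable structures. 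The pin property passes from $E$ to $E'$, and by the induction hypothesis $c$ is forced to equal a specific set in $V(A_n)$ definable from $A_n$.

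Having established that every element of $B^\ast$ lies in $V(A_n)$ and is definable from $A_n$, I would then define
\[
    B=\{\,b\in V(A_n) : \Q_n \Vdash \check b \in B_{\sigma_n}\,\}.
\]
This set is definable from $A_n$: $\Q_n$ and $\sigma_n$ are so by Fact~\ref{fact: An viewed as pin}, and the forcing relation over $V(A_n)$ is definable from $\Q_n$. The containment $B^\ast\subseteq B$ follows from the preceding paragraph, while $B\subseteq B^\ast$ is immediate from the definition of $\Vdash$ together with the pin property. Therefore $\Q_n\Vdash \check B = B_{\sigma_n}$, as required.

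The hard step is the inductive one, specifically producing, for each element of $B^\ast$, a sub-pin at strictly smaller rank. This is the sole place where classifiability of $E$ by countable structures enters; it supplies the level-by-level structure of Scott invariants, so that each element at rank $\beta<\alpha$ is itself a complete invariant of a coarser equivalence relation arising naturally from $E$, and the pin property of $(\Q_n,\sigma_n)$ transfers along this decomposition. A careful write-up would fix a Borel reduction of $E$ to $\cong_\mathcal{L}$ and use the inductive structure of Scott sentences to extract, from any $\Q_n$-name for an element of $B_{\sigma_n}$, a coarser classifiable-by-countable-structures equivalence relation whose pin invariant has strictly smaller Scott rank.
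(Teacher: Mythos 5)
Your overall skeleton is the same as the paper's: the pin property plus the absoluteness of $x\mapsto B_x$ force a single value $B^\ast=B_{\sigma_n[G]}$ independent of the generic, and one then checks that $B^\ast$ lies in $V(A_n)$ and is definable there from $A_n$ as the unique set forced to equal $B_{\sigma_n}$. The problem is the mechanism you propose for the step you yourself identify as the crux, namely that every element of $B^\ast$ lies in $V(A_n)$. You claim that each $c\in B^\ast$ is itself the pinned invariant of a coarser classifiable equivalence relation $E'\supseteq E$ arising from a ``reduct,'' and conclude by induction that $c$ is forced to equal a set \emph{definable} from $A_n$. Neither half of this survives scrutiny. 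An individual element of the hereditarily countable invariant $B_x$ is in general not canonically attached to $x$ at all: in the Scott analysis the elements of $B_x$ come from arbitrary, non-canonical choices of tuples in the classifying structure, and $B_x$ may simply be an infinite set of reals no single one of which is determined by the $E$-class of $x$. So there is no coarser $E'$ whose complete invariant is $c$, the pin property has nothing to transfer to, and your inductive hypothesis is false as stated: the elements of $B^\ast$ do lie in $V(A_n)$, but they need not be definable from $A_n$ --- only the whole set $B^\ast$ is.

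What actually closes the gap is much softer and uses classifiability only through the absoluteness of $x\mapsto B_x$: take $G_1,G_2$ mutually $\Q_n$-generic over $V(A_n)$; then $B^\ast=B_{\sigma_n[G_1]}=B_{\sigma_n[G_2]}\in V(A_n)[G_1]\cap V(A_n)[G_2]=V(A_n)$ by the standard intersection lemma for mutually generic extensions (equivalently, an $\in$-induction on names that claims only membership in $V(A_n)$, not definability, of the elements). With that in hand, your displayed definition of $B$ is fine, and the definability of $B$ from $A_n$ over $V$ follows because $\Q_n$ and $\sigma_n$ are definable from $A_n$ and $B$ is the unique set forced to equal $B_{\sigma_n}$. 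This is exactly the paper's short argument; the rank induction through coarser equivalence relations should be dropped rather than repaired.
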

\begin{proof}
For any two $\Q_n$-generics $G_1,G_2$ over $V(A_n)$, $B_{\sigma_n[G_1]} = B_{\sigma_n[G_2]}$. So the set $B = B_{\sigma[G_1]}$ is in $V(A_n)$, definable as the unique set which is forced to be equal to $B_{\sigma_n}$. Note that $B$, just like $A_n$, is likely not hereditarily countable in $V(A_n)$.
\end{proof}

\begin{lemma}\label{lemma: Ek from set in Ak}
Let $B_n\in V(A_n)$ be a set which is definable from $A_n$ over $V$. 
Let $M$ be a sufficiently large countable substructure, $C_n\subset ((2^\N)^\N)^n$ the set of all $\P_n$-generics over $M$. Note that $C_n$ is comeager and $C_n\subset X_n$. Define $E_n$ on $C_n$ by
    \begin{equation*}
        x\mathrel{E_n}y \iff B_n^x = B_n^y,
    \end{equation*}
    where $B^y_n$ is the set defined in $M(A_n^y)$ from $A_n^y$ according to the definition of $B_n$ from $A_n$.
    Then $E_n$ extends $F_n$ on $C_n$. Moreover, for $k<n$, $E_n$ extends $F_k$ on a comeager set if and only if $B_n\in V(A_k)$ is definable from $A_k$ over $V$.
    \end{lemma}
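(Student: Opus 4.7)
I would first dispose of the two easy directions. For $E_n$ extending $F_n$ on $C_n$: $B_n^x$ is obtained by applying the fixed definition formula $\phi$ of $B_n$ from $A_n$ to $A_n^x$ inside $M(A_n^x)$, so by absoluteness of $\phi$ between $M(A_n^x)$ and $M(A_n^y)$, whenever $A_n^x = A_n^y$ we have $B_n^x = B_n^y$. The $(\Leftarrow)$ direction of the ``moreover'' is the same argument one level down: a formula $\psi$ defining $B_n$ from $A_k$ in $V(A_k)$ yields $B_n^x = \psi^{M(A_k^x)}(A_k^x)$, depending only on $A_k^x$, so $x \mathrel{F_k} y$ forces $x \mathrel{E_n} y$ on all of $C_n$.

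The $(\Rightarrow)$ direction is the heart of the lemma and is where symmetric-model techniques come in. Assume $E_n$ extends $F_k$ on a comeager $D \subset C_n$. My plan is to show that inside $V(A_k)$ the pair $(\R^k_n, \phi(A_n^{\rho^k_n}))$ is an ``equality pin'': any two mutually $\R^k_n$-generic filters $R_1, R_2$ over $V(A_k)$, taken in a common outer extension, satisfy $\phi(A_n^{\rho^k_n[R_1]}) = \phi(A_n^{\rho^k_n[R_2]})$. Once this is in hand, the homogeneity of $\R^k_n$ (Fact~\ref{fact: homogeneity Ak to An factor}) lets one pin the interpretation down to a single set $B^\star \in V(A_k)$ definable from $A_k$ over $V$ via the clause ``$b \in B^\star$ iff $\R^k_n \force \check b \in \phi(A_n^{\rho^k_n})$'', by exactly the same mechanism that drives the proof of Lemma~\ref{lemma: monro models subset of Mk is in Mkplus1}. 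Applying the pin to the distinguished generic $R^\star$ with $A_n^{\rho^k_n[R^\star]} = A_n$ then yields $B^\star = \phi(A_n) = B_n$, as desired.

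To verify the pin condition, set $y_i := \rho^k_n[R_i]$. Both satisfy $A_k^{y_i} = A_k$, hence $y_1 \mathrel{F_k} y_2$; if additionally $y_1, y_2 \in D$, the hypothesis delivers $B_n^{y_1} = B_n^{y_2}$. The main obstacle is forcing $y_i \in D$: the $y_i$ are not full $\P_n$-generics over $M$, since their first $k$ coordinates are constrained to enumerate the specific set $A_k$, so one cannot directly invoke ``Cohen generics avoid meager Borel sets coded in the ground model''. I plan to overcome this via a Kuratowski--Ulam slicing of $((2^\N)^\N)^n = ((2^\N)^\N)^k \times ((2^\N)^\N)^{n\setminus k}$: after enlarging $M$ to absorb the codes defining $D$, comeagerness produces a comeager set of $x_0 \in ((2^\N)^\N)^k$ with fiber $D_{x_0}$ comeager in the tail factor; the $\Q_k$-part of an $\R^k_n$-generic gives a Cohen-generic enumeration of $A_k$ lying in this comeager set, and the $\P^k_n$-generic tail of $y_i$ over $V(A_k)[x_0]$ then lands in $D_{x_0}$ by standard Cohen genericity. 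Homogeneity of $\R^k_n$ lets one arrange both $y_1$ and $y_2$ this way in a single common extension, completing the pin and hence the lemma.
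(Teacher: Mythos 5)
Your proof follows essentially the same route as the paper's: the two easy directions are handled by absoluteness of the defining formula exactly as in the paper, and for $(\Rightarrow)$ you reformulate ``$E_n$ extends $F_k$'' as a pin condition for $(\R^k_n, B_n^{\rho^k_n})$ over $V(A_k)$, then use Fact~\ref{fact: homogeneity Ak to An factor} to extract $B_n \in V(A_k)$ definable from $A_k$ -- this is precisely the paper's ``$B_n$ can be defined in $V(A_k)$ as the unique set forced to equal $B_n^{\rho^k_n}$.''

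One small observation: the Kuratowski--Ulam slicing you introduce to get $\rho^k_n[R_i]$ into the comeager set $D$ is more elaborate than necessary, and in fact relies on the very fact it is trying to avoid. You correctly note that the $\Q_k$-part of an $\R^k_n$-generic yields a $\P_k$-generic enumeration of $A_k$ over $M$ (the constraint ``enumerates $A_k$'' is invisible from $V$ since $A_k \notin V$). But once that is granted, the product lemma immediately gives that $\rho^k_n[R]$ is full $\P_n$-generic over $M$, hence lies in $D$ -- no fiber-by-fiber slicing needed. The paper leaves this genericity fact implicit in the step ``it follows that for any two $\R^k_n$-generics $R_1, R_2$ over $V(A_k)$, $B_n^{\rho^k_n[R_1]} = B_n^{\rho^k_n[R_2]}$,'' and you are right to flag it as needing justification, but the justification is simply the factorization of $\P_n$ through $V(A_k)$, not a Kuratowski--Ulam argument. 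One further point worth making explicit (which the paper also compresses): to conclude that the common value $B_n^{\rho^k_n[R]}$ lies in $V(A_k)$ at all -- before defining $B^\star$ pointwise by $b \in B^\star \iff \R^k_n \force \check{b} \in B_n^{\rho^k_n}$, which presupposes the elements of $B_n$ are checks over $V(A_k)$ -- one should first appeal to a mutual-generics argument ($V(A_k)[R_1] \cap V(A_k)[R_2] = V(A_k)$); your pointwise definition then recovers the same set.
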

\begin{proof}
If $x\mathrel{F_n}y$ then $A^y_n = A^x_n$ and so $B^x_n = B^y_n$. Therefore $E_n$ extends $F_n$ on $C_n$.

Assume that $B_n\in V(A_k)$ and is definable from $A_k$ over $V$. For $x,y\in C_n$, if $x\restriction k = y\restriction k$ then $M(A^x_k) = M(A^y_k)$ and therefore $B^x_n = B^y_n$, as both are definable using the same definition from $A^x_k$ in the model $M(A^x_k)$, and so $x\mathrel{E_n}y$ by definition. We conclude that $E_n$ extends $F_k$ on $C_n$.

Next, assume that $E_n$ extends $F_k$, $k<n$ on a comeager set. Then for any $\P_n$-generics $x,y$ over $V$, if $A^x_k = A^y_k$ then $B^x_n = B^y_n$, as calculated in $V(A^x_n), V(A^y_n)$ respectively. It follows that for any two $\R^k_n$-generics $R_1,R_2$ over $V(A_k)$, $B_n^{\rho^k_n[R_1]} = B_n^{\rho^k_n[R_2]}$, so $B_n$ can be defined in $V(A_k)$ as the unique set which is forced to be equal to $B_n^{\rho^k_n}$.

\end{proof}

\subsubsection{Concluding Lemma~\ref{lemma: main proof breakdown of E}}

It was convenient to use the equivalence relations $F^\ast_k$ before. For now let us return to the usual presentation and prove the equivalent:
\begin{lemma}
Let $E$ be an equivalence relation, classifiable by countable structures, which extends $F_n$ but does not extend $F_k$ on any comeager set, for $k<n$. Then there are equivalence relations $E_k$ for $k<n$, defined on comeager subsets of $X_k$, so that
    \begin{enumerate}
        \item $E_k$ extends $F_k$ on a comeager set;
        \item $E_{k+1}$ does not extend $F_k$ on any comeager set;
        \item  $E \subset E_k$, on a comeager set, for each $k<n$. That is, on a comeager set, if $x\mathrel{E}y$ then $x\restriction k \mathrel{E_k} y\restriction k$.
    \end{enumerate}
\end{lemma}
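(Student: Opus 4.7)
The plan is to use the pin-to-set framework to encode $E$ by a single hereditarily countable invariant $B_n$ living in the Monro model $V(A_n)$, and then to stratify this invariant along the tower $V\subseteq V(A_1)\subseteq\cdots\subseteq V(A_n)$. Since $E$ extends $F_n$ and is classifiable by countable structures, the canonical pair $(\Q_n,\sigma_n)$ is an $E$-pin in $V(A_n)$, so Lemma~\ref{lemma: pin to set} produces a set $B_n\in V(A_n)$, definable from $A_n$ over $V$, with $\Q_n\force\check{B}_n=B_{\sigma_n}$. By Lemma~\ref{lemma: Ek from set in Ak}, the equivalence relation $x\mapsto B_n(A_n^x)$ agrees with $E$ on a comeager set, so we may replace $E$ by this representative. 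The hypothesis that $E$ does not extend $F_k$ on any comeager set, combined with the ``moreover'' clause of Lemma~\ref{lemma: Ek from set in Ak} and Lemma~\ref{lemma: monro models subset of Mk is in Mkplus1} applied to $\{B_n\}$, translates to the statement $B_n\notin V(A_k)$ for every $k<n$.

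For each $1\le k<n$, I define
\[
B_k:=\mathrm{tc}(\{B_n\})\cap V(A_k).
\]
Since $B_k\subseteq V(A_k)$ is definable from $A_n$ over $V$, Lemma~\ref{lemma: monro models subset of Mk is in Mkplus1} places $B_k\in V(A_k)$ definable from $A_k$ over $V$, and Lemma~\ref{lemma: Ek from set in Ak} then produces the induced equivalence relation $E_k$ on a comeager subset of $X_k$, extending $F_k$; this establishes clause (1). For clause (2), the case $k+1=n$ is the hypothesis on $E$ itself; for $k+1<n$, one argues contrapositively: if $B_{k+1}\in V(A_k)$ definable from $A_k$, then by transitivity every element of $\mathrm{tc}(\{B_n\})$ in $V(A_{k+1})$ already lies in $V(A_k)$, and a downward induction on the von Neumann rank of elements in $\mathrm{tc}(\{B_n\})$, repeatedly invoking Lemma~\ref{lemma: monro models subset of Mk is in Mkplus1}, propagates this to force $B_n\in V(A_k)$, contradicting the hypothesis. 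The construction extends to $n=\omega$ level by level, using $V(A_\omega)$ in place of $V(A_n)$.

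The main obstacle is clause (3): given $B_n^x=B_n^y=:B^\ast$, one must show that $\mathrm{tc}(\{B^\ast\})\cap V(A_k^x)=\mathrm{tc}(\{B^\ast\})\cap V(A_k^y)$. The plan is to give an intrinsic characterization of $V(A_k)$-membership for an element $b\in\mathrm{tc}(\{B^\ast\})$, depending only on the pair $(b,B^\ast)$. Fix such $b$; by Lemma~\ref{lemma: monro models subset of Mk is in Mkplus1} applied to $\{b\}$, $b\in V(A_k^x)$ if and only if $b$ is definable from $A_k^x$ over $V$. Using the homogeneity Fact~\ref{fact: homogeneity Ak to An factor} of the reduced forcing $\R^k_n$ over $V(A_k^x)$, such definability can be reformulated as a statement that $b$ is canonically named across all $\R^k_n$-generic realizations of $B^\ast$, a condition which, by homogeneity, refers only to the pair $(b,B^\ast)$ rather than to the specific generic $x$. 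The same reformulation applies symmetrically over $V(A_k^y)$ and produces the same condition, giving the desired invariance. The hardest step is making this invariance rigorous in the absence of a natural isomorphism $V(A_k^x)\cong V(A_k^y)$ (which may genuinely fail when $A_k^x\neq A_k^y$); the expected route is an induction on the rank of $b$ within $\mathrm{tc}(\{B^\ast\})$, aligning the $V(A_\bullet)$-filtration with the Scott-rank stratification of the hereditarily countable set $B^\ast$, and promoting the inductive hypothesis one level at a time via Lemma~\ref{lemma: monro models subset of Mk is in Mkplus1}.
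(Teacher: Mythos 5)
Your high-level strategy matches the paper's: use Lemma~\ref{lemma: pin to set} to replace $E$ by a single invariant $B_n\in V(A_n)$ definable from $A_n$, translate the hypothesis into $B_n\notin V(A_k)$ via the ``moreover'' clause of Lemma~\ref{lemma: Ek from set in Ak}, and then stratify $B_n$ along the Monro tower $V\subset V(A_1)\subset\cdots\subset V(A_n)$. But the crux of this lemma is exactly \emph{how} to stratify, and there your definition diverges from the paper's in a way that creates genuine gaps. You set $B_k:=\mathrm{tc}(\{B_n\})\cap V(A_k)$, whereas the paper sets $B_k:=\mathrm{t.c.}(B_n)\cap\mathcal{P}^{\alpha_k}(\eta)$ for the least ordinal $\alpha_k$ at which this intersection leaves $V(A_{k-1})$. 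The paper's cutoff is a function of $B_n$ and a \emph{fixed ground-model ordinal} $\alpha_k$; it does not mention $A_k$, $x$, or $V(A_k)$. Yours does.

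This is precisely why clause (3) is free in the paper and is ``the main obstacle'' for you. Since $\alpha_k\le\alpha_{k+1}$, the paper has $B_k=B_{k+1}\cap\mathcal{P}^{\alpha_k}(\eta)$, so $B_n^x=B_n^y$ forces $B_k^x=B_k^y$ directly, level by level. With your definition, $B_k^x=\mathrm{tc}(\{B^\ast\})\cap V(A_k^x)$ depends visibly on $A_k^x$, and you must prove that the dependence washes out when $B_n^x=B_n^y=B^\ast$. Your sketch for this -- an ``intrinsic'' characterization of $V(A_k)$-membership for $b\in\mathrm{tc}(\{B^\ast\})$ via the homogeneity Fact~\ref{fact: homogeneity Ak to An factor} -- never actually produces a condition that refers only to $(b,B^\ast)$: the homogeneity argument lives over a fixed $V(A_k)$, and you acknowledge yourself that the absence of an isomorphism $V(A_k^x)\cong V(A_k^y)$ leaves the step unresolved. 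As it stands this is a gap, not merely a hard step.

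Your clause (2) argument has a second gap. To get a contradiction from $B_{k+1}\in V(A_k)$ you propose a ``downward induction on von Neumann rank, repeatedly invoking Lemma~\ref{lemma: monro models subset of Mk is in Mkplus1}.'' But that lemma requires the set in question to be \emph{definable from $A_n$ over $V$}. The intermediate elements $b\in\mathrm{tc}(\{B_n\})$ to which the induction would apply it are arbitrary members of a transitive closure; in $V(A_n)$ such $b$ are typically definable only from $A_n$ together with parameters from $\mathrm{tc}(A_n)$ that may lie strictly above $V(A_k)$, so the lemma does not apply to them. The paper again sidesteps this by design: by minimality of $\alpha_k$, the set $B_k$ is \emph{not} in $V(A_{k-1})$ by construction, so clause (2) holds without any induction.

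In short, you have the right machinery and the right ambient picture, but the specific choice $B_k=\mathrm{tc}(\{B_n\})\cap V(A_k)$ does not make the lemma's conclusions come out; the rank-cutoff $B_k=\mathrm{t.c.}(B_n)\cap\mathcal{P}^{\alpha_k}(\eta)$ is the idea your proof is missing.
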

Fix $E$ as in the statement of the lemma. Apply Lemma~\ref{lemma: pin to set} to the $E$-pin $(\Q_n,\sigma_n)$ in $V(A_n)$, and get a set $B\in V(A_n)$ as in Lemma~\ref{lemma: pin to set}. $B$ is definable from $(\Q_n,\sigma_n)$ and therefore definable from $A_n$. It follows from Lemma~\ref{lemma: Ek from set in Ak} that     $B\notin V(A_k)$ for $k<n$.

Fix ordinals $\eta$, $\beta$ so that $B\in \mathcal{P}^{\beta}(\eta)$. For $1\leq k < n$ define the ordinal $\alpha_k$ to be the least so that $\mathrm{t.c.}(B) \cap \mathcal{P}^{\alpha_k}(\eta)\notin V(A_{k-1})$, and let $B_k = \mathrm{t.c.}(B) \cap \mathcal{P}^{\alpha_k}(\eta)$. Since $B_k$ is a subset of $V(A_{k})$ which is definable from $A_n$, it follows from Lemma~\ref{lemma: monro models subset of Mk is in Mkplus1} that $B_k\in V(A_k)$ is definable from $A_k$ over $V$. Let $B_n = B$.

We now define equivalence relations $E_k$ from $B_k$, as in Lemma~\ref{lemma: Ek from set in Ak}, so that $E_k$ extends $F_k$ on a comeager subset of $X_k$, and does not extend $F_{k-1}$ on a comeager set.
Note also that for $k<n$, since $B_k$ is defined as the intersection of $B_{k+1}$ with $\mathcal{P}^{\alpha_k}(\eta)$, then $E_{k}$ extends $E_{k+1}$ on a comeager set. Moreover, $E_n$ and $E$ agree on a comeager set. This concludes the proof of the lemma.

\bibliographystyle{alpha}
\bibliography{bibliography}

\end{document}